\LetLtxMacro\orgvdots\vdots
\LetLtxMacro\orgddots\ddots
\DeclareRobustCommand\vdots{%
	\mathpalette\@vdots{}%
}
\newcommand*{\@vdots}[2]{%
	% #1: math style
	% #2: unused
	\sbox0{$#1\cdotp\cdotp\cdotp\m@th$}%
	\sbox2{$#1.\m@th$}%
	\vbox{%
		\dimen@=\wd0 %
		\advance\dimen@ -3\ht2 %
		\kern.5\dimen@
		% remove side bearings
		\dimen@=\wd2 %
		\advance\dimen@ -\ht2 %
		\dimen2=\wd0 %
		\advance\dimen2 -\dimen@
		\vbox to \dimen2{%
			\offinterlineskip
			\copy2 \vfill\copy2 \vfill\copy2 %
		}%
	}%
}
\DeclareRobustCommand\ddots{%
	\mathinner{%
		\mathpalette\@ddots{}%
		\mkern\thinmuskip
	}%
}
\newcommand*{\@ddots}[2]{%
	% #1: math style
	% #2: unused
	\sbox0{$#1\cdotp\cdotp\cdotp\m@th$}%
	\sbox2{$#1.\m@th$}%
	\vbox{%
		\dimen@=\wd0 %
		\advance\dimen@ -3\ht2 %
		\kern.5\dimen@
		% remove side bearings
		\dimen@=\wd2 %
		\advance\dimen@ -\ht2 %
		\dimen2=\wd0 %
		\advance\dimen2 -\dimen@
		\vbox to \dimen2{%
			\offinterlineskip
			\hbox{$#1\mathpunct{.}\m@th$}%
			\vfill
			\hbox{$#1\mathpunct{\kern\wd2}\mathpunct{.}\m@th$}%
			\vfill
			\hbox{$#1\mathpunct{\kern\wd2}\mathpunct{\kern\wd2}\mathpunct{.}\m@th$}%
		}%
	}%
}
\newtheorem{theorem}{Theorem}
\newtheorem{lemma}[theorem]{Lemma}
\newtheorem{claim}[theorem]{Claim}
\newtheorem{corollary}[theorem]{Corollary}
\newtheorem{proposition}[theorem]{Proposition}
\theoremstyle{definition}
\newtheorem{remark}[theorem]{Remark}
\newtheorem{example}[theorem]{Example}
\newcommand{\calE}{{\mathcal E}}
\newcommand{\calF}{{\mathcal F}}
\newcommand{\calL}{{\mathcal L}}
\newcommand{\calN}{{\mathcal N}}
\newcommand{\calP}{{\mathcal P}}
\newcommand{\calO}{{\mathcal O}}
\newcommand{\calX}{{\mathcal X}}
\newcommand{\bbC}{{\mathbb C}}
\newcommand{\bbF}{{\mathbb F}}
\newcommand{\bbP}{{\mathbb P}}
\newcommand{\uz}{{\underline{0}}}
\newcommand{\Base}{\operatorname{Base}}
\def\geq{\geqslant}
\def\leq{\leqslant}
\begin{document}
 
\title[Non--cyclic triple planes with branch curve of degree at most 10]
{Non--cyclic triple planes with branch curve of degree at most 10}
 
\author{Ciro Ciliberto}
\address{Dipartimento di Matematica, Universit\`a di Roma Tor Vergata, Via O. Raimondo 00173 Roma, Italia}
\email{cilibert@axp.mat.uniroma2.it}

\author{Rick Miranda}
\address{Department of Mathematics, Colorado State University, Fort Collins (CO), 80523,USA}
\email{rick.miranda@colostate.edu}
 
\subjclass{Primary 14E20, 14E22, 14J25; Secondary 14F05, 14J26}
 
\keywords{triple covers, branch curves}
 
 \begin{abstract} 
 	In this paper we classify normal non--cyclic triple covers of $\bbP^2$ 
 	with branch curve of degree at most 10.
 \end{abstract}
 
\maketitle

\tableofcontents

\section*{Introduction} 

Triple coverings of the plane (usually called \emph{triple planes}) 
are a classical object of study in algebraic geometry,
which began to be exposed in earnest in the first half of the 20-th century;
we refer the reader to the papers by
Bronowski \cite {B1,B2,B3}, Du Val \cite {Du1,Du2}, and Pompilj \cite {P0,P01,P1,P,P2}.
In 1985, the foundation of a general theory of triple covers in algebraic geometry 
was set up by the second author of the present paper \cite {M}, 
followed by S.-L. Tan in and Casnati and Ekedahl in \cite {CE}.
Since then multiple authors have written on both theory and examples
of triple covers, and in particular triple planes; see for example \cite {FPV, IPR}.

Cyclic triple covers are straightforward to analyse, and we do not treat them  in this paper.

In the present paper we deal with 
the classification of the non--cyclic triple covers of the plane 
whose branch curve has degree at most $10$. 
This topic has been considered already by P. du Val in \cite {Du1, Du2};
he proposes in the two cited papers 
to classify 
all triple planes with branch curve of degree at most $14$.
However Du Val implicitly makes some generality assumptions that we want to overcome;
for instance he assumes that the branch curve has only cusps as singularities. 
Moreover in his classification there are some missing cases 
(such as, for example, the triple plane with branch curve of degree $12$ 
briefly presented in our Example \ref {ex:ell2}) 
and his discussion of some cases is a bit too informal 
to be acceptable for modern standards of rigor. 
However Du Val's papers contain beautiful ideas and we hope to return to them 
to give a detailed classification of triple planes 
with branch curve of degree $12$ and $14$. 

We should remark that the triple planes that appear in our classification 
also appear in Du Val's papers  \cite {Du1,Du2} 
and some of them also appear in the cited papers \cite {FPV, IPR}, 
although in these two papers the authors make the standard hypothesis 
that the triple plane is smooth, 
whereas we only assume it is normal.

%As we said, this paper is devoted to the classification of non--cyclic triple planes with branch curve of degree $d\leq 10$. 
Our classification
of triple planes with branch curve degree at most $10$ 
is complete except for one case (see Section \ref {ssec:opla}), 
concerning the branch curve of degree $10$, 
in which there is a possibility that the triple cover $X$ of the plane 
has $p_g=0$, $q=1$ and either $X$ is properly elliptic 
or $X$ has Kodaira dimension $-\infty$. 
Unfortunately we do not know if these cases can really happen or not. 
It should be mentioned that Du Val asserts in \cite {Du1} that these cases do not occur,
but we cannot completely follow his arguments there.
%but his argument is rather obscure for us.

We now explain in some detail the contents of the paper. 
In Section \ref {sec:prob}, we give the basic definitions and fix notation. 
In Section \ref {sec:pomp} 
we consider triple planes that possess an irrational pencil of curves. 
Here we give, for the reader's convenience, 
the proof of a beautiful theorem of Pompilj (see Theorem \ref {thm:pomp}) 
that asserts that the irrational pencil is \emph{composed} with the triple plane map 
(see the definition in Section \ref {ssec:comp}) 
with some exceptions that are fully described. 
This result turns out to be useful in the rest of the paper. 

In Section \ref {sec:branch} we make some remarks on the branch curve of a triple plane. 
In particular we deal with some cases 
in which there are double components of the branch curve 
and we characterize the case in which the whole branch curve is double 
and the triple plane is not cyclic (see Proposition \ref {prop:simul}). 
It turns out however that if the triple plane is smooth and the branch curve is double, 
then the triple plane is cyclic (see Remark \ref {rem:lpi}).

In Section \ref {sec:4} we consider the case in which the branch curve has degree $4$. 
This is a well known case, treated also in \cite {P1}. 
It turns out that the triple plane is essentially 
the projection of a rational normal scroll  of degree $3$ in $\bbP^4$ 
from a line not intersecting it. 
In general the branch curve is a quartic with three cusps. 
However following Pompilj, we also examine the special case 
in which the branch curve is the union of a cuspidal cubic 
plus its unique flex tangent line.  
In Section \ref {ssec:rot} we examine this example 
from the viewpoint of the structure constants of the algebra,
in the spirit of \cite{M}. 

Section \ref {sec:6} is devoted to the study of the case 
in which the branch curve has degree $6$. 
In this case the triple plane is essentially the projection 
of a normal cubic surface $X$ in $\bbP^3$ to a plane from a point not on it. 
If $X$ is smooth and the projection is general, 
the branch curve is an irreducible sextic that has in general six ordinary cusps. 
However we also examine the special case in which the branch curve is reducible, 
consisting of two smooth cubics 
that intersect in three collinear points with intersection multiplicity $3$ at each of them. 
In Section \ref {ssec:iol} we examine this example 
from the viewpoint of the structure constants of the algebra. 

In Section \ref {sec:8} we deal with the branch curve of degree $8$. 
Here the irregularity of the triple cover is a priori $q\leq 2$. 
However we prove in Proposition \ref {prop:deg=8notq=1} 
that the case $q=1$ does not occur. 
The proof of this fact is rather delicate and takes several pages. 
In the case $q=0$ the triple plane is rational 
and is fully described in Proposition \ref {prop:plane}. 

Finally in Section \ref {sec:10} we consider the case of the branch curve of degree $10$. 
The cases that occur are essentially the following: 
the projection to a plane of a quartic K3 surface in $\bbP^3$ from a smooth point on it,
and four rational cases that are fully described in Proposition \ref {prop:plop2} 
(see also Remark \ref {rem:skip} and Example \ref {ex:gul}). 
As we said, there is the possibility that in this case 
the triple plane has $p_g=0$, $q=1$ 
and either $X$ is properly elliptic or $X$ has Kodaira dimension $-\infty$, 
but we do not know if these cases really occur or not. 

In the brief Section \ref {sec:12}
we give some examples of non--cyclic triple planes with branch curve of degree $d>10$. 
In the Appendix \ref {triplecoverformulae} we review 
how to compute the structure constants of the algebra
(in the spirit of \cite{M})
when the triple plane is defined by a monic cubic polynomial.\bigskip

{\bf Aknowledgements:} The first author is a member of GNSAGA of the Istituto Nazionale di Alta Matematica ``F. Severi''.

\section{Preliminaries and notation}\label{sec:prob}

\subsection{General facts} 
For us a \emph{ triple plane} 
will be the datum of an irreducible, normal, projective complex surface $X$ 
and a finite, degree $3$, morphism $\pi: X\longrightarrow \bbP^2$. 
We will say that the triple plane is \emph{ordinary} if it is not cyclic.
Often we will abbreviate this and say that $X$ is an ordinary triple plane (otp).

Given a $\pi: X\longrightarrow \bbP^2$, 
we consider the $2$--dimensional linear system 
$\calL$ of Cartier divisors that is the pull--back via $\pi$ 
of the linear system of the lines of $\bbP^2$. 
The linear system $\calL$ is base point free, 
the general curve $C\in \calL$ is smooth and irreducible, 
$C^2=3$ and for any curve $D$ on $X$ one has $C\cdot D>0$;
hence $C$ is ample.  
The linear series cut out by $\calL$ on the general curve $C\in \calL$ 
is a base point free $g^1_3$. 

Let $\phi: \tilde X\longrightarrow X$ be the minimal desingularization of $X$ 
and let $\tilde {\calL}$ be the pull back via $\phi$ of $\calL$. 
We will abuse notation and denote by $C$ also the general curve in $\tilde {\calL}$. The curve $C$ is big and nef on $\tilde X$. 
By the minimality assumption on $\phi$, 
for any $(-1)$--curve $\theta$ on $\tilde X$,
one has $C\cdot \theta>0$.

Given a triple plane $\pi: X\longrightarrow \bbP^2$, 
one has $\pi(\calO_X)=\calO_{\bbP^2}\oplus E$ with $E$ a rank 2 vector bundle, 
called the \emph{Tschirnhausen vector bundle} of the triple plane (see \cite {M}). 

We will denote by $B\subset \bbP^2$ the branch curve of the otp 
and by $R\subset X$ the ramification curve. 
The branch curve $B$ can be non--reduced, 
but, under our assumption that $X$ is normal, 
$B$ can have at most double components (over which there is total ramification);
it can even happen that all components of $B$ appear with multiplicity $2$ 
and the triple plane is not cyclic (see  Section \ref {sec:branch}).
The degree $d=2h$ of $B$ is even, 
and the (arithmetic) genus of the curves in $\calL$ is $g=h-2$. 
Hence $h\geq 2$ and therefore $d\geq 4$. 

Given two triple planes $\pi: X\longrightarrow \bbP^2$ and $\pi': X'\longrightarrow \bbP^2$ 
we will say that they are \emph{birationally equivalent} or \emph{Cremona equivalent} 
if there is a commutative diagram of the form
\begin{equation}\label{eq:loh}
%\begin{displaymath}
\xymatrix{
X' \ar[d]_{\pi'}\ar@{-->}[r] &
X\ar[d]^\pi\\
\bbP^2 \ar@{-->}[r]& \bbP^2   }
%\end{displaymath}
\end{equation}
such that $X'\dasharrow X$ and $\bbP^2\dasharrow \bbP^2$ are birational maps. 

Let $X$ be the plane blown up at $n$ proper or infinitely near points. 
We let $E_1,\ldots, E_n$ be the $(-1)$--cycles corresponding to the blown--up points 
and $H$ the pull back on $X$ of a general line in the plane. 
Any complete linear system on $X$ is of the form $|dH-\sum_{i=1}^n m_iE_i|$, 
where $d$ is called the \emph{degre} of the linear system 
and $m_1,\ldots, m_n$ are called the \emph{multiplicities}. 
We will often denote such a linear system with the notation $|d; m_1,\ldots, m_n|$ (exponential notation being used for repeated multiplicities).

\subsection{Triple cover data}\label{ssec:tcd} 
Let $Z$ be a smooth projective irreducible complex variety, 
and let $\pi: X\longrightarrow Z$ with $X$ normal, 
and  $\pi$ a flat finite, degree $3$ morphism. 

We recall from \cite [Sect. 1]{T} that a triple  $(\calN, s, t)$ 
is called a \emph{triple cover datum} on the variety $Z$ 
if $\calN$ is an invertible sheaf on $Z$ 
and $s$ and $t$ are global sections 
of $\calN^{\otimes 2}$ and of $\calN^{\otimes 3}$ respectively, 
with $t\neq 0$. 

A triple cover datum $(\calN, s, t)$ on $Z$ determines a triple cover $\pi: X\longrightarrow Z$, with $X$ normal, in the following way  (see \cite [Sect. 1.1] {T}).
Let $V (\calN)$ be the associated line bundle of $\calN$ with the vector bundle map $p: V (\calN)\longrightarrow Z$. Let $z$ be the global coordinate in the fibers of $V (\calN)$. 
Then $z$ is a global section of $p^*(\calN)$. 
Thus we obtain the polynomial section of $p^*(\calN^{\otimes 3})$ given by 
$$f(z):=z^3 +sz+t,$$
where $s$ and $t$ are viewed as sections of $p^*(\calN^{\otimes 2})$ and of $p^*(\calN^{\otimes 3})$. Then $f(z)=0$ defines a codimension 1  subscheme $S$ of $V (\calN)$. Let $X$ be the normalization of $S$.  Then the composition of the normalization map with the bundle projection $p$ defines a map $\pi: X\longrightarrow Z$ with $X$ normal, and  $\pi$ a flat finite, degree $3$ morphism. 

Conversely,  every triple cover $\pi: X\longrightarrow Z$ 
can be constructed by this method (see \cite [Sect. 7] {T}),
although the triple $(\calN, s, t)$ is not uniquely determined by $\pi$.

\section{Triple planes with an irrational pencil}\label{sec:pomp}

In this section we will recall an interesting theorem due to G. Pompilj (see \cite{P});
for the reader's convenience, 
we will give a proof that follows quite closely Pompilj's original proof. 

An irreducible, normal, projective surface $X$ 
is said to have an \emph{irrational pencil} of genus $\gamma>0$ 
if there is a dominant rational map $f: X\dasharrow \Gamma$, 
where $\Gamma$ is a smooth curve of genus $\gamma$ 
and the general fibre $F$ of $f$ is irreducible. 
We will denote by $p$ the geometric genus of the general fibre $F$ of $f$. 
Notice that $f: X\dasharrow \Gamma$ is a morphism if $X$ is smooth 
or it has only rational singularities. 

Pompilj's theorem (see Theorem \ref{thm:pomp} below) 
is a characterization of triple planes possessing an irrational pencil. 

\subsection{The exceptional examples}\label{ssec:exc}  
Let $\Gamma$ be an irreducible, smooth, projective curve of genus $1$, 
that we can realize as a smooth cubic curve in $\bbP^2$. 
Consider $\Gamma[2]$ the 2--fold symmetric product of $\Gamma$. 
The points of $\Gamma[2]$ are effective divisors $x+y$ of degree $2$ on $\Gamma$.

There is an obvious surjective morphism $f: \Gamma[2]\longrightarrow \Gamma$ 
such that $f(x+y)=x\oplus y$, where $\oplus$ is the addition on $\Gamma$ 
(which is a $1$--dimensional torus). 
The fibres of $f$ are smooth and rational, 
and so $\Gamma[2]$ has an irrational pencil;
it is a minimal elliptic ruled surface.

There is a finite, degree $3$ morphism 
$\pi: \Gamma[2]\longrightarrow \check\bbP^2 (\cong \bbP^2)$ , 
which realizes $\Gamma[2]$ as a triple plane, defined as follows. 
Consider $\Gamma$ as a smooth cubic in $\bbP^2$. 
Let $x+y\in \Gamma[2]$. 
Then there is a unique  line $r_{x+y}$ that cuts out on $\Gamma$ 
a divisor containing $x+y$. 
The map $\pi$ sends $x+y$ to $r_{x+y}\in \check \bbP^2$. 
It is clear that $\pi$ has degree $3$ 
and the branch curve of $\pi: \Gamma[2]\longrightarrow \bbP^2$ 
is the dual of $\Gamma$ which is a sextic curve with $9$ cusps as singularities. 
(These $9$ cusps correspond to the flex tangents of $\Gamma$.)
It turns out that the Tschirnhausen vector bundle of 
$\pi: \Gamma[2]\longrightarrow \bbP^2$ is $\Omega^1_{\bbP^2}$ 
(see \cite [p. 1158] {M}). 
The triple plane $\pi: \Gamma[2]\longrightarrow \bbP^2$ is ordinary 
and will be called the \emph{basic exceptional example}. 

Now consider a dominant rational map $\phi: \bbP^2 \dasharrow \bbP^2$. 
Let $\pi': X\longrightarrow \bbP^2$ be such that the following diagram is cartesian:\begin{equation*}\label{eq:ex}
%\begin{displaymath}
\xymatrix{
X \ar[d]_{\pi'}\ar@{-->}[r] &
\Gamma[2]\ar[d]^{\pi}\\
\bbP^2 \ar@{-->}[r]^{\phi}& \bbP^2   }
%\end{displaymath}
\end{equation*}
Then, by substituting $X$ with its normalization if necessary,
$\pi': X\longrightarrow \bbP^2$ is also an otp 
and $X$ has an irrational pencil of genus 1. 
%If $\deg(\phi)=m$, then the branch curve of $\pi': X\longrightarrow \bbP^2$ has degree at least $6m$.

We will call any otp of the type $\pi': X\longrightarrow \bbP^2$ as above 
an \emph{exceptional example}. 

\subsection{ Pencil composed with a triple plane map}\label{ssec:comp}   
Let $\pi: X\longrightarrow \bbP^2$ be a triple plane   
with a pencil $f: X\dasharrow \Gamma$ of genus $\gamma\geq 0$ 
of curves of genus $p$. 
We will say that the  pencil is \emph{composed} with the triple plane map 
if there is a commutative diagram
\begin{equation} \label{eq:comp}
\xymatrix{
X \ar[d]_{\pi}\ar@{-->}[r]^f &
\Gamma\ar[d]^\varphi\\
\bbP^2 \ar@{-->}[r]^h& \bbP^1   }
\end{equation}
where $\varphi: \Gamma\longrightarrow \bbP^1$ is a degree $3$ morphism, 
and $h: \bbP^2\dasharrow \bbP^1$ is a rational map 
determined by a fixed part free pencil $\calP$ 
of curves of degree $d$ and genus $p$. 
In this case the pull--back via $\pi$ of a general curve $D\in \calP$ 
consists of the union of three curves of the pencil $f: X\dasharrow \Gamma$.
Moreover the branch curve $B$ of $\pi: X\longrightarrow \bbP^2$ 
is contained in the pull--back via $h$ 
of the branch divisor of $\varphi: \Gamma\longrightarrow \bbP^1$;
hence it consists of a union of irreducible components 
contained in $2\gamma+4$ curves of $\calP$
and therefore it has degree at most $(2\gamma+4)d$. 
If $\calP$ is a pencil of rational curves, 
there are no multiple curves in $\calP$, 
so every fibre of $h$ over a branch point of $\varphi: \Gamma\longrightarrow \bbP^1$ contributes some component to the branch curve; 
hence in this case the degree of the branch curve is at least $2\gamma+4$.

\subsection{Pompilj's theorem}
 Pompilj's theorem mentioned above is the following:

\begin{theorem}\label{thm:pomp} 
Let $\pi: X\longrightarrow \bbP^2$ be a triple plane 
with an irrational pencil $f: X\dasharrow \Gamma$ of genus $\gamma$ 
of curves of genus $p$. 
Then either $\pi: X\longrightarrow \bbP^2$ is Cremona equivalent 
to an exceptional example 
or the irrational pencil $f: X\dasharrow \Gamma$ is composed with the triple plane map, hence the branch curve $B$ is contained in  $2\gamma+4$ curves of a pencil. 
\end{theorem}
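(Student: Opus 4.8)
The plan is to encode the interaction between the pencil $f$ and the degree--$3$ map $\pi$ into a single rational map to a symmetric product of $\Gamma$, and then to read off the dichotomy from the dimension of its image. Concretely, for a general point $q\in\bbP^2$ with $\pi^{-1}(q)=\{x_1,x_2,x_3\}$ I set $g(q):=f(x_1)+f(x_2)+f(x_3)\in\Gamma[3]$; after passing to the minimal resolution $\tilde X$, where $f$ becomes a morphism, this is a well--defined rational map $g\colon\bbP^2\dasharrow\Gamma[3]$. Composing $g$ with the Abel--Jacobi map $\Gamma[3]\to\operatorname{Pic}^3(\Gamma)$ yields a rational map from the rational surface $\bbP^2$ to a torsor under the abelian variety $\operatorname{Pic}^0(\Gamma)$ (here one uses $\gamma>0$); since such a map is necessarily constant, the class of $f(x_1)+f(x_2)+f(x_3)$ equals a fixed degree--$3$ class $\mathfrak d$, and hence $g$ factors through the complete linear system $|\mathfrak d|=\bbP^r\subseteq\Gamma[3]$. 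Moreover $g$ cannot be constant: otherwise $f(\tilde X)$ would lie in the finite support of a fixed divisor, contradicting the dominance of $f$ onto $\Gamma$. Thus $\dim(\operatorname{im} g)\in\{1,2\}$, and these two values will match exactly the two alternatives in the statement.

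Suppose first that $\dim(\operatorname{im}g)=1$. Since $\bbP^2$ carries no irrational pencil, $\operatorname{im}g$ is a rational curve; normalizing it and, via Stein factorization, arranging connected (hence general--irreducible) fibres, I obtain a fixed--part--free pencil $h\colon\bbP^2\dasharrow\bbP^1$ whose members $D_\delta=h^{-1}(\delta)$ constitute the pencil $\calP$ of the statement. I then show that $\sigma:=h\circ\pi\colon\tilde X\to\bbP^1$ factors through $f$. Indeed, for a general fibre $F_t=f^{-1}(t)$ and $x\in F_t$ the divisor $g(\pi(x))$ contains $t$ in its support, and only finitely many members of $\calP$ have this property for fixed general $t$; so $\sigma(F_t)$ is finite, and irreducibility of $F_t$ forces $\sigma|_{F_t}$ to be constant. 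Hence $\sigma=\varphi\circ f$ for a morphism $\varphi\colon\Gamma\to\bbP^1$. To compute its degree, note that the fibrewise degree $d(t):=\deg\bigl(\pi|_{F_t}\colon F_t\to\pi(F_t)\bigr)$ is locally constant, hence constant on the connected curve $\Gamma$; since $\pi^{-1}(D_\delta)=\bigcup_{t\in\varphi^{-1}(\delta)}F_t$ maps onto the irreducible curve $D_\delta$ with total degree $3$, one gets $(\deg\varphi)\cdot d=3$, and as $\gamma>0$ rules out $\deg\varphi=1$ we conclude $\deg\varphi=3$ and $d=1$. This is precisely the assertion that $f$ is composed with the triple plane map; the bound on $\deg(B)$ then follows from the discussion in Section~\ref{ssec:comp}.

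Suppose instead that $\dim(\operatorname{im}g)=2$. Then $r=\dim|\mathfrak d|\ge 2$, and Riemann--Roch together with Clifford's theorem force $\gamma=1$ and $|\mathfrak d|\cong\bbP^2$, so that $|\mathfrak d|$ embeds $\Gamma$ in the dual plane $|\mathfrak d|^{*}$ as a smooth cubic and reconstructs the basic exceptional example $\pi_0\colon\Gamma[2]\to|\mathfrak d|$ sending $x+y$ to the line spanned by $x$ and $y$. Here $g\colon\bbP^2\dasharrow|\mathfrak d|$ is dominant; setting $\phi:=g$ I form the normalized fibre product $X'=\bbP^2\times_{|\mathfrak d|}\Gamma[2]$, which by definition is an exceptional example over $\bbP^2$. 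Finally I exhibit a birational map $X\dasharrow X'$ over $\bbP^2$: over a general $q$ with $\pi^{-1}(q)=\{x_1,x_2,x_3\}$ and $f(x_i)=t_i$, the fixed class $t_1+t_2+t_3\in|\mathfrak d|$ means the $t_i$ are collinear, so the fibre of $\pi_0$ over $g(q)$ is $\{t_1+t_2,\,t_2+t_3,\,t_1+t_3\}$, and the assignment $x_i\mapsto\bigl(q,\,t_j+t_k\bigr)$ with $\{i,j,k\}=\{1,2,3\}$ is a fibrewise bijection. This produces the commutative diagram \eqref{eq:loh}, so $\pi$ is Cremona equivalent to an exceptional example.

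The conceptual core, and the cleanest step, is the Albanese argument collapsing $g$ into a single linear system; once this is in place the whole dichotomy is governed by $\dim(\operatorname{im}g)$. I expect the main obstacle to lie in the exceptional case: one must verify carefully that the fibre product $X'$ is, after normalization, genuinely an ordinary triple plane of the required type and that the fibrewise bijection above extends to an honest birational map fitting \eqref{eq:loh}, controlling the loci where $\pi$ ramifies, where the $t_i$ collide, and where $g$ and $\phi$ are undefined. A secondary delicate point is the degree computation $\deg\varphi=3$ in the composed case, where the constancy of $d(t)$ is exactly what excludes the a priori possible value $\deg\varphi=2$.
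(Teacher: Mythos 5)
Your proposal is correct in substance, and although its pivot --- the rational map $g\colon\bbP^2\dasharrow\Gamma[3]$ --- is exactly the map the paper introduces in the proof of Claim \ref{cl:three}, you promote it from a tool used in one sub-case to the organizing principle of the entire proof, which genuinely changes the route. The paper first runs a pointwise case analysis (cases (a), (b), (c), then (a1), (a2), (a3)), disposing of the mixed cases (b) and (a2) by an involution argument and degree parity (Claims \ref{cl:one} and \ref{cl:two}); your Abel--Jacobi constancy step (any rational map from the rational surface $\bbP^2$ to a torsor under an abelian variety is constant, so $g$ lands in a single linear system $|\mathfrak d|$) together with the divisibility relation $(\deg\varphi)\cdot d=3$ subsumes those exclusions, since a mixed fibrewise degree $d=2$ is incompatible with total degree $3$. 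In the composed branch you are in fact more careful than the paper, which dispatches case (a3) in one sentence: your Stein factorization of $g$, the constancy of $d(t)$, and the count $(\deg\varphi)\cdot d=3$ with $\deg\varphi=1$ excluded by $\gamma>0$ supply exactly the data required by the definition in Section \ref{ssec:comp}. The routes diverge most in the exceptional branch: the paper studies the family $\calF$ of images of the fibres, shows it is a smooth plane cubic spanning a net $\calP$, and splits into $m=1$ (homaloidal net, Cremona reduction to the net of lines, and an appeal to \cite[Prop.~(0.18)]{CCML} to identify $\tilde X$ birationally with $\Gamma[2]$, Claim \ref{cl:four}) versus $m>1$ (the map $\phi_\calP$ and a universal family, Claim \ref{cl:five}); you instead take $g\colon\bbP^2\dasharrow|\mathfrak d|\cong\bbP^2$ itself as the dominant map, form the normalized fibre product with the basic exceptional example, and exhibit the birational identification $x_i\mapsto(q,\,t_j+t_k)$ over the identity of the base, which handles both of the paper's sub-cases uniformly, fits diagram \eqref{eq:loh} directly, and needs no external citation. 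What you lose relative to the paper is its finer geometric output (the cubic family $\calF$, the sharper identification with the \emph{basic} example when the net is homaloidal, the dual sextic as branch curve); what you gain is a shorter, more self-contained dichotomy governed by the single invariant $\dim(\operatorname{im}g)$.

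One of the verifications you flag at the end does require an actual argument, but it closes within your own framework: in the branch $\dim(\operatorname{im}g)=2$ you must know that the $t_i$ are pairwise distinct for general $q$, or else the assignment $x_i\mapsto(q,\,t_j+t_k)$ fails to be injective (if $t_1=t_2$ then $t_1+t_3=t_2+t_3$). This follows from your Albanese step: if two of the $t_i$ collide generically, say $g(q)=2t+t'$, then the class of $t'$ is the fixed class $\mathfrak d$ minus twice the class of $t$, and a degree-one class on a curve of positive genus has at most one effective representative, so $t'$ is a function of $t$ and $g$ depends on one parameter only, forcing $\dim(\operatorname{im}g)=1$, a contradiction; the fully coincident case $g(q)=3t$ is likewise incompatible with constancy of $AJ\circ g$ unless $t$ is constant, which would make a single fibre dominate $\bbP^2$ under the finite map $\pi$. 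The remaining flagged points are routine: your map is generically injective into a fibre product whose total degree over $\bbP^2$ is $3$, so its image component already has degree $3$, which forces irreducibility of the fibre product and birationality of the map.
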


\begin{proof} 
Let $F$ be a general curve of the irrational pencil $f: X\dasharrow \Gamma$ 
and let $x\in F$ be a general point of $F$. 
Hence $x$ is a general point of $X$. 
Set  $\pi^{-1}(\pi(x))=\{x,y,z\}$, 
with $x\neq y\neq	 z\neq x$. 
A priori we have three possibilities:\\
\begin{inparaenum}
\item [(a)] $y$ and $z$ do not belong to $F$;\\
\item [(b)] only one among $y$ and $z$ belongs to $F$;\\
\item [(c)] $y$ and $z$ both belong to $F$.
\end{inparaenum}

\begin{claim}\label{cl:one} 
Cases (b) and (c) cannot happen. 
\end{claim}

\begin{proof}[Proof of Claim \ref {cl:one}] 
Suppose case (b) happens and suppose that $y\in F$. 
Then, since $x$ is a general point of $X$, 
we would have an involution $\iota: F\dasharrow F$ 
that maps the general point $x\in F$ to $y\in F$. 
Let $g: X\dasharrow Y$ be the quotient of $X$ by the involution $\iota$. 
Then we would have a commutative diagram of the form
$$
\xymatrix{
X \ar[d]_{\pi}\ar@{-->}[r]^g &
Y\ar@{-->}[d]^\varphi\\
\bbP^2 \ar[r]^{\rm id}& \bbP^2   }
$$
where $\varphi$ is the map that sends the pair $[x,y]$ to $\pi(x)=\pi(y)$. 
Since $\pi$ has degree $3$ and $g$ has degree $2$, this is not possible. 

Suppose that case (c) occurs. 
Then $\pi$ maps the general curve $F$ of the pencil 3:1 to a curve $F'$ of $\bbP^2$, 
and the curve $F'$ would move in a pencil 
in the plane parametrized by the curve $\Gamma$. 
Since $\bbP^2$ has no irrational pencils, $\Gamma$ would be rational, 
hence $\gamma=0$, a contradiction.
\end{proof}

So only case (a) can occur. There are still three cases to be considered:\\
\begin{inparaenum}
\item [(a1)] when $x$ moves on $F$, 
neither $y$ nor $z$ move on a curve of the pencil $f: X\dasharrow \Gamma$ 
different from $F$;\\
\item [(a2)] when $x$ moves on $F$, 
only one among $y$ and $z$ moves on a curve of the pencil different from $F$;\\
\item [(a3)] when $x$ moves on $F$, 
both $y$ and $z$ move on two distinct curves of the pencil different from $F$.
\end{inparaenum}

\begin{claim}\label{cl:two} Case (a2) cannot happen. \end{claim}

\begin{proof}[Proof of Claim \ref {cl:two}] 
The proof is similar to the proof in Claim \ref {cl:one} that case (b) does not occur, 
and can be left to the reader. \end{proof}

In case (a3) the irrational pencil $f: X\dasharrow \Gamma$ 
is composed with the triple plane map, 
so we are done in this case. It remains to discuss case (a1). 

\begin{claim}\label{cl:three} 
In case (a1) one has $\gamma=1$. 
\end{claim}

\begin{proof}[Proof of Claim \ref {cl:three}] 
Consider the map $\phi: \bbP^2\dasharrow \Gamma[3]$ 
that is defined in the following way. 
Let $x\in \bbP^2$ be a general point. 
Set $\pi^{-1}(x)=\{x_1,x_2,x_3\}$, with $x_1,x_2,x_3$ pairwise distinct. 
Then define $\phi(x):=f(x_1)+f(x_2)+f(x_3)\in \Gamma[3]$. 
Because of the assumption (a1), the map $\phi$ is generically finite, 
hence its image $G$ has dimension $2$, and it is rational, being unirational. 
But then $G$ is contained in a complete $g_3^r$, with $3\geq r\geq 2$. 
However, $r=3$ is not possible, because then $\Gamma$ would be rational. 
So the only possibility is $r=2$ and this yields that 
$\Gamma$ has genus $\gamma=1$, as claimed.
\end{proof}

Next we consider the 1--dimensional family $\calF$ 
of the images on $\bbP^2$ of the curves $F$ of the pencil $f: X\dasharrow \Gamma$. 
We denote by $d$ the degree of these curves 
so that $\calF\subset |\calO_{\bbP^2}(d)|$ 
and note that the curves in $\calF$ have geometric genus $p$. 
Note that $\calF$ is a curve of geometric genus $\gamma=1$ in $|\calO_{\bbP^2}(d)|$. 
Moreover we notice that given a general point $x\in \bbP^2$, 
there are exactly three curves of the family $\calF$ passing through $x$.  
This implies that $\calF$ has degree $3$ in $|\calO_{\bbP^2}(d)|$, 
so it is a smooth plane cubic. 
Let $\calP$ be the plane spanned by $\calF$ in $|\calO_{\bbP^2}(d)|$. 
So $\calP$ is a net of degree $d$ curves, that has no fixed components, 
because $\calF$ has no fixed components. 
Hence $\calP$ has finitely many base points (if any) 
and we can denote by $m$ the number of intersection points 
of two general curves in $\calP$ off the base points (counted with multiplicities). 

\begin{claim}\label{cl:four} 
If $m=1$ then $\pi: X\longrightarrow \bbP^2$ is Cremona equivalent 
to a basic exceptional example. 
\end{claim}

\begin{proof}[Proof of Claim \ref {cl:four}] 
If $m=1$, then $\calP$ is a homaloidal net of curves of genus $0$, 
so that $p=0$ and $X$ is an elliptic ruled surface. 
Up to a Cremona transformation, $\calP$ can be transformed to 
the net of lines $|\calO_{\bbP^2}(1)|\cong \check \bbP^2$ of $\mathbb P^2$ 
and $\calF$ is transformed to a smooth cubic in $\check \bbP^2$. 
Then the branch curve $B$ of the triple plane is the dual of this smooth cubic curve, 
i.e., it is a sextic curve with $9$ cusps as singularities.  
The tangent lines to $B$ are the images of the fibres of the irrational pencil of $X$. 
Let $\phi: \tilde X\longrightarrow X$ be as usual the minimal desingularization of $X$. 
If $r$ is a general tangent line to $B$, 
the pull--back of $r$ to $\tilde X$ is of the form $F+\Gamma$, 
where $F$ is the general fibre of the irrational pencil of $\tilde X$, 
and $\Gamma$ is a smooth elliptic curve. 
We have  $3=(\Gamma+F)^2=\Gamma^2+2$, so $\Gamma^2=1$. 
Then by applying \cite  [Prop. (0.18)] {CCML} 
we see that $\tilde X$ is birational to $\Gamma[2]$, and this proves the assertion.
\end{proof}

The following claim finishes the proof of Pompilj's theorem:

\begin{claim}\label{cl:five} 
If $m>1$ then $\pi: X\longrightarrow \bbP^2$ 
is Cremona equivalent to an exceptional example. 
\end{claim}

\begin{proof}[Proof of Claim \ref {cl:five}] 
Suppose $m>1$. 
Consider the degree $m$ rational map $\phi_\calP: \bbP^2\dasharrow \bbP^2$ 
defined by the net $\calP$.  
Then $\phi_\calP$ maps the curves in $\calP$ to  $|\calO_{\bbP^2}(1)|$ 
and the family $\calF$ is mapped to a family $\calF'$ of lines of $\bbP^2$, 
such that, given a general point $x$ of $\bbP^2$,
there are exactly three curves in $\calF'$ containing $x$. 
Consider the universal family $X'\longrightarrow \calF'$. 
It has an obvious map $\pi': X'\longrightarrow \bbP^2$ 
which  is clearly Cremona equivalent  
to the basic exceptional example. 
Moreover we have a commutative diagram 
\begin{displaymath}
\xymatrix{
X \ar[d]_{\pi}\ar[r] &
X'\ar[d]^{\pi'}\\
\bbP^2 \ar@{-->}[r]^{\phi_\calP}& \bbP^2   }
\end{displaymath}
and this implies the assertion. 
\end{proof}
As mentioned above, this claim finishes the proof of Pompilj's theorem.
\end{proof}

\begin{corollary}\label{cor:pomp} 
Let $\pi: X\longrightarrow \bbP^2$ be a triple plane  
such that $X$ has Kodaira dimension $-\infty$ and $X$ is of irregularity $q>0$. 
Then either the triple plane is Cremona equivalent to an exceptional example, 
or $\pi: X\longrightarrow \bbP^2$ is Cremona equivalent 
(as in diagram \eqref {eq:loh}) 
to a  triple plane $\pi': X'\longrightarrow \bbP^2$ 
such that the branch curve consists of the union of $2q+4$ lines in a pencil. 
In this case the degree of the branch curve of $\pi: X\longrightarrow \bbP^2$ 
is at least $2q+4$ and the equality occurs if and only if 
the branch curve of $\pi: X\longrightarrow \bbP^2$ consists of $2q+4$ lines in a pencil. 
\end{corollary}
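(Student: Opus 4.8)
The plan is to manufacture an irrational pencil on $X$ and then feed it into Pompilj's theorem. Since $X$ has Kodaira dimension $-\infty$ and irregularity $q>0$, the Enriques--Kodaira classification shows that $X$ is not rational (rational surfaces have $q=0$) and hence is birationally ruled over a smooth curve $\Gamma$ whose genus equals the irregularity, i.e.\ of genus $q$. The ruling provides an irrational pencil $f\colon X\dasharrow\Gamma$ of genus $\gamma=q$ whose general fibre is rational, so $p=0$; we are thus exactly in the hypotheses of Theorem \ref{thm:pomp}. Applying that theorem, either $\pi\colon X\to\bbP^2$ is Cremona equivalent to an exceptional example --- the first alternative in the statement --- or the pencil $f$ is composed with the triple plane map, and I assume the latter henceforth.

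In the composed case we are in the situation of Section \ref{ssec:comp}: a commutative square \eqref{eq:comp} with $\varphi\colon\Gamma\to\bbP^1$ a degree $3$ morphism and $h\colon\bbP^2\dasharrow\bbP^1$ the rational map attached to a fixed part free pencil $\calP$ of curves of degree $d$ and genus $p=0$, the branch curve $B$ being contained in the pull--back via $h$ of the branch divisor of $\varphi$. Riemann--Hurwitz for $\varphi$ gives $2q-2=3\cdot(-2)+\sum_Q(e_Q-1)$, whence $\sum_Q(e_Q-1)=2q+4$, so the discriminant divisor of $\varphi$ on $\bbP^1$ has degree $2q+4$. Because $\calP$ is a pencil of rational curves it has no multiple members, so every branch point of $\varphi$ contributes a component to $B$, a totally ramified point contributing a double component in accordance with its weight in the discriminant; hence $B$ equals the pull--back $h^*(\operatorname{disc}\varphi)$ and $\deg B=d(2q+4)$. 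In particular $\deg B\ge 2q+4$, as already observed in Section \ref{ssec:comp}.

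To reach the second alternative I would reduce $\calP$ to a pencil of lines. A fixed part free pencil of rational plane curves is Cremona equivalent to the pencil of lines through a point $O$: resolving the base points of $\calP$ produces a rational surface fibred over $\bbP^1$ with rational fibres, whose relatively minimal model is a Hirzebruch surface, and the resulting birational map to $\bbP^2$ carries $\calP$ to the lines through $O$. Base--changing the square \eqref{eq:comp} along this Cremona map and normalizing yields a triple plane $\pi'\colon X'\to\bbP^2$, Cremona equivalent to $\pi$ as in \eqref{eq:loh}, for which $h$ is replaced by the projection from $O$; its branch curve is then $(h')^*(\operatorname{disc}\varphi)$, a union of $2q+4$ lines (counted with multiplicity) through $O$, that is, $2q+4$ lines in a pencil, which is the second alternative. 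For the original map one has $\deg B=d(2q+4)\ge 2q+4$, with equality precisely when $d=1$, i.e.\ when $\calP$ is itself a pencil of lines and $B$ consists of $2q+4$ lines in a pencil; this gives the final clause.

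The main obstacle I expect is the Cremona reduction of the rational pencil $\calP$ to a pencil of lines, together with the verification that the base--changed and normalized $\pi'$ is a genuine normal triple plane whose branch curve is \emph{exactly} the asserted $2q+4$ lines. In particular one must upgrade the mere containment of Section \ref{ssec:comp} to the equality $B=h^*(\operatorname{disc}\varphi)$ and keep careful track of multiplicities --- the double lines produced by totally ramified points of $\varphi$ --- so that both the count $2q+4$ and the equality clause come out correctly.
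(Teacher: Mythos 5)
Your skeleton coincides with the paper's: from $\kappa(X)=-\infty$ and $q>0$ you produce the genus-$q$ pencil of rational curves, Pompilj's Theorem \ref{thm:pomp} gives the dichotomy, and in the composed case you reduce the rational pencil $\calP$ to a pencil of lines by a Cremona transformation (the paper simply cites \cite[Prop.~5.3.5]{C} here, where you sketch the Hirzebruch-surface argument; that part is acceptable). The genuine gap is your central claim that $B=h^*(\operatorname{disc}\varphi)$ and hence $\deg B=d(2q+4)$. Section \ref{ssec:comp} deliberately asserts only a containment, giving $\deg B\leq (2\gamma+4)d$, and this is not an omission you can ``upgrade'': when a member $D_P\in\calP$ over a branch point $P$ is reducible, nothing forces every irreducible component of $D_P$ into $B$. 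Concretely, if $\varphi^*P=2Q_1+Q_2$, then $\pi^*D_P=2f^*Q_1+f^*Q_2$, and over one component $W$ of $D_P$ all three sheets can come from reduced components of the unramified fibre $f^*Q_2$ (say one mapping $2{:}1$ to $W$ and one mapping $1{:}1$); then every divisorial ramification index of $\pi$ over $W$ is $1$, so $W\not\subseteq B$. Thus your asserted equality is unproven (it contradicts the paper's explicit ``at most $(2\gamma+4)d$''), and everything you hang on it --- in particular the deduction ``$\deg B=2q+4$ precisely when $d=1$'', which is your entire proof of the final equality clause --- is unsupported. You flagged this upgrade as the main obstacle, but the correct move is not to prove the equality (it may fail); the corollary must be argued without it.

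And indeed the paper does so. For the second alternative, after the Cremona reduction one has $d=1$, every fibre of the new $h$ over a branch point of $\varphi$ is an \emph{irreducible} line, so the two facts already recorded in Section \ref{ssec:comp} (the branch curve is contained in the pull-back of the branch divisor of $\varphi$, and every fibre over a branch point contributes some component, doubled over a point of total ramification) pin the new branch curve down to exactly $2q+4$ lines in a pencil --- no exact pull-back identity needed. For the final clause, the lower bound $\deg B\geq 2q+4$ for the original $\pi$ comes from each branch point of $\varphi$ contributing at least its discriminant weight (a component of degree at least one, counted twice at total ramification), and equality then forces each contribution to be exactly a line, respectively a double line, which is the configuration in the statement; your criterion ``$d=1$'' is not a valid substitute once the identity $B=h^*(\operatorname{disc}\varphi)$ is abandoned, since with reducible members of $\calP$ one could a priori have $\deg B=2q+4$ with $d>1$, and that case must be handled through the componentwise count, not through $d$.
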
 

\begin{proof} 
Since $X$ has Kodaira dimension $-\infty$ and has irregularity $q\geq 1$, 
then $X$ has an irrational pencil $f: X\dasharrow \Gamma$ of genus $q$ 
of curves of genus $0$. 

If the given triple plane is not an exceptional example, then, 
by Pompilj's Theorem \ref{thm:pomp}, 
the irrational pencil $f: X\dasharrow \Gamma$ is composed with the triple plane map. 
This implies that there is a diagram as \eqref {eq:comp} 
where $h$ is determined by a pencil of rational curves. 
On the other hand any pencil of rational curves on $\bbP^2$ 
is Cremona equivalent to a pencil of lines (see \cite [Prop. 5.3.5] {C}). 
So we have a diagram \eqref {eq:loh} 
such that the branch curve of $\pi': X'\longrightarrow \bbP^2$  
consists of the union of $2q+4$ lines in a pencil. 
The final assertion follows from the content of Section \ref {ssec:comp}.
\end{proof}

\section{Remarks on the branch curve of a triple plane}\label{sec:branch}

By the considerations of Section \ref {ssec:tcd}, 
we can think of a triple plane $\pi: X\longrightarrow \mathbb P^2$ 
as being given by an equation of the type
\begin{equation}\label{eq:slap}
f(x,y,t,z):=z^3+3az+b=0,
\end{equation}
where $a=a(x,y,t)$ and $b=b(x,y,t)$ are homogenous polynomials 
of degrees $2d$ and $3d$ respectively 
and $[x,y,t]$ are the homogenous coordinates in the plane. 
In this case the branch curve $B$ has equation 
$$
4a^3+b^2=0
$$
and has degree $6d$. 
We may and will assume that $B$ has no triple component. 
The triple plane is cyclic if and only if $a=0$.  

\begin{lemma}\label{lem:kop} 
In the above set--up, 
if $a$ and $b$ have a  common factor $c$ of positive degree 
then the non--reduced curve with equation $c^2=0$ 
appears as a part of the branch curve of the triple plane 
$\pi: X\longrightarrow \mathbb P^2$. 
In this case $X$ is singular. 
\end{lemma}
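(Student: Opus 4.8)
The plan is to prove the two assertions in turn: first that $c^2=0$ is contained in the branch curve, which is immediate, and then that $X$ is singular, which I would obtain from a local analysis of the total space over the curve $\{c=0\}$. Throughout I may assume $a\neq 0$, since the cover is ordinary.

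For the first assertion, write $a=ca'$ and $b=cb'$ and substitute into the equation $4a^3+b^2=0$ of $B$:
\[
4a^3+b^2=4c^3(a')^3+c^2(b')^2=c^2\bigl(4c(a')^3+(b')^2\bigr).
\]
Thus $c^2$ divides the polynomial defining $B$, so the non-reduced curve $c^2=0$ is a part of the branch curve.

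For the second assertion I would first reduce to $c$ irreducible, replacing $c$ by one of its irreducible factors, which still divides both $a$ and $b$. The key preliminary observation is that, since we assume $B$ has no triple component, $c$ divides $b$ exactly once: if $c^i\|a$ and $c^j\|b$ with $i,j\ge 1$, then $c$ appears in $4a^3+b^2$ to order $\min(3i,2j)$, and this is $\le 2$ only when $j=1$ (as $3i\ge 3$). Hence $c\nmid b'$, so $\{c=0\}$ and $\{b'=0\}$ share no component and, by B\'ezout, meet in at least one point (note $\deg b'=3d-\deg c\ge d\ge 1$, because $c$ divides $a\neq 0$). I then choose a point $p\in\{c=0\}\cap\{b'=0\}$ at which $c$ is smooth and the two curves meet transversally. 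Now I realise $X$ as the normalisation of the hypersurface $S:\ z^3+3az+b=0$ inside the total space $V(\calN)$, as in Section~\ref{ssec:tcd}. Over any point of $\{c=0\}$ one has $a=b=0$, so the fibre is $z^3=0$ and the only point of $S$ there is $z=0$. Working in local coordinates $(u,v)$ on $\bbP^2$ at $p$ with $c=u$, I would expand $f=z^3+3ua'z+ub'$ around $(u,v,z)=(0,0,0)$. Since $b'(p)=0$, a short computation shows that the quadratic part of $f$ is $u$ times a linear form $\ell$; transversality of the intersection makes $\ell$ depend non-trivially on $v$, so $\ell$ and $u$ are independent and the quadratic part has rank $2$. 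A further check shows that $f$ restricted to the residual corank-one direction vanishes to order exactly $3$, so $S$ has an isolated $A_2$ double point at $(p,0)$.

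The conclusion then follows because an $A_2$ point is a rational double point, in particular a normal singularity: as $S$ is a hypersurface (hence Cohen--Macaulay) with an isolated singularity at $(p,0)$, it is normal there, so the normalisation $X\to S$ is an isomorphism over a neighbourhood of $(p,0)$. Therefore $X$ itself carries an $A_2$ singularity and is singular, as claimed. I expect the main obstacle to be the local computation in the previous paragraph, specifically verifying that the residual term in the corank direction is genuinely of order $3$ (equivalently, that $\{c=0\}$ and $\{b'=0\}$ are transverse at $p$); this is exactly what makes the singularity isolated, hence normal, and therefore persistent under passage to the normalisation $X$. Guaranteeing the existence of such a good point $p$ is the only place where the hypothesis that $B$ has no triple component (forcing $c\nmid b'$) is really needed; at more degenerate intersection points the singularity is only worse, but the surface is still singular there.
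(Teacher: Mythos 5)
Your handling of the first assertion is exactly the paper's (the paper dismisses it as trivial; your factorization $4a^3+b^2=c^2\bigl(4c(a')^3+(b')^2\bigr)$ is the computation behind that word). For the second assertion the paper is far more economical: writing $a=c\alpha$, $b=c\beta$, it computes the four partial derivatives of $f=z^3+3az+b$ and observes that they all vanish at every point with $z=c=\beta=0$ (such points exist by the degree count you give, since $\deg\beta\geq d\geq 1$), and also at points of $z=c=0$ lying over singular points of $c=0$; it then concludes that $X$ is singular, tacitly identifying singular points of the hypersurface $S$ of \S\ref{ssec:tcd} with singular points of its normalization $X$. Your route is genuinely different and, where it works, sharper: you pin down an $A_2$ point of $S$ and use normality of isolated hypersurface singularities to see that the normalization map is an isomorphism there, thus honestly addressing the $S$-versus-$X$ subtlety that the paper passes over. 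Two side remarks: your claim that $c$ appears in $4a^3+b^2$ to order exactly $\min(3i,2j)$ can fail when $3i=2j$ (cancellation), but only the inequality ${\rm ord}\geq\min(3i,2j)$ is used, so the deduction $j=1$ stands; and your standing assumption $a\neq 0$ is indeed necessary (for $a=0$ the statement can fail, e.g.\ for the cyclic cover $z^3=-b$ with $b$ smooth, taking $c=b$; cf.\ Remark \ref{rem:lpi}), though the lemma is stated for arbitrary triple planes, so this should be flagged as an implicit hypothesis rather than derived from ordinariness.

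The genuine gap is the existence of your good point $p$: nothing guarantees a point of $\{c=0\}\cap\{b'=0\}$ at which $c=0$ is smooth and the two curves meet transversally. All intersection points may be tangential, or may be singular points of $c=0$; already for $c=x$ and $b'=y^2-xt$ the curves meet only at $[0:0:1]$, with contact of order two, and a Milnor-number computation shows the corresponding point of $S$ is an $A_5$, not an $A_2$. Your fallback sentence --- that at degenerate points ``the singularity is only worse, but the surface is still singular there'' --- establishes only that $S$ is singular, which by the very logic of your own proof is insufficient: a non-normal singular point of $S$ could a priori be resolved by the normalization $X\to S$. The repair is cheap and essentially reinstates the paper's computation: at \emph{every} point of $\{c=0\}\cap\{b'=0\}$ all four partials of $f$ vanish, so $(p,0)\in{\rm Sing}(S)$; and in the present set-up ${\rm Sing}(S)$ is finite, hence all these singularities are isolated and (since $S$ is a hypersurface, so Cohen--Macaulay) normal, giving $X\cong S$ locally and the desired singularity of $X$ at every intersection point, transverse or not. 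Finiteness of ${\rm Sing}(S)$ holds because $S$ is smooth over the generic point of each component of $B$: over a reduced component ramification is simple, while over a double component (which, by normality of $X$, divides both $a$ and $b$ with total ramification, as recalled in Section \ref{sec:prob}) the gradient of $f$ at a generic point equals $\beta\,\nabla c\neq 0$. With this substitution your argument becomes complete, and at general points it still yields the extra information (the explicit $A_2$) that the paper's proof does not provide.
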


\begin{proof} The first assertion is trivial. As for the second assertion, set
$$
a=c \alpha, \quad b=c \beta.
$$
We have
\begin{align*} \label{triplecoverstructureconstants}
	\frac {\partial f} {\partial z} &= 3z^2+3c\alpha\\
	\frac {\partial f} {\partial x} &= 3z \frac {\partial a} {\partial x}+\frac {\partial c} {\partial x}\beta+ c \frac {\partial \beta} {\partial x}\\
	\frac {\partial f} {\partial y} &= 3z \frac {\partial a} {\partial y}+\frac {\partial c} {\partial y}\beta+ c \frac {\partial \beta} {\partial y}	\\
	\frac {\partial f} {\partial t} &= 3z \frac {\partial a} {\partial t}+\frac {\partial c} {\partial t}\beta+ c \frac {\partial \beta} {\partial t}
\end{align*}
and we see that all derivatives vanish 
for $z=c=\beta = 0$, 
and also for all points (if any) of $z=c=0$ that are singular for $c=0$. 
\end{proof} 

Next we consider the case in which $a$ and $b$ have no non--trivial common factor, 
and ask when does it occur that all components of the branch curve $B$ 
appear in $B$ with multiplicity $2$. 
This is certainly the case if the triple cover is cyclic, 
because in this case $a=0$ and the branch curve is $b^2=0$. 
There are however other cases in which this can happen. First we make an example.

\begin{example}\label{ex:trip} 
Let us refer back to Section \ref {ssec:comp} 
and let $\pi: X\longrightarrow \bbP^2$ be a triple plane 
with a pencil $f: X\dasharrow \Gamma$ of genus $\gamma\geq 0$ 
of curves of genus $p$ that is composed with the triple plane map. 
Looking at \eqref {eq:comp}, 
suppose that the branch divisor of $\varphi: \Gamma \longrightarrow \mathbb P^1$ 
consists of $\gamma+2$ distinct points each counted with multiplicity $2$. 
Then the branch curve of the triple plane consists of $\gamma+2$ curves of a pencil 
each counted with multiplicity $2$. 

In general such a triple plane is not cyclic. 
Indeed, consider two non--zero homogenous polynomials $h,k$ of degree $d$, 
with no non--trivial common factor. 
Consider the pencil $\calP$ with equation $\lambda h^3+\mu k^3=0$, 
that consists of (some) triples of curves of the pencil with equation $\lambda h+\mu k=0$. Set $b:=2(h^3-k^3)=0$ and $g:=2(h^3+k^3)=0$. 
It is readily seen that in the pencil generated by $b^2=0$ and $g^2=0$ 
there is also the curve with equation $(hk)^3=0$. 
Now set $a:=4hk$. 
Then we have $4a^3+b^2=g^2$. 
So the triple plane with equation \eqref {eq:slap} 
has the branch curve with equation $g^2=0$ but is not cyclic, 
because $a\neq 0$. 
 
An explicit example of this type is the following. 
Consider the cubic surface defined by the equation
\[
z^3 -(L^2+M^2+LM)z - LM(L+M) = 0
\]
where $L$ and $M$ are homogeneous linear polynomials in $x,y,t$.
The projection to the $[x:y:t]$ plane exhibits this cubic as a non-cyclic triple plane.
The branch curve is calculated to be
\[
(L-M)^2(L+2M)^2(2L+M)^2 = 0
\]
which consists of three double lines in a pencil.
\end{example}

\begin{proposition}\label{prop:simul} 
Consider a triple plane $\pi: X\longrightarrow \mathbb P^2$ 
given by an equation as \eqref{eq:slap}, 
such that the homogeneous polynomials $a$ and $b$ are both non--zero 
(of degrees $2d$ and $3d$ respectively) 
have no non--trivial common factor 
and such that all components of the branch curve $B$ appear in $B$ with multiplicity $2$, 
so that one has
$$
4a^3+b^2=g^2.
$$
Then $a,b,g$ are as in Example \ref {ex:trip}, 
namely there are two non--zero homogenous polynomials $h,k$ of degree $d$, 
with no non--trivial common factor, 
such that   $b=2(h^3+k^3)$ and $g=2(h^3-k^3)$ and  $a=4hk$. 
 \end{proposition}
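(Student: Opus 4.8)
The plan is to turn the hypothesis $4a^3+b^2=g^2$ into a divisibility statement in the unique factorization domain $\bbC[x,y,t]$ and to read off the structure of $a,b,g$ from it. Concretely, I would rewrite $4a^3=g^2-b^2=(g-b)(g+b)$ and study the two factors $g-b$ and $g+b$, the whole argument hinging on the coprimality of $a$ and $b$. As a preliminary observation, neither factor can vanish: if, say, $g-b=0$, then $(g-b)(g+b)=0=4a^3$ would force $a=0$, contrary to the assumption $a\neq 0$. Likewise $g+b\neq 0$.

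The key step is to show that $c:=\gcd(g-b,g+b)$ is a constant. Since $c$ divides both factors, it divides their sum $2g$ and their difference $2b$; as $2$ is a unit this gives $c\mid b$. On the other hand $c^2\mid (g-b)(g+b)=4a^3$, so $c^2\mid a^3$ and hence every irreducible factor of $c$ divides $a$. If $c$ were non--constant, one of its irreducible factors would divide both $a$ and $b$, contradicting the hypothesis that $a$ and $b$ have no non--trivial common factor. Therefore $c$ is a unit, i.e.\ $g-b$ and $g+b$ are coprime.

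Now I would exploit that the product of the two coprime factors equals $4a^3$, a unit times a perfect cube. For each irreducible $p$ one has $v_p(g-b)+v_p(g+b)=v_p(4a^3)=3\,v_p(a)$, and by coprimality at most one summand on the left is nonzero; hence each of $v_p(g-b)$ and $v_p(g+b)$ is a multiple of $3$. Thus $g-b$ and $g+b$ are each a nonzero constant times a perfect cube. Since $a,b,g$ are homogeneous of degrees $2d,3d,3d$, the factors $g\mp b$ are homogeneous of degree $3d$, so I can write $g+b=\rho\,H^3$ and $g-b=\sigma\,K^3$ with $H,K$ homogeneous of degree $d$ and coprime (their coprimality being inherited from that of $g\pm b$); comparing with $4a^3=(g-b)(g+b)$ then forces $a$ to be a constant multiple of $HK$.

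Finally I would absorb the scalars: because $\bbC$ is algebraically closed, the constants $\rho,\sigma$ admit cube roots, so rescaling $H,K$ to suitable $h,k$ (still coprime, still of degree $d$) normalizes them, and solving $b=\tfrac12\big((g+b)-(g-b)\big)$, $g=\tfrac12\big((g+b)+(g-b)\big)$ while reading off $a=\text{const}\cdot hk$ yields the expressions for $a,b,g$ in the normalization of the statement; the identity $4a^3+b^2=g^2$ is then a direct verification, exactly as in Example \ref{ex:trip}. \textbf{Main obstacle:} the conceptual heart is the passage from $\gcd(a,b)=1$ to the coprimality of $g-b$ and $g+b$, after which everything is forced by unique factorization; the only genuinely fiddly point is the bookkeeping of the scalar constants and the factor $2$ needed to land on the precise normalization asserted in the Proposition.
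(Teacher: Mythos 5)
Your proof is correct and is essentially the paper's own argument in algebraic dress: the paper likewise factors $4a^3=g^2-b^2=(g+b)(g-b)$, deduces coprimality of the two factors from that of $a$ and $b$ (via the coprimality of $b$ and $g$), and extracts cubes by unique factorization, merely packaging the cube--extraction in the geometric language of the pencils $\calP$ and $\calP'$ (with $\calP$ the $g^1_2$ whose members are pairs of members of $\calP'$) rather than in terms of valuations at irreducible factors. The one caveat you flag---the scalar bookkeeping---is real but is shared with the paper itself: the constants as literally stated (e.g.\ $a=4hk$ together with $b,g=2(h^3\mp k^3)$) do not satisfy $4a^3+b^2=g^2$, and the argument actually yields $a=\sqrt[3]{4}\,hk$ (equivalently, $a=4hk$ with $b,g=8(h^3\mp k^3)$), so the conclusion holds exactly up to this harmless rescaling of $h$ and $k$.
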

 
 \begin{proof} 
Note that $b$ and $g$ have no non--trivial common factor, 
otherwise $a$ and $b$ would have a non--trivial common factor, 
contrary to the hypotheses. 
Moreover we have $4a^3=g^2-b^2$, 
which means that the curve $a^3=0$ 
belongs to the fixed component free pencil $\calP$ 
generated by the curves $b^2=0$ and $g^2=0$. 
Consider the fixed component free pencil $\calP'$ 
generated by the curves $b=0$ and $g=0$. 
Then the curves in $\calP$ consist of pairs of curves of $\calP'$, 
and precisely $\calP$ is the $g^1_2$ on $\calP'$ 
whose branch points are the curves $b=0$ and $g=0$. 
Since the curve $a^3=0$ belongs to $\calP$, 
we can write $a^3=4uv$, 
where $u=0$, $v=0$ are curves of $\calP'$ 
and $u,v$ have no non--trivial common factor. 
This implies that there are two non--zero homogenous polynomials $h,k$ of degree $d$, 
with no non--trivial common factor, 
such that $u=h^3, v=k^3$, so that $a=4hk$. 
Now we have $4uv=4(hk)^3=g^2-b^2=(g+b)(g-b)$, 
with $g+b,g-b$ having no non--trivial common factor. 
Since $u=0$, $v=0$, $g+b=0$ and $g-b=0$ are curves of $\calP'$, 
we may assume that $g+b=u=h^3$ and $g-b=v=k^3$, 
hence $g=2(h^3+k^3)$, $b=2(h^3-k^3)$;
this is then in the form of Example \ref{ex:trip} as desired. 
\end{proof}
 
\begin{remark}\label{rem:lpi} 
Consider a triple plane $\pi: X\longrightarrow \bbP^2$ as in Proposition \ref {prop:simul}. Then $X$ is singular.
Indeed we have $b=2(h^3+k^3)$ and $a=4hk$, 
with $h,k$ with no non-trivial common factor, 
and over the points defined by $h=k=0$ the surface $X$ is singular 
by \cite[Lemma 5.1, (a)]{M}. 

Therefore a triple plane $\pi: X\longrightarrow \bbP^2$ with $X$ smooth 
and such that each component of the branch curve $B$ appears in $B$ with multiplicity $2$, is necessarily cyclic. 
\end{remark}

\section{Branch curve of degree 4} \label{sec:4}

 \subsection{The general discussion}  
Consider an otp  $\pi: X\longrightarrow \bbP^2$ such that its branch curve has degree $4$. 
Then the general curve $C\in \calL$ is smooth and rational with $C^2=3$. 
This implies that $\dim(|C|)=4$ 
and $\varphi_{|C|}: X\longrightarrow \Sigma \subset \bbP^4$ 
embeds $X$ as a rational normal scroll $\Sigma\cong X$ of degree $3$ in $\bbP^4$.
Moreover $\calL\subset |C|$ and therefore $\pi: X\longrightarrow \bbP^2$  
can be identified with a projection of $\Sigma$ to $\bbP^2$ 
from a line $\ell\subset \bbP^4$ such that $\ell\cap \Sigma=\emptyset$. 
We have two possibilities: (a) $\Sigma$ is a cone; (b) $\Sigma$ is smooth. 

In the former case the branch curve $B$ of the triple plane 
 consists of four lines belonging to a pencil
(and some lines can occur with multiplicity $2$, see Example \ref {ex:trip}). 
In the latter case  we have the following result, already due to Pompilj in \cite {P1}:

\begin{proposition}\label{prop:pomp} 
If $X=\Sigma$ is smooth, then the quartic branch curve can either be irreducible, 
with three distinct cusps as singularities, 
or reducible in a cuspidal cubic curve plus its unique flex tangent line. 
\end{proposition}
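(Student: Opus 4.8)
The plan is to use the identification $X=\Sigma\cong \bbF_1$ (the only smooth cubic scroll $S(1,2)$ in $\bbP^4$) together with the realization of $\pi$ as the projection from a line $\ell$ with $\ell\cap\Sigma=\emptyset$, and then to read off the branch curve from the geometry of the rulings. First I would fix the standard basis of $\Pic(\bbF_1)$ given by the ruling $f$ and the directrix $C_0$ with $C_0^2=-1$, $f^2=0$, $C_0\cdot f=1$, and note that the hyperplane class is $C=C_0+2f$ (so that $C^2=3$, as it must be). Since $\Sigma$ and $\bbP^2$ are smooth and $\pi$ is finite of degree $3$, Riemann--Hurwitz gives the ramification class
\[
R=K_\Sigma-\pi^*K_{\bbP^2}=(-2C_0-3f)+3(C_0+2f)=C_0+3f,
\]
using $\pi^*K_{\bbP^2}=-3C=-3(C_0+2f)$. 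A direct computation then yields $R\cdot C=4$ and $p_a(R)=0$, so that $B=\pi(R)$ is a rational plane quartic; since the arithmetic genus of a quartic is $3$, the total $\delta$--invariant of $B$ equals $3$. This already pins down the numerology of both alleged cases.

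Next I would describe $B$ through the family of rulings. Each ruling $R_t$ is a line of $\bbP^4$ disjoint from $\ell$, hence projects isomorphically onto a line $\ell_t\subset\bbP^2$; as $t$ varies we obtain a one--parameter family $\calF=\{\ell_t\}$, i.e. a rational curve $\check\calF\subset\check\bbP^2$. Because $\pi$ has degree $3$ and the fibre over a general point lies on three distinct rulings, exactly three members of $\calF$ pass through a general point of $\bbP^2$, so $\check\calF$ has degree $3$; being the image of $\bbP^1$ it is an irreducible rational cubic, hence either nodal or cuspidal. The key geometric observation is that, since distinct rulings of $\bbF_1$ are pairwise disjoint, a fibre of $\pi$ can degenerate only when two of the three rulings through a point come together, so that $B$ is exactly the envelope of $\calF$, that is, the dual curve of $\check\calF$.

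Finally I would analyse the two possibilities by Plücker duality. If $\check\calF$ is a nodal cubic, its class is $6-2=4$ and its three flexes dualize to three cusps, while the absence of bitangents leaves no further singularities; thus $B$ is the irreducible three--cuspidal quartic. If instead $\check\calF$ is a cuspidal cubic, its dual is again a cuspidal cubic (of degree $3$), and the stationary tangent corresponding to the cusp of $\check\calF$ appears as an extra line: concretely, in this case the class $R=C_0+3f$ splits as $R=(C_0+2f)+f=C+f$, the component $C$ mapping birationally onto the cuspidal cubic and the whole ruling $f=R_{t_0}$ lying in the ramification and mapping onto the line $\ell_{t_0}$, which is precisely the flex tangent of that cuspidal cubic. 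This produces the reducible branch curve of the statement. \textbf{The main obstacle} I anticipate is making the reducible case fully rigorous: one must justify that the cuspidal degeneration of $\check\calF$ forces an entire ruling into the ramification, identify the resulting line with the flex tangent, and rule out further degenerations (for instance that $\check\calF$ cannot drop to a line, which is excluded by $\ell\cap\Sigma=\emptyset$ and the smoothness of $\Sigma$). As a safe cross--check I would also write $\Sigma$ and $\ell$ in explicit coordinates and compute $4a^3+b^2=0$ directly, the general and the special position of $\ell$ yielding the two cases.
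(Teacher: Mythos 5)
Your proposal is correct in outline but takes a genuinely different route from the paper's proof of Proposition \ref{prop:pomp}. The paper projects in two steps: first from a point of $\ell$ to $\bbP^3$, obtaining a cubic surface $\Sigma'$ with a double line, then from the image point $y$ of $\ell$, reading the ramification off the polar quadric of $y$; cuspidality of the singularities comes from the absence of proper secant lines through the centre of projection, the exclusion of a conic or of a second line component comes from class computations in $\Pic(\Sigma)$, and the one delicate configuration --- the line meeting the cubic at its cusp along the cuspidal tangent --- is excluded by the local normal form $B^2=AC$ of the discriminant. Your envelope argument instead rests on identifying $\Sigma$ with the universal family of the line family $\{\ell_t\}$ (each point of $\Sigma$ lies on a unique ruling, and each ruling, being disjoint from $\ell$, projects isomorphically), so that the fibre of $\pi$ over $p$ is the degree-$3$ divisor on $\bbP^1$ obtained by pulling back the pencil of lines through $p$ along $\bbP^1\to\check\calF\subset\check\bbP^2$; the two cases then correspond exactly to the nodal/cuspidal dichotomy for the rational cubic $\check\calF$. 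This is close in spirit to the paper's own subsection ``An alternate viewpoint,'' where the dual cubic, its node (giving the bitangent), and the type (d) degeneration are analyzed, though the paper's official proof is the projection one.

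The obstacle you flag does close cleanly in your framework, and you should make the following explicit. First, your phrase ``$B$ is exactly the envelope'' is imprecise in the cuspidal case: the branch locus is the set of $p$ whose fibre divisor on $\bbP^1$ is non-reduced, namely the dual curve of $\check\calF$ \emph{together with} the lines dual to the critical values of the parametrization, and only a cuspidal $\check\calF$ has such a critical value. Concretely, if $\check\calF$ has a cusp at $c=[\ell_{t_0}]$, the pencil dual to a general $p\in\ell_{t_0}$ meets $\check\calF$ at $c$ with multiplicity $2$, so the fibre divisor is $2t_0+t_1$; hence $\pi^*(\ell_{t_0})=2F_{t_0}+E$ and the whole ruling $F_{t_0}$ lies in the ramification, while by biduality the cusp of $\check\calF=B_0^\vee$ is precisely the tangent line at the unique flex of the dual cubic $B_0$, so $\ell_{t_0}$ is that flex tangent. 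Pleasantly, the configuration the paper must rule out via $B^2=AC$ never even arises on your route. Second, you must exclude $\check\calF$ being a line traversed $3{:}1$: if all rulings lay over a pencil of lines, then the pullback of a general member would contain three rulings, giving $\pi^*(\mathrm{line})\equiv 3f\neq C_0+2f$, a contradiction. Third, your $\delta$-invariant count via $R\cdot C=4$ and $p_a(R)=0$ tacitly assumes $R\to B$ is birational, i.e.\ that $B$ has no multiple components; this is automatic once the envelope identification is in place (the nodal case gives an irreducible dual quartic, the cuspidal case a reduced cubic plus a line), and the all-double alternative is excluded for smooth $X$ by Remark \ref{rem:lpi}. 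With these additions your argument is complete and arguably more conceptual in the reducible case; what the paper's projection method buys instead is uniformity with the degree-$6$ analysis of Proposition \ref{prop:sm}, which reuses the same polar-quadric and secant-line technique.
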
  

\begin{proof}
First of all recall that $\Sigma\cong \mathbb F_1$;
its Picard group is generated by a fibre $F$ 
of the structure map $\bbF_1\longrightarrow \bbP^1$ 
and by the section $E$ such that $E^2=-1$. 
The only lines on $\Sigma$ are the curves in the pencil $|F|$ and $E$. 
Moreover the hyperplane bundle $H$ on $\Sigma$ is such that $H\sim E+2F$.

As we know, the triple plane map $\pi: X\longrightarrow \bbP^2$  
can be identified with a projection of $\Sigma$ to $\bbP^2$ 
from a line $\ell\subset \bbP^4$ 
such that $\ell\cap \Sigma=\emptyset$. 
Let $x\in \ell$ be a point and first project $\Sigma$ down to $\bbP^3$ from $x$. 
The image of $\Sigma$ is a degree $3$ surface $\Sigma'$ 
that has a line $r$ of double points.
The image of $\ell$ via the projection is a point $y\in \bbP^3$ not lying on $\Sigma'$, 
and we have to project $\Sigma'$ down to $\bbP^2$ from $y$. 
Let $Q$ be the polar quadric of $\Sigma'$ with respect to $y$. 
Then $Q$ contains the line $r$ and further intersects $\Sigma'$ 
in a quartic curve $D$ whose projection from $y$ is the branch curve $B$ of the triple plane. 

Suppose first that the branch curve $B$ is irreducible.  
Then the curve $D$ is also irreducible 
and it is rational because it intersects the general line of the ruling of $\Sigma'$ in one point. If $p\in D$ is any point, 
then the line $\langle y,p\rangle$ has intersection multiplicity
at least $2$ with $\Sigma$ at $p$. 
This proves that there are no proper secant lines to $D$ passing through $y$. 
Hence the projection of $D$ from $y$ is a rational quartic curve 
with only cuspidal singularitities. 
On the other hand an irreducible quartic curve can have only simple cusps. 
This proves that $B$ has exactly three cusps. 

Suppose next that the branch curve $B$ is reducible. 
First we claim that $B$ cannot contain an irreducible conic. 
Suppose by contradiction that $B$ contains an irreducible conic $A$. 
Then the pull--back of $A$ on $\Sigma$ would belong to $|\calO_\Sigma(2)|$. 
On the other hand this pull--back would consist of 
an irreducible conic $A_0$ counted twice plus another irreducible conic $A_1$. 
But the irreducible conics on $\Sigma$ are all linearly equivalent 
and therefore we would find that $2A_0+A_1\sim 3A_0\in |\calO_\Sigma(2)|$, 
which is not possible. 

So, if $B$ is reducible, then $B$ contains a line $a$. 
We claim that $\pi^*(a)=E+2F_0$, with $F_0\in |F|$. 
In fact since $\pi$ is branched over $a$,
$\pi^*(a)\in |E+2F|$ must be of the form $\pi^*(a)=2A_0+A_1$, 
where $A_0,A_1$ are lines on $\Sigma$. 
It is clear that $A_0\neq E$ 
because $2E+F$ only moves in a pencil on $\Sigma$,
while $\pi^*(a)$ moves in a net at least.
%because $h^0(\Sigma, \calO_\Sigma(2F-E))=0$. 
So $A_0$ is a curve $F_0\in |F|$ and therefore $A_1=E$
since the self-intersection must be $3$.

This implies that $a$ is the unique line in $B$. 
Indeed $a$ is uniquely determined by being the image of $E$ 
under the projection $\pi: \Sigma\longrightarrow \bbP^2$. 
Thus $B$ is reducible in $a$ plus an irreducible cubic $Z$.

Accordingly, $D$ is reducible in $F_0'$, the image of $F_0$ on $\Sigma'$, 
plus an irreducible cubic curve $Z'$ (that projects to $Z$). 
Let $E'$ be the image of $E$ on $\Sigma'$. 
Then the plane joining $y$ with $E'$ is tangent to $\Sigma$ all along $F'_0$. 
Moreover $Z'$  intersects the lines of the ruling of $\Sigma'$ in one point, 
and is therefore rational and intersects the line $F_0'$ transversally at one point $z$. 
Notice that $Z'$ cannot be a plane section of $\Sigma'$. 
In fact, if this would be the case, then $Q$ would be reducible in two planes: 
the plane containing $Z'$ plus the plane joining $y$ with $E'$ 
(because this is tangent to $\Sigma$ all along $F'_0$). 
But then $Q$ would contain $y$ and therefore $y$ would be on $\Sigma$, 
which is impossible. So $Z'$ is a rational normal cubic.

As in the case in which $D$ is irreducible, 
there are no proper secant lines to $D$ passing through the centre of projection $y$. 
This implies that the projection $Z$ of $Z'$ is cuspidal. 
Moreover $a$ and $Z$ must intersect in only one point $\zeta$, 
i.e., the projection of the point $z$. 
To finish the proof we have to show that it cannot be the case 
that $\zeta$ is the cusp for $Z$ and that $a$ is the cuspidal tangent. 
Indeed, if this is the case, then in a chart centered at $\zeta$, 
$B$ would have equation of the form $y^3=yx^3$. 
On the other hand, it is well known that $D$ has an equation of the form $B^2=AC$ 
where $A,B,C$ are suitable functions (see \cite [p. 1136]{M}) 
and $y^3=yx^3$ is not of this type.

We conclude that $\zeta$ is a smooth point for $Z$
and $a$ meets $Z$ only there, and hence it is the flex tangent to $Z$ at $\zeta$.
\end{proof}

%%%
\begin{comment}
\color{red}
\begin{remark}\label{rem:cay} Let us go back to the proof of Proposition \ref {prop:pomp}. On $\Sigma$ we have the two incident lines $E$ and $F_0$. Note that there is a hyperplane $\Pi$ in $\bbP^4$ that is tangent to $\Sigma$ along $F_0$, i.e., its intersection with $\Sigma$ is $E+2F_0$. In the projection $\pi: \Sigma\longrightarrow \bbP^2$ from $\ell$, $E$ and $F_0$ are mapped to the same line $a$ and $\pi^*(a)=E+2F_0$. This means that $\ell\subset \Pi$. This is the particularity that leads the branch curve to split off in a cuspidal cubic curve plus its tangent line in its only flex.

Note that inside  $\Pi$ we have the plane $\langle E, F_0\rangle$ and the line $\ell$ intersects this plane transversely in one point not on $\Sigma$, that we can take as the point $x$ in the proof of Proposition \ref {prop:pomp}. Then the pull back to $\Sigma$ of the line $r$ of double points of $\Sigma'$ is $E+F_0$ and $\Sigma'$ is a so--called \emph{Cayley cubic scroll}. 
\end{remark}
\color{black}
\end{comment}
%%%

When the scroll $\Sigma$ is smooth,
the general projection to a plane has an irreducible quartic (with three cusps) as its branch curve.  
To finish this subsection, we prove that the other case
(of a reducible branch curve)
can in fact occur:

\begin{proposition}\label{prop:true} If $X=\Sigma\subset \bbP^4$ is smooth, there are suitable triple plane projections $\pi: X\longrightarrow \bbP^2$ such that the branch curve consists of a cuspidal cubic curve plus the tangent to its only flex.
\end{proposition}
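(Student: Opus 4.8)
The plan is to produce, by an explicit geometric construction, a single projection centre $\ell$ for which the branch curve $B$ is reducible. By Proposition \ref{prop:pomp} a reducible branch curve is necessarily a cuspidal cubic plus its flex tangent, so exhibiting one such $\ell$ already proves the statement. Throughout I identify $\Sigma\cong\bbF_1\subset\bbP^4$ embedded by $H\sim E+2F$, with $E^2=-1$, $F^2=0$, $E\cdot F=1$ and $H^2=3$, as in the proof of Proposition \ref{prop:pomp}. The idea is to force one fibre $F_0\in|F|$ to be collapsed, under the projection, onto a line that lies in the branch locus.

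First I would fix a fibre $F_0\in|F|$ and consider the effective divisor $E+2F_0\sim H$. Since it lies in the hyperplane system $|H|$ of $\Sigma\subset\bbP^4$, there is a hyperplane $\Pi\cong\bbP^3$ with $\Pi\cap\Sigma=E+2F_0$ as divisors; as $F_0$ occurs with multiplicity $2$, the hyperplane $\Pi$ is tangent to $\Sigma$ all along $F_0$, that is $T_p\Sigma\subset\Pi$ for every $p\in F_0$. Set--theoretically $\Pi\cap\Sigma=E\cup F_0$, two lines meeting at $P_0=E\cap F_0$ and spanning a plane $W=\langle E,F_0\rangle\subset\Pi$.

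Next I would choose the centre $\ell\subset\Pi$. A general line $\ell\subset\Pi$ not contained in $W$ meets $W$ in one point, which for general $\ell$ lies off $E\cup F_0$; hence $\ell\cap\Sigma=\ell\cap(E\cup F_0)=\emptyset$, so $\ell$ is an admissible centre and the projection $\pi:\Sigma\longrightarrow\bbP^2$ from $\ell$ is a genuine triple plane. Because $\ell$ lies in the hyperplane $\Pi$, the projection collapses $\Pi$ to a line $a\subset\bbP^2$. I claim $a\subseteq B$, which makes $B$ reducible. Regarding $\bbP^2$ as the space of planes through $\ell$, the fibre $\pi^{-1}(q)$ is $\Sigma\cap P_q$, where $P_q\supset\ell$ is the plane corresponding to $q$; and $q\in a$ precisely when $P_q\subset\Pi$. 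For such $q$ the plane $P_q$ meets $\Sigma$ inside $\Pi\cap\Sigma=E+2F_0$, namely in a transverse point $p_E\in E$ and a point $p_{F_0}\in F_0$ counted with multiplicity $2$.

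The crux, and the step I expect to be the main obstacle, is to verify that $p_{F_0}$ is a genuine ramification point rather than two distinct sheets of the cover: the bare degree count only yields a scheme--theoretic double point. Here the tangency of $\Pi$ along $F_0$ is decisive. Since $T_{p_{F_0}}\Sigma\subset\Pi$ and both $P_q$ and $T_{p_{F_0}}\Sigma$ are $2$--planes inside $\Pi\cong\bbP^3$, they meet in at least a line through $p_{F_0}$; hence $P_q$ is tangent to $\Sigma$ at $p_{F_0}$ and the two sheets there truly come together. Thus every $q\in a$ is a branch point, so $a\subseteq B$, the branch curve is reducible, and Proposition \ref{prop:pomp} identifies it as a cuspidal cubic plus its flex tangent, completing the argument.
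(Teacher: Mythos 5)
Your proof is correct, and its skeleton coincides with the paper's: both reduce, via Proposition \ref{prop:pomp}, to exhibiting one projection whose branch curve contains a line, and both obtain it by placing the centre $\ell$ inside the hyperplane $\Pi$ cutting $\Sigma$ in the divisor $E+2F_0$, i.e.\ tangent to $\Sigma$ along the ruling $F_0$. (The paper reaches the same centre in two steps: it first projects from a general point $x\in\langle E,F_0\rangle$ onto a Cayley cubic scroll $\Sigma'\subset\bbP^3$, then from a general point $q$ of the plane $\alpha$, the image of $\Pi$; the resulting centre is exactly a general line of $\Pi$, as in your construction.) Where you genuinely differ is in the verification that the collapsed line $a=\pi(\Pi)$ lies in $B$: the paper does this computationally, choosing coordinates adapted to the double line $r$ of $\Sigma'$ and showing ${\bf b_2}=x{\bf c_1}$, ${\bf b_3}=x^2{\bf d_1}$, so that the discriminant factors as $x^3(4{\bf c_1}^3+x{\bf d_1}^2)$ --- a computation that simultaneously re-exhibits the residual cuspidal cubic and its flex tangent; you instead argue synthetically that for each plane $P_q\subset\Pi$ through $\ell$, the two $2$-planes $P_q$ and $T_p\Sigma$ (with $p=P_q\cap F_0$) lie in $\Pi\cong\bbP^3$ and both contain $p$, hence meet in a line through $p$, forcing the fibre to be non-reduced there. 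That argument is valid, and one small remark: your worry about ``two distinct sheets'' is unfounded, since for a finite cover a non-reduced point of the fibre $\Sigma\cap P_q=P_q\cap(E+2F_0)$ is by itself equivalent to $q$ being a branch point, and the divisorial computation (using $P_q\subset\Pi$ and transversality of $P_q$ to $F_0$) already produces the length-two point, so the tangent-space check, while correct, is not strictly needed. The trade-off between the two routes: yours is shorter, coordinate-free, and makes the role of the tangency of $\Pi$ along $F_0$ transparent, but leans entirely on the dichotomy of Proposition \ref{prop:pomp} for the final identification of $B$; the paper's computation independently displays $B$ as the line plus the cuspidal cubic $4{\bf c_1}^3+x{\bf d_1}^2=0$ with the line as its flex tangent, which both double-checks that case of Proposition \ref{prop:pomp} and ties the construction to Pompilj's classical treatment.
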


\begin{proof} 
Taking into account Proposition \ref {prop:pomp}, 
it suffices to prove that there are triple plane projections 
such that the branch curve contains a line. 
To see this, we argue as follows. 
The surface $\Sigma$ is a scroll, that has the line directrix $E$. 
Let $F_0$ be a line of the ruling and consider the plane $\langle E,F_0 \rangle$. 
Let $x$ be a general point of this plane 
and project $\Sigma$ down to $\bbP^3$ from $x$. 
The image of the projection is a cubic surface $\Sigma'$ 
with a line $r$ of double points 
whose pre--image on $\Sigma$ is the union of $E$ and $F_0$. 
This surface $\Sigma'$ is a so--called \emph{Cayley cubic scroll}. 
Notice that there is a $3$--dimensional linear space $\Pi$ 
which is tangent to $\Sigma$ along $F_0$ 
and contains the plane $\langle E,F_0 \rangle$, so it contains $x$. 
The projection of $\Pi$ from $x$ is a plane $\alpha$, 
whose intersection with $\Sigma'$ is the line $r$ counted with multiplicity $3$, 
i.e., $\alpha$ is tangent, along $r$, 
to one of the two branches of $\Sigma'$ intersecting along $r$. 
Let us take $q$ a general point of $\alpha$. 
It turns out that the projection from $q$ 
determines a triple plane map $\pi: X=\Sigma\longrightarrow \bbP^2$ 
whose branch curve consists of a cuspidal cubic curve 
plus its unique flexed tangent. 
This is clear from the proof of Proposition \ref {prop:pomp} 
(because the branch curve contains the line $a$, the projection of $r$) 
and it has been proved in all details in \cite [\S 36]{P1}. 
We will sketch the proof here for the reader's convenience. 

We can choose homogeneous coordinates $[x,y,z,t]$ in $\bbP^3$ 
so that $q$ is the point $[0,0,1,0]$. 
Then $\Sigma'$ has equation of the form
\begin{equation}\label{eq:trip}
z^3+{\bf a_1} z^2+{\bf a_2} z+ {\bf a_3}=0
\end{equation}
where $\bf a_i$ is a homogeneous polynomial of degree $i$ in $x,y,t$, 
for $1\leq i\leq 3$. 
By making the projective Tschirnhausen transformation 
sending $(x,y,z,t)$ to $(x,y,z-{\bf a_1}/3,t)$, 
we transform the equation \eqref {eq:trip} into an equation of the form
\begin{equation}\label{eq:tripo}
z^3+3{\bf b_2} z+ {\bf b_3}=0
\end{equation}
where $\bf b_i$ is a homogeneous polynomial of degree $i$ in $x,y,t$, 
for $2\leq i\leq 3$. 
To obtain the branch curve of the triple plane,
we consider the discriminant of this equation, 
i.e., $4{\bf b_2}^3+{\bf b_3}^2$. 
One has $4{\bf b_2}^3+{\bf b_3}^2={\bf r}^2 {\bf D}$, 
where ${\bf r}$ is a linear factor, 
whose zero set is the projection on the plane $z=0$ 
of the double line $r$ of $\Sigma'$, 
and ${\bf D}=0$ is the equation of the branch curve, that is in fact a quartic. 
We can actually suppose, up to a change of coordinates, 
that $r$ is the line $x=z=0$. 
Then setting $x=0$ into \eqref {eq:tripo}, 
we must have $z^3=0$, 
which means that $x$ has to divide both $\bf b_2$ and $\bf b_3$. 
Moreover if we put $z=0$ into \eqref {eq:tripo} 
we have that the resulting polynomial, namely ${\bf b_3}$,  
has to be divisible by $x^2$. 
So we have ${\bf b_2}=x{\bf c_1}$ and ${\bf b_3}=x^2{\bf d_1}$, 
with ${\bf c_1}, {\bf d_1}$ suitable linear forms. Hence 
$$
4{\bf b_2}^3+{\bf b_3}^2=4x^3{\bf c_1}^3+x^4{\bf d_1}^2=x^3(4{\bf c_1}^3+x{\bf d_1}^2)
$$
from which we see that ${\bf D}=x(4{\bf c_1}^3+x{\bf d_1}^2)$ 
is divisible by $x=0$; 
hence the branch curve consists of the line $r$ 
and of the cuspidal cubic with equation $4{\bf c_1}^3+x{\bf d_1}^2=0$, 
such that $x=0$ intersects it with multiplicity $3$ in the flex 
defined by $x={\bf c_1}=0$. 
\end{proof}

\begin{remark}\label{rem:all} 
We notice that all plane cuspidal cubics are projective equivalent 
and therefore also the unions of such a cubic with the tangent line in its only flex 
are projectively equivalent. 
Therefore any union of a cuspidal cubic curve 
 plus its flex tangent line 
is the branch curve of an otp that arises from a special projection of a smooth rational normal scroll $\Sigma\subset \bbP^4$. 
\end{remark}

In either case, the Tschirnhausen vector bundle of $\pi: X\longrightarrow \bbP^2$ 
is $\calO_{\bbP^2}(-1)\oplus \calO_{\bbP^2}(-1)$ 
(see \cite [Table 10.5]{M}).

\subsection{An alternate viewpoint}

Let $\bbF_1$ be the abstract relatively minimal rational ruled surface
with a negative section $E$, with $E^2 = -1$, and fiber class $F$.
It is the blowup of $\bbP^2$ at one point $\uz$
(the exceptional divisor is $E$),
and the class of a general line in the plane lifts to a section $S$ of the ruling, with $S^2=1$;
we note that $S \sim E+F$.

The embedding of $\bbF_1$ into $\bbP^4$ as the surface $\Sigma$
is given by the complete linear system $|E+2F| = |2S-E|$,
which corresponds to the linear system of conics through the point $\uz$.
This system has dimension $4$ and self-intersection $3$.

The map $\pi:\bbF_1 \to \bbP^2$ is then given by 
a base point free linear system $\calL \subset |E+2F|$ of dimension two, 
and corresponds to a net of conics in $\bbP^2$ all passing through the point $\uz$.
Formally, the map $\pi$ associates to any point $x\in\bbF_1$
the pencil of conics in $\calL$ that contain $x$,
and therefore maps $\bbF_1$ to $\check \calL$:
\[
\pi(x) = \{C \in \calL \;|\; x \in C\} \in \check \calL.
\]
If $P \in \check \calL$ is a pencil of conics, then
\[
\pi^{-1}(P) = \{x \in \bbF_1 \;|\; x\in C, \;\forall C \in P\} = \Base(P)
\]
namely the set of base points of $P$ (off $\uz$), which is the intersection of any two members of $P$  (off $\uz$), and therefore has length $3$: this is the triple cover of course.
(This set of base points is finite for every $P$, since otherwise
the set of base points would be a curve for this pencil,
and hence there would be base points for the overall system $\calL$.)

The elements of the complete linear system $|E+2F|$ break into four types:
\begin{itemize}
\item[(a)] Smooth members, corresponding to a smooth conic in $\bbP^2$ through $\uz$.
\item[(b)] A reducible curve of the form $S+F$, 
where $S$ is a smooth section with $S^2=1$ and $F$ is a fiber.  
This corresponds to a pair of distinct lines in $\bbP^2$, one of which passes through $\uz$.
\item[(c)] A reducible curve of the form $E+F_1+F_2$, 
consisting of the negative section $E$ plus two distinct fibers.  
This corresponds to a pair of distinct lines through $\uz$.
\item[(d)] A reducible curve of the form $E+2F$, 
consisting of the negative section $E$ plus the double of a fiber $F$.  
This corresponds to a double line through $\uz$.
\end{itemize}

\begin{lemma}
These four types are represented by four loci in the complete linear system.
These have the following dimensions and degrees.
\begin{itemize}
\item[(a)] This is the general member of $|E+2F|$, and these members form an open set in $|E+2F|$.
\item[(b)] The closure of the set of these members has dimension $3$; it is a hypersurface in $|E+2F|$ of degree $3$.  Its closure is the complement of the locus of members of type (a).
(This is the dual to the embedded surface $\Sigma$.)
\item[(c)] The closure of the set of these members has dimension $2$; it is a linear subspace of $|E+2F|$; it has degree $1$.
\item[(d)] The set of these members is a smooth conic in the plane of the (c) members.
It has dimension $1$ and degree $2$.
\end{itemize}
\end{lemma}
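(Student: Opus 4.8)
The plan is to make the identification of $|E+2F|$ with the $\bbP^4$ of plane conics through $\uz$ completely explicit and then to read off each of the four loci as an algebraic condition on the coefficients of the conic. Fix homogeneous coordinates $[x,y,t]$ on $\bbP^2$ with $\uz=[0,0,1]$. A conic passing through $\uz$ has no $t^2$-term, so it can be written as
\[
Q=\lambda x^2+\mu y^2+\nu\,xy+\rho\,xt+\sigma\,yt,
\]
and $[\lambda:\mu:\nu:\rho:\sigma]$ are homogeneous coordinates on $|E+2F|\cong\bbP^4$. Under the correspondence between members of $|E+2F|$ and plane conics through $\uz$ (smooth conic for (a); a pair of distinct lines exactly one of which passes through $\uz$ for (b); a pair of distinct lines both through $\uz$ for (c); a double line through $\uz$ for (d)), the four assertions become statements about $\lambda,\mu,\nu,\rho,\sigma$, which I would verify one type at a time, using that a smooth conic through $\uz$ has smooth strict transform while each singular conic gives a singular member, so that singular members correspond exactly to singular conics.

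For (a), smoothness is an open condition: a member is singular precisely when the symmetric $3\times 3$ matrix of $Q$ (with vanishing $t^2$-entry) is degenerate. Expanding that determinant yields, up to a nonzero scalar, the cubic
\[
\Delta:=\lambda\sigma^2-\nu\rho\sigma+\mu\rho^2=0 .
\]
This is the locus of all singular members, so its complement is the locus (a), which is therefore open and dense. Regarded as a binary quadratic form in $(\rho,\sigma)$ with coefficients $\mu,-\nu,\lambda$, the polynomial $\Delta$ has discriminant $\nu^2-4\lambda\mu$, which is not a square in the coordinate ring; hence $\Delta$ is irreducible and in particular square-free, so $\{\Delta=0\}$ is a reduced irreducible cubic hypersurface, of dimension $3$ and degree $3$. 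Its generic point is a rank-$2$ conic only one of whose lines passes through $\uz$, i.e.\ of type (b), so $\{\Delta=0\}$ is the closure of (b). Finally, since $\{\Delta=0\}$ is the set of hyperplanes cutting $\Sigma$ in a singular curve, it is the projective dual $\Sigma^\vee\subset|E+2F|$, which is the parenthetical identification.

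For (c), a line through $\uz$ has equation $\alpha x+\beta y=0$ with no $t$-term, so the product of two such lines has $\rho=\sigma=0$; conversely every conic with $\rho=\sigma=0$ is the binary form $\lambda x^2+\nu xy+\mu y^2$, hence a pair of lines through $\uz$. Thus the closure of (c) is the linear subspace $\{\rho=\sigma=0\}\cong\bbP^2$, of dimension $2$ and degree $1$. For (d), writing the double line as $(\alpha x+\beta y)^2$ gives $[\lambda:\mu:\nu:\rho:\sigma]=[\alpha^2:\beta^2:2\alpha\beta:0:0]$, so (d) is the image of the degree-$2$ map $\bbP^1\to\{\rho=\sigma=0\}$, $[\alpha:\beta]\mapsto[\alpha^2:\beta^2:2\alpha\beta]$. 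Its image is cut out inside the plane $\{\rho=\sigma=0\}$ by $\nu^2=4\lambda\mu$, a nondegenerate and hence smooth conic, of dimension $1$ and degree $2$; equivalently it is the locus inside (c) where the two lines coincide.

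The individual computations are routine once the coordinates are fixed; the only place demanding real care is the bookkeeping of closures and incidences. Specifically, I would need to confirm that $\Delta$ is genuinely irreducible and reduced (so that $\{\Delta=0\}$ has degree $3$, rather than being a lower-degree component taken with multiplicity), that the generic point of $\{\Delta=0\}$ is of type (b) rather than (c) or (d), and that the strata nest as $(\mathrm d)\subset(\mathrm c)\subset\{\Delta=0\}$ with the stated codimensions. Once the irreducibility of $\Delta$ and these containments are in place, all four dimension and degree assertions follow immediately from the explicit coordinate descriptions above.
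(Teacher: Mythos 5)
Your proof is correct, but it proceeds by a genuinely different route from the paper's. The paper argues synthetically and enumeratively: for (b) it gets dimension $3$ from $\dim|S|=2$ plus the pencil of fibers, and degree $3$ by intersecting with a general pencil of conics through $\uz$ (four base points $\uz,p,q,r$, and the three line-pair members $\langle \uz,p\rangle+\langle q,r\rangle$, etc., are exactly the type (b) members of the pencil); for (c) it observes that the conics singular at $\uz$ form a two-dimensional sublinear system, hence a linear subspace of degree $1$; and for (d) that the double lines form the Veronese conic inside it. You instead fix coordinates $[\lambda:\mu:\nu:\rho:\sigma]$ on $|E+2F|\cong\bbP^4$ and compute the discriminant cubic $\Delta=\lambda\sigma^2-\nu\rho\sigma+\mu\rho^2$ (which is indeed, up to the scalar $-1/4$, the determinant of the symmetric matrix with vanishing $t^2$-entry), identifying $\overline{(b)}=\{\Delta=0\}$, $\overline{(c)}=\{\rho=\sigma=0\}$, and (d) as $\{\nu^2=4\lambda\mu\}$ in that plane. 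What your approach buys is explicit equations for all four strata in one uniform computation, with reducedness and degree read off directly rather than via an intersection count, and it makes the parenthetical identification with the dual variety transparent (the discriminant locus is the set of hyperplanes tangent to the smooth surface $\Sigma$); what the paper's approach buys is brevity and coordinate-freeness. The two points you flag as needing care close immediately: $\Delta$, viewed as the binary quadratic $\mu\rho^2-\nu\rho\sigma+\lambda\sigma^2$, is primitive (its coefficients $\mu,-\nu,\lambda$ are coprime), so by Gauss's lemma irreducibility follows from your non-square discriminant argument over the fraction field of $\bbC[\lambda,\mu,\nu]$; and since $\{\Delta=0\}$ is irreducible of dimension $3$ while the singular members of types (c) and (d) all lie in the $2$-plane $\{\rho=\sigma=0\}\subset\{\Delta=0\}$, the generic point of $\{\Delta=0\}$ is necessarily of type (b), giving both $\overline{(b)}=\{\Delta=0\}$ and the nesting $(\mathrm{d})\subset\overline{(\mathrm{c})}\subset\overline{(\mathrm{b})}$ with the stated codimensions.
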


\begin{proof}
For (b), since the linear system $|S|$ has dimension two, the members of the form $S+F$ will have dimension three as claimed.  
The degree of this locus is the number of such members in a general pencil of conics, 
and that is three also: a pencil of conics through $\uz$
will have four base points $\uz, p, q, r$, and the  members
$\langle\uz, p\rangle +\langle q, r \rangle$,
$\langle \uz, q \rangle+\langle p, r\rangle$,
$\langle \uz, r\rangle +\langle p,q \rangle$
are the three singular members giving rise to elements of type (b).

For (c), the members are just the singular members of the system of conics through $\uz$, and this is a sub-linear system (of dimension two)
and therefore has degree one.
For (d), the double lines are a conic in that system.\end{proof}

Of course we have
\[
\overline{(d)} \subset \overline{(c)} \subset \overline{(b)} \subset \overline{(a)} = |E+2F|.
\]

Let us reconfirm that the branch curve for the triple cover $\pi$ has degree $4$.
Since $\pi^{-1}(P) = \Base(P)$ for a pencil $P$ on $\mathbb F_1$,
we see that $P$ is part of the branch curve if and only if there is an infinitely near base point for the pencil $P$ at some point $x\in \mathbb F_1$.
Suppose now that $L$ is a line in $\check \calL$;
then the number of pencils in $L$ with an infinitely near base point
measures the degree of the branch curve.
The line $L$ is determined by a given member (corresponding to a conic) $C \in \calL$:
if we let $L_C$ be the set of pencils inside $\calL$ containing $C$,
then that line $L$ is equal to $L_C$ for some conic $C\in\calL$.

Now inside the complete linear system $|E+2F|$ of conics through $\uz$,
the general pencil containing a smooth conic $C$
and having an infinitely near base point
can be uniquely generated by $C$ and a pair of lines,
one of which passes through $\uz$ and another point of $C$,
and the other of which is tangent to $C$  at some point of $C$.
So this set of pencils is determined by these two points of $C$,
and therefore corresponds to a surface inside $|E+2F|$,
parametrized by these two points of $C$.
This surface has degree four: if we parametrize the points of the conic by quadratics,
the tangent line and the other line are both quadratic expressions,
and their product is a quartic in the parameter.

Finally the number of elements of $L_C \subset \calL$ is just the number of intersection of the dimension two subspace $\calL$ with this degree $4$ surface in $|E+2F|$,
which is then four.

We note next that the base-point-free net $\calL \subset |E+2F|$ cannot lie inside the cubic hypersurface consisting of the closure of the (b) members, by Bertini.  Hence inside $\calL$ there will be a curve consisting of the singular members, and this curve is a cubic curve, which is  the dual of the branch curve, since a member of $\calL$ is the pullback of a line in the $\bbP^2$ and that member is singular if and only if that line is tangent to the branch curve.

In general, the branch curve is an irreducible quartic with three cusps; by the Pl\"ucker relations, its dual is a cubic curve (as noted above) with one node.
(That node corresponds to the single bitangent to the branch curve.)

The intersection of $\calL$ with the closure of the elements of type (c) is the intersection of two planes in the $\bbP^4$ which is $|E+2F|$.  This intersection cannot be a line,
since if it were, then $\calL$ would have two members of type (c), and these two members would then both contain the negative curve $E$.  Therefore any third member of $\calL$, which would then generate $\calL$ along with those first two, would meet $E$, and therefore $\calL$ would have a base point.

Hence $\calL$ meets $\overline{(c)}$ in just one point: there is a single member of $\calL$ of type (c).  That member is, in general, the pre-image of the bitangent line to the branch curve, which splits into the three irreducible components of the type (c) element.  In particular we see that, in general, the negative curve $E$ maps isomorphically to that bitangent line (as  do the two fibers of that type (c) element of $\calL$).

Again in general, $\calL$ will not contain any elements of type (d), for dimension reasons.

\begin{lemma}
$\calL$ contains an element of type (d)
if and only if the quartic branch curve contains a line as a component.
If so, it can only contain one element of type (d).
\end{lemma}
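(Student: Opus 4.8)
The plan is to prove both implications through the correspondence, set up earlier, between members $C\in\calL$ and lines $L_C\subset\check\calL$ in the target plane, under which $\pi^*(L_C)=C$ as a divisor in $|E+2F|$. The decisive input is the classification of the members of $|E+2F|$ into the four types (a)--(d): among these, only the type (d) divisors $E+2F_0$ are non--reduced. So the whole statement will reduce to matching ``line component of $B$'' with ``non--reduced member of $\calL$''.

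For the forward implication, suppose the branch curve $B$ contains a line $a$. Its pull--back $\pi^*(a)$ is a member of $\calL$, lying in $|E+2F|$. Since $\pi$ is ramified over $a$, the divisor $\pi^*(a)$ must be non--reduced: over a general point of $a$ the fibre has the shape $2p+p'$, so the ramification curve over $a$ enters $\pi^*(a)$ with multiplicity $2$. By the classification the only non--reduced members of $|E+2F|$ are those of type (d), hence $\pi^*(a)=E+2F_0$ is a type (d) element of $\calL$. (This is exactly what was extracted in the proof of Proposition \ref{prop:pomp}.)

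For the converse, suppose $\calL$ contains a type (d) member $C_0=E+2F_0$, and let $\ell_0=L_{C_0}\subset\check\calL$ be the corresponding line, so that $\pi^*(\ell_0)=C_0=E+2F_0$. I would first check that $F_0$ is not contracted by $\pi$: indeed $F_0\cdot(E+2F)=1$, so $F_0$ meets the pull--back of a general line in one point, hence $\pi$ maps $F_0$ birationally onto a line, which must be $\ell_0$ since $F_0$ is a component of $\pi^*(\ell_0)$. Because $F_0$ occurs with multiplicity $2$ in $\pi^*(\ell_0)$, the cover is ramified along $F_0$, so $F_0$ is a component of the ramification curve $R$; therefore $\ell_0=\pi(F_0)\subseteq\pi(R)=B$, and $B$ contains the line $\ell_0$.

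Finally, for uniqueness, suppose $\calL$ contained two distinct type (d) members $E+2F_0$ and $E+2F_0'$ with $F_0\neq F_0'$. Both contain the unique negative section $E$ (the only irreducible curve with $E^2=-1$), so the pencil they generate has $E$ as a fixed component; the fibre of $\pi$ over the point of $\check\calL$ corresponding to this pencil, namely $\Base$ of the pencil, would then contain the whole curve $E$. This contradicts the finiteness of $\pi$ --- equivalently, $E$ cannot be contracted since $E\cdot(E+2F)=1\neq0$. Hence at most one type (d) element can occur. The step that needs the most care is the converse: one must be sure that the doubled fibre $F_0$ genuinely maps onto the line $\ell_0$ rather than being contracted, and genuinely lies in the ramification, which is precisely where the intersection number $F_0\cdot(E+2F)=1$ and the non--reducedness of $\pi^*(\ell_0)$ do the work.
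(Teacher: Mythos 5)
Your proof is correct and takes essentially the same route as the paper: both directions come down to the observation that the type (d) divisors are the only non-reduced members of $|E+2F|$, so that a line component of the branch curve corresponds exactly to a non-reduced member of $\calL$ (your extra verification that $F_0$ is not contracted is automatic here, since $\pi$ is finite, but harmless). For uniqueness you argue directly that two type (d) members would share the fixed component $E$, contradicting finiteness of $\pi$; the paper instead invokes its earlier computation that $\calL$ meets the plane $\overline{(c)}$ in a single point, but since that computation rests on the very same mechanism (two members containing $E$ force a base point of $\calL$), your argument is just an unwound version of the paper's.
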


\begin{proof}
The proof is immediate: the element of type (d) has a double component,
and is the pre-image of a line $a$ in $\bbP^2$; 
hence the entire double component is a part of the ramification locus, 
and therefore its image is part of the branch locus.
Conversely, if the branch curve contains a line, the pre-image of that line is a member of $\calL$ with a double component, and that must be a member of type (d).

The final statement follows since if $\calL$ contains two elements of type (d), it would then contain two points in the $2$-plane of elements of type (c), which we have explained above does not happen.
\end{proof}

Let us analyze this case further. Suppose that $\calL$ contains a member of type (d).
We will use affine coordinates $(x,y)$ in $\bbP^2$ such that $\bbF_1$ is the blowup of the origin.
In that case the elements of $|E+2F|$ are represented by conics through $\uz=(0,0)$, and are therefore all in the linear system generated by the monomials $<x^2, xy,y^2,x,y>$.
The linear system $\calL$ has vector space dimension three,
and we may choose the coordinates so that the member of type (d)
is given by the polynomial $x^2$.
In that case we can arrange, using row reduction, that the three generators of $\calL$ are:
\begin{align*}
f &= x^2 \\
g &= xy + px + qy \\
h &= y^2 + rx + sy
\end{align*}
for suitable constants $p,q,r,s$. 
Note that $q\neq 0$ otherwise $\calL$ has base points off $	\uz$.
The triple cover map $\pi$ sends a point $(x,y)$ 
to the point with homogeneous coordinates
$[f(x,y):g(x,y):h(x,y)] \in \bbP^2$.
The points of the line $x=0$ map to points of the form $[0:qy:y^2+sy] = [0:q:y+s]$
which is the line component of the branch curve.

If $[U:V:W]$ are homogeneous coordinates on the target $\bbP^2$,
then we can switch to the affine coordinates $v=V/U,w=W/U$, and write the map $\pi$ as
\[
\pi(x,y) = (v, w) = (y/x + p/x + qy/x^2, (y/x)^2+r/x+sy/x).
\]
If we change generators for the field of functions $\bbC(x,y)$ to $\alpha=1/x,\beta= y/x$,
then this becomes
\[
v = \beta + p \alpha + q \alpha\beta, \;\;\;
w = \beta^2 + r \alpha + s \beta
\]
which defines then the field extension $\bbC(v,w) \subset \bbC(\alpha,\beta)$.
This is a cubic extension of course, 
and we can eliminate $\alpha$ from the two above equations to obtain
\[
r(v-\beta) = (p+q\beta)(w-\beta^2-s\beta)
\]
or
\begin{equation}\label{betacubed}
q\beta^3 + (qs + p)\beta^2 + (ps - qw - r)\beta + (rv - pw) = 0
\end{equation}
which is the defining cubic polynomial determining the field extension.

If one divides by $q$ and completes the cube here, by setting
\[
\gamma = \beta + \frac{s+p/q}{3}
\]
we will obtain a cubic equation of the form
\[
\gamma^3 + A\gamma + B=0
\]
where $A=A(w)$ is linear, and $B=B(v,w)$ is also linear.
Hence we can linearly change coordinates to $A$ and $B$,
and see that the discriminant $D = 4A^3+27B^2$ 
defines an irreducible cuspidal cubic.
The image of the line determined by $x=0$, 
which is the other component of the branch locus for the triple cover map $\pi$, 
is the line at infinity in these coordinates (since $\alpha = 1/x$); 
that line at infinity is the unique flex tangent to the cuspidal cubic.

Hence (see also the proof of Proposition \ref {prop:true}):

\begin{lemma} The net $\calL$ contains a member of type (d) if and only if 
the branch curve of the triple cover is of the form $a+Z$,
where $Z$ is a cuspidal cubic and $a$ is the unique flexed line to $C$.
\end{lemma}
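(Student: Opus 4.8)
The plan is to obtain this biconditional directly from two results already in hand: the immediately preceding lemma, which states that $\calL$ contains a member of type (d) if and only if the branch quartic has a line as a component, and Proposition \ref{prop:pomp}, which classifies the reducible branch quartics of the smooth scroll $\Sigma\cong\bbF_1$. The reverse implication is then essentially free. Assume the branch curve has the form $a+Z$, with $Z$ a cuspidal cubic and $a$ its unique flex tangent. Then $a$ is a line component of the branch curve, so by the preceding lemma $\calL$ contains a member of type (d), and we are done with this direction.

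For the forward implication I would proceed in two steps. First, if $\calL$ contains a type (d) member, then by the preceding lemma the branch quartic contains a line, hence is reducible. Second, Proposition \ref{prop:pomp} tells us that any reducible branch quartic arising from the smooth scroll must be the union of a cuspidal cubic $Z$ with its unique flex tangent line $a$; combining these two facts, the branch curve is of the asserted form $a+Z$. Alternatively, and more explicitly, the forward direction is already contained in the affine computation carried out just above: after normalizing the type (d) member to $x^2$ and row-reducing the generators of $\calL$ to $f=x^2$, $g=xy+px+qy$, $h=y^2+rx+sy$, the defining cubic \eqref{betacubed}, once the cube is completed, has discriminant $D=4A^3+27B^2$ factoring as a line times an irreducible cuspidal cubic, with the line being the image of $x=0$, which is simultaneously the unique flex tangent to the cubic.

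Since both implications reduce to results already established, there is no genuine obstacle here; the lemma is really a packaging of the preceding computation together with Proposition \ref{prop:pomp}. The only point requiring a word of care is confirming that the line $a$ is precisely the flex tangent to $Z$, rather than some arbitrary line meeting $Z$: this specificity is built into the statement of Proposition \ref{prop:pomp}, and is independently visible in the computation, where the component $x=0$ is identified with the line at infinity in the $(A,B)$ coordinates and hence with the unique flex tangent of the cuspidal cubic.
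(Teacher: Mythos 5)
Your proposal is correct, and both of your routes close the biconditional; the difference from the paper is one of emphasis. The paper's own proof of this lemma is the affine computation immediately preceding it (the lemma is introduced with ``Hence''): after normalizing the type (d) member to $x^2$ and row-reducing the generators of $\calL$, the cubic \eqref{betacubed} is completed to $\gamma^3+A\gamma+B=0$ with $A,B$ linear, and the discriminant is seen to be an irreducible cuspidal cubic whose unique flex tangent is the line at infinity, i.e.\ the image of $x=0$; the converse is exactly the earlier lemma (type (d) member if and only if the branch quartic contains a line). Your secondary, ``more explicit'' argument is therefore precisely the paper's proof. Your primary route is genuinely different for the forward implication: instead of the computation, you chain the preceding lemma (type (d) $\Rightarrow$ line component $\Rightarrow$ reducible branch quartic) with Proposition \ref{prop:pomp}, which classifies the reducible case for a smooth scroll as a cuspidal cubic plus its unique flex tangent. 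This is valid, since the alternate-viewpoint subsection has already identified $\pi:\bbF_1\to\bbP^2$ with a projection of the smooth $\Sigma\subset\bbP^4$, which is exactly the hypothesis of Proposition \ref{prop:pomp} --- a point you should state explicitly rather than leave implicit. What each approach buys: your citation route makes the lemma independent of the coordinate computation and shorter, while the paper's computation is self-contained within the alternate viewpoint and yields the concrete identification of the flex tangent as the line at infinity in the $(A,B)$ coordinates, which is then reused for the structure-constants analysis in Section \ref{ssec:rot}. (Minor point: the ``$C$'' in the lemma's statement is evidently a slip for $Z$, as your reading assumes.)
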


\subsection {The structure constants viewpoint}\label{ssec:rot}
Finally let us analyze this case from the viewpoint 
of the structure constants of the algebra,
in the spirit of \cite{M}. 
Recall that the triple cover is determined by four linear polynomials $a,b,c,d$ 
(in affine coordinates on the target $\bbP^2$), 
using equations of the form
\begin{align*}
z_0^2 &= 2A + az_0 + bw_0 \\
z_0w_0 &= -B -dz_0-aw_0 \\
w_0^2 &= 2C + cz_0 + dw_0.
\end{align*}
where $A = a^2-bd$, $B=ad-bc$, and $C=d^2-ac$,
as in \eqref{triplecoverstructureconstants} below.

\begin{lemma}
The triple cover has branch locus equal to 
a cuspidal cubic plus its unique flex tangent line
if and only if
there are linear polynomials $a,b,c,d$ as structure constants for the triple cover
such that $a=b$.
\end{lemma}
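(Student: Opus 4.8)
The plan is to make the branch curve completely explicit in terms of the four structure constants, and then to recognise it as the discriminant of a binary cubic form. First I would compute the trace form of the rank--$3$ algebra defined by the given multiplication table: in the basis $1,z_0,w_0$ one has $\operatorname{tr}(1)=3$, $\operatorname{tr}(z_0)=\operatorname{tr}(w_0)=0$, and from the table $\operatorname{tr}(z_0^2)=6A$, $\operatorname{tr}(z_0w_0)=-3B$, $\operatorname{tr}(w_0^2)=6C$, so the trace-form Gram matrix is block diagonal with determinant $27(4AC-B^2)$. Hence the branch curve, which is cut out by the discriminant of the trace form, is $\{4AC-B^2=0\}$, and substituting $A=a^2-bd$, $B=ad-bc$, $C=d^2-ac$ gives the quartic
\[
4AC-B^2=3a^2d^2-4a^3c-4bd^3+6abcd-b^2c^2 .
\]
Up to sign this is exactly the discriminant of the binary cubic $\Phi(s,t)=bs^3+3as^2t+3dst^2+ct^3$ whose coefficients are the structure constants; this is the binary--cubic description of the cover in the spirit of \cite{M}, in which the freedom in the choice of structure constants is precisely the action of $\operatorname{Aut}$ of the Tschirnhausen bundle, that is of $GL_2$ acting on $(s,t)$ through its $\operatorname{Sym}^3$ representation (this is what I would verify on the torus, e.g. $w_0\mapsto\kappa w_0$, to pin down the normalisation).

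For the implication ``$a=b\Rightarrow$ cuspidal cubic plus flex tangent'' I would set $b=a$ in the quartic, obtaining $a\cdot(3ad^2-4a^2c-4d^3+6acd-ac^2)$, so that the line $\{a=0\}$ splits off. To identify the residual cubic I would use the remaining freedom to take $a,c,d$ as homogeneous coordinates on the target plane; a direct computation then locates a single singular point, shows that its quadratic part is a perfect square (so the singularity is a cusp), and shows that $\{a=0\}$ meets the cubic in one point with multiplicity three at a smooth point, so it is the flex tangent. Alternatively, once the line component is exhibited one may invoke Proposition~\ref{prop:pomp} together with the preceding lemma (a member of type (d) occurs if and only if the branch curve is a cuspidal cubic plus its flex tangent) to conclude without the explicit cusp computation.

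For the converse I would argue through the binary cubic. If the branch curve is a cuspidal cubic plus its flex tangent, then it has a line component $\ell=\{L=0\}$, and along $\ell$ the form $\Phi$ has a repeated root. The crucial point is that this repeated root is a \emph{fixed} point $[s_0:t_0]$ as one moves along $\ell$: a line contained in the discriminant surface of binary cubics is a ruling of the tangent developable of the twisted cubic, along which the double root is constant (equivalently, an affine--linear family of binary cubics all carrying a repeated root must have a constant repeated root, barring the more degenerate triple--root case). Granting this, I would apply the basis change of the Tschirnhausen bundle carrying $[s_0:t_0]$ to $[1:0]$; since a double root at $[1:0]$ forces the coefficients of $s^3$ and $s^2t$ to vanish, i.e. $b=a=0$ along $\ell$, we obtain $a=\lambda L$ and $b=\mu L$. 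A final diagonal rescaling $s\mapsto\kappa s$ multiplies $a,b$ by $\kappa^2,\kappa^3$, so choosing $\kappa=\lambda/\mu$ equalises them and yields $a=b$.

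The main obstacle is this converse, and within it the two points that need genuine justification: that the double root really is constant along the line component (the ruled structure of the discriminant surface, i.e. excluding a moving repeated root for an affine--linear pencil of cubics), and that the transformations used — moving $[s_0:t_0]$ to $[1:0]$ and rescaling $s$ — are legitimate changes of structure constants, which is exactly where the identification of the basis--change freedom with $\operatorname{Sym}^3GL_2$ from the first paragraph is used. Degenerate configurations (for instance $\mu=0$, i.e. $b\equiv0$, or $a,c,d$ linearly dependent) would have to be separated off and shown to correspond to the cone case or to a non--reduced or cyclic degeneration rather than to a cuspidal cubic plus flex tangent.
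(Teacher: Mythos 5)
Your proof is correct, and while your forward direction essentially reproduces the paper's computation, your converse takes a genuinely different route. For ``$a=b$ implies cubic plus flex tangent,'' the paper performs the same factorization of the discriminant that you do (with its sign convention $D=B^2-4AC$, so its residual cubic $4a^2c+ac^2-6acd-3ad^2+4d^3$ is the negative of yours), and then pins down the cusp intrinsically as the total ramification point $A=B=C=0$, i.e.\ the intersection of the lines $a=d$ and $d=c$; your alternative of splitting off the line $\{a=0\}$ and invoking Proposition \ref{prop:pomp} is a legitimate shortcut in the section's standing setting of a smooth scroll, where the branch quartic must be one of the two types. For the converse the paper argues quite differently: it does not work intrinsically with the discriminant, but feeds the normal form already established in the preceding subsection --- a net containing a type (d) member, with generators $x^2$, $xy+px+qy$, $y^2+rx+sy$ --- into the monic cubic \eqref{betacubed}, applies Proposition \ref{TCstructure} from the appendix, and simply observes that the resulting $a$ and $b$ are \emph{constants}, so a rescaling of $\beta$ equalizes them. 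Your converse instead exploits the geometry of the discriminant of binary cubics: the two delicate points you identify are real but both sound. The constancy of the repeated root along the line component admits a cheap rigorous proof: the restriction of $\Phi$ to $\ell$ is affine--linear in a parameter $\lambda$, and factoring over $\bbC(\lambda)$ and clearing denominators, the primitive factorization $\Phi_\lambda = c(\lambda)\,\ell_\lambda^2 m_\lambda$ forces $\deg_\lambda \ell_\lambda = 0$ since the total $\lambda$-degree is one (the same argument disposes of the triple--root case). And the constant $GL_2$ changes of frame are legitimate precisely because in this degree-$4$ case the Tschirnhausen bundle $\calO_{\bbP^2}(-1)\oplus\calO_{\bbP^2}(-1)$ is balanced, so $\operatorname{Aut}$ contains the constant $GL_2(\bbC)$ acting on $(b,3a,3d,c)$ via $\operatorname{Sym}^3$ (twisted by $\det^{-1}$, which does not affect the relative weight $\kappa$ between $a$ and $b$ that your rescaling uses) --- a point that would fail in the later sections with unbalanced bundles. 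Your degenerate case $b\equiv 0$ is correctly disposed of: there $D=a^2(4ac-3d^2)$, a double line plus a conic, never a cuspidal cubic plus flex tangent. In sum, the paper's converse buys brevity by recycling the normalization from the net analysis, while yours is self-contained within the structure-constant formalism and explains conceptually \emph{why} a line component of the discriminant forces $L\mid a$ and $L\mid b$; at the price of the ruling lemma and the $\operatorname{Aut}$ bookkeeping, it would generalize to any balanced triple cover datum.
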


\begin{proof}
Suppose first that $a=b$ as two linear polynomials.
We note that the discriminant of the triple cover (see \cite{M})
is $D = B^2-4AC$; hence if $a=b$, we have
$A=a(a-d)$, $B=a(d-c)$, and $C=d^2-ac$,
so that $a$ factors out of $D$, with the residual cubic being
$a(d-c)^2 - 4(a-d)(d^2-ac) = 4a^2c + ac^2 - 6acd - 3ad^2 + 4d^3$.
This cubic meets the line $L$ defined by $a=0$ only at $d=0$,
so the line $L$ is a flex tangent to this residual cubic.

Moreover the triple cover is totally ramified over the points where $A=B=C=0$,
and solving this we see that either $a=d=0$ 
(this is the flex point of intersection of $L$ and the cubic) 
or $a\neq 0$ so that $a=d=c$.
This latter point is the intersection of the two lines $a=d$ and $d=c$
and gives the cusp of the cubic.

Conversely, let us take the triple cover defined by the cubic extension as in \eqref{betacubed}, divide by $q$, and replace $p/q$ and $r/q$ by $p$ and $r$, to obtain
\[
\beta^3 + (s + p)\beta^2 + (ps - w - r)\beta + (rv - pw) = 0
\]
where $p,r,s$ are constants and $v,w$ are the affine coordinates in the target.

We may now apply Proposition \ref{TCstructure} in the Appendix
to find the triple cover structure constants;
using the notation introduced there, in the above situation we have
\[
e_0 = rv-pw; \;\;\; e_1 = ps-r-w; \;\;\; e_2 = s+p
\]
so that
\begin{align*}
a&=2(s+p)/3; \;\;\; b = 1; \;\;\; \\
c &= -rv-sw+(p+s)(p+s-r); \;\;\; d = (-1/3)w + (s^2+3ps+p^2-r)/3.
\end{align*}

What we see here is that the two structure constants $a$ and $b$
are actually constants, independent of the affine variables $v$ and $w$.
Therefore we can scale $\beta$ to make them equal as constants.
Then $a=b$ as desired; the zero locus is the line at infinity in these affine coordinates,
and gives the flex tangent line to the cuspidal cubic.
\end{proof}

\section{Branch curve of degree 6} \label{sec:6}
\subsection{The general analysis} 
In this section we treat the case where an otp $\pi: X\longrightarrow \bbP^2$ 
has branch curve of degree $6$. 
The general curve $C\in \calL$ is then smooth of genus $g=1$ with $C^2=3$.

\begin{proposition}\label{prop:3} 
Consider an otp $\pi: X\longrightarrow \bbP^2$ 
such that its branch curve has degree $6$. 
Then $X$ has Kodaira dimension $-\infty$ 
and either $X$ has irregularity $q=1$, or $X$ is rational. 
In the former case, either the triple plane is as in the basic exceptional example, 
or the branch curve consists of six lines in a pencil. 
In the latter case, $X$ is a normal cubic surface  in $\bbP^3$ 
and the triple plane map is the projection of $X$ to a plane from a point not on $X$. 
\end{proposition}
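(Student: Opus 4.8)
The plan is to read off the numerical invariants of $\pi$ and then split according to the irregularity $q$, using Corollary \ref{cor:pomp} (hence Pompilj's Theorem \ref{thm:pomp}) when $q=1$ and the linear system $|C|$ when $q=0$.

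\emph{Kodaira dimension and the bound $q\le 1$.} First I would record that $\deg B=6$ gives $h=3$ and $g=1$, so on the minimal resolution $\tilde X$ the general $C\in\tilde\calL$ is a smooth elliptic curve with $C^2=3$; adjunction then yields $C\cdot K_{\tilde X}=2g-2-C^2=-3$. Since $C$ is nef and $C\cdot K_{\tilde X}<0$, no positive multiple of $K_{\tilde X}$ can be effective (such a divisor $D\ge 0$ would satisfy $C\cdot D\ge 0$, whereas $C\cdot mK_{\tilde X}=-3m<0$); hence $\kappa(X)=-\infty$ and $X$ is rational or irrational ruled. If $q\ge 1$, Corollary \ref{cor:pomp} gives $\deg B\ge 2q+4$, so $6\ge 2q+4$ forces $q\le 1$, i.e. $q\in\{0,1\}$.

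\emph{The case $q=1$.} Here $X$ has a genus-$1$ irrational pencil with rational fibres. By Corollary \ref{cor:pomp}, either $\pi$ is Cremona equivalent to an exceptional example, or it is Cremona equivalent to a triple plane whose branch curve is $2q+4=6$ lines in a pencil; in the latter alternative the bound $\deg B\ge 6$ is an equality, so $B$ itself consists of six lines in a pencil. Thus, if $B$ is not six lines in a pencil, we sit in case (a1) of the proof of Theorem \ref{thm:pomp}. I would then use the degree-$6$ constraint to force the integer $m$ of that proof to be $1$: for $m>1$ the branch curve is the proper transform under the degree-$m$ map $\phi_\calP$ of the dual of a smooth plane cubic (the $9$-cuspidal sextic), whose degree exceeds $6$. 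With $m=1$, Claim \ref{cl:four} shows $\tilde X$ is birational to $\Gamma[2]$; a self-intersection count, using $C^2=3$ together with the fact recorded in Section \ref{sec:prob} that $C\cdot\theta>0$ for every $(-1)$-curve $\theta$ on $\tilde X$, shows no blow-up is present, so $\tilde X\cong\Gamma[2]$ and $\pi$ is the basic exceptional example.

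\emph{The case $q=0$.} Now $X$ is rational, so $H^1(\calO_{\tilde X})=H^2(\calO_{\tilde X})=0$. From $0\to\calO_{\tilde X}\to\calO_{\tilde X}(C)\to\calO_C(C)\to 0$, together with $\deg\calO_C(C)=C^2=3$ on the elliptic curve $C$ (so $h^0(\calO_C(C))=3$, $h^1=0$), I get $h^0(C)=1+3=4$ and $h^1(C)=0$. Since $\calL\subset|C|$ is base point free, $|C|$ is base point free and defines a morphism $\tilde X\to\bbP^3$ whose image $\Sigma$ is nondegenerate (hence of degree $\ge 2$) of degree $C^2=3$ times the degree of the map; as $3$ is prime, the morphism is birational onto the cubic surface $\Sigma$. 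Because $C$ is the pull-back of an ample class on $X$, only the resolution curves are contracted, so the morphism factors through $X$ and presents $X$ as the normalization of $\Sigma$. Finally $\calL$ is the codimension-one subsystem cutting out the lines of $\bbP^2$, so $\pi$ is the projection of $\Sigma$ from a point $o\in\bbP^3$, and $o\notin\Sigma$ since projecting a cubic from a point on it has degree $2\neq 3$.

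\emph{Normality and the main obstacle.} It remains to see that $\Sigma$ is normal, i.e. $X\cong\Sigma$. A cubic surface can fail to be regular in codimension one only by containing a double line; but then its normalization is the rational normal cubic scroll, and the projection from $o$ is exactly the degree-$4$ triple plane of Section \ref{sec:4}, giving $\deg B=4$, a contradiction. Hence $\Sigma$ is a normal cubic surface and $X\cong\Sigma$, as claimed. I expect the genuinely delicate point to be the $q=1$ identification: converting the birational/Cremona conclusion of Theorem \ref{thm:pomp} into the exact statement that $\pi$ is the \emph{basic} exceptional example requires both the degree count pinning $m=1$ and the self-intersection argument forcing minimality, so that neither a blown-up model nor a more general exceptional example can intervene.
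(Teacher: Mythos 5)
Your proposal follows the same skeleton as the paper's proof: adjunction gives $K_{\tilde X}\cdot C=-3$, hence $\kappa(X)=-\infty$; the irregularity is at most $1$; Corollary \ref{cor:pomp} handles $q=1$; and $\dim|C|=3$ produces the cubic surface when $q=0$. Your $q=0$ half is correct and in fact more complete than the paper's one-line treatment: base-point-freeness of $|C|$, the primality argument for birationality onto a cubic, the factorization through $X$, and especially the exclusion of a non-normal cubic (whose normalization is the cubic scroll, so the triple plane would be the degree-$4$ one of Section \ref{sec:4}) are all sound and fill in details the paper merely asserts.

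Two of your added steps in the $q=1$ case, however, do not hold as written. First, to force $m=1$ you argue that for $m>1$ the branch curve, being ``the proper transform of the $9$-cuspidal sextic under the degree-$m$ map $\phi_\calP$,'' has degree exceeding $6$. The proper transform alone need not: if the relevant base points sit at cusps of the sextic, each absorbs multiplicity $2$, and a transformation based at three cusps yields a strict transform of degree exactly $12-6=6$. The correct accounting must use the \emph{full} preimage: curves contracted by $\phi_\calP$ onto points of the sextic lie in the branch locus (the pulled-back cover is ramified over every point of the sextic, totally so over the cusps), and it is these extra components that push the degree above $6$ when $\phi_\calP$ is not a projectivity. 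This is exactly the kind of bookkeeping the paper performs in Propositions \ref{prop:4} and \ref{prop:plop}, while here it simply asserts ``it is the basic exceptional example because its branch curve has degree six'' --- so your sketch is no worse than the paper's, but as stated it is not a proof. Second, your claim that ``a self-intersection count \dots shows no blow-up is present, so $\tilde X\cong\Gamma[2]$'' is unsubstantiated: $C^2=3$ together with $C\cdot\theta>0$ for every $(-1)$-curve $\theta$ does not preclude $(-1)$-curves (contracting $\theta$ with $C\cdot\theta=k$ gives image class of self-intersection $3+k^2$ and arithmetic genus $1+k(k+1)/2$, with no immediate contradiction); an honest version would need the numerical analysis on minimal elliptic ruled surfaces that the paper carries out at length in the degree-$8$ case. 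A cleaner route, once $\phi_\calP$ is a projectivity: $X$ and $\Gamma[2]$ are both normal and finite of degree $3$ over $\bbP^2$ inducing the same function-field extension, hence both are the normalization of $\bbP^2$ in that field and are isomorphic \emph{as covers} --- no minimality argument is needed. One last small point: your derivation of $q\le 1$ invokes the bound $\deg B\ge 2q+4$ from Corollary \ref{cor:pomp}, which is stated only in the non-exceptional branch of that corollary; the paper's simpler observation --- $X$ carries a linear system of elliptic curves, so $q\le 1$ whenever $X$ is irregular --- avoids this (exceptional examples have $q=1$ anyway, so your conclusion stands).
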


\begin{proof} 
One has $K_{\tilde X}\cdot C=-3$, 
which implies that $X$ has Kodaira dimension $-\infty$. 
If $X$ is irregular, 
then $q=1$ because $X$ contains a linear system of elliptic curves. 
If $q=1$, we apply Corollary \ref {cor:pomp}. 
Then either the triple plane is an exceptional example 
(and then it is the basic exceptional example 
because its branch curve has degree six) 
or it is Cremona equivalent to a triple plane $\pi': X'\longrightarrow \bbP^2$ 
as in diagram \eqref {eq:loh} 
with the branch curve consisting of the union of $2q+4=6$ lines in a pencil. 
Then the assertion follows from the final assertion of Corollary \ref {cor:pomp}.

If $X$ is regular, then it is rational 
and the complete linear system $|C|$ has dimension $3$ and the assertion follows. 
\end{proof}

Consider now the case in which the otp $\pi: X\longrightarrow \bbP^2$ 
is given by an external projection to a plane of a normal cubic surface. 
In this case the Tschirnhausen vector bundle is 
$\calO_{\bbP^2}(-1)\oplus \calO_{\bbP^2}(-2)$ 
(see \cite [Table 10.5]{M}). 
Let us determine the geometry of the branch curve 
in the case when $X$ is smooth.

\begin{proposition}\label{prop:sm} 
Let $\pi: X\longrightarrow \bbP^2$ be an otp 
given by an external projection to a plane of a smooth cubic surface. 
Then either the branch curve $B$ is an irreducible sextic  that has in general six ordinary cusps 
or it is reducible, consisting of two smooth  cubics 
that intersect in three collinear points with intersection multiplicity $3$ at each of them. 
\end{proposition}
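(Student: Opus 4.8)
The plan is to combine the triple-cover equation for $B$ with the classical polar-quadric description of the projection, and then to split into the irreducible and reducible cases by a genus computation and a divisibility argument respectively.

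First I would record the shape of $B$. By \eqref{eq:slap} with $d=1$ the otp is given by $f=z^3+3az+b$ with $a$ a conic and $b$ a cubic, so that $B=\{4a^3+b^2=0\}$; since $X$ is smooth, Lemma~\ref{lem:kop} shows that $a$ and $b$ have no common factor. Geometrically, writing $y$ for the centre of projection, the ramification curve is $R=X\cap Q_y$, where $Q_y$ is the polar quadric of $X$ with respect to $y$: a point $p\in X$ is a ramification point exactly when the fibre line $\langle y,p\rangle$ is tangent to $X$ at $p$, i.e. $p\in Q_y$. Then $B=\pi(R)$ has degree $6$ and Euler's relation gives $y\notin Q_y$. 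The points of total ramification are those $p$ for which $\langle y,p\rangle$ meets $X$ only at $p$, which is the triple-root locus $\{a=b=0\}$ of $f$; by B\'ezout there are six of them, and at each, taking $a,b$ as local coordinates (in the general case, where the conic and cubic meet transversally), the curve $B$ has the normal form $4u^3+v^2=0$, an ordinary cusp.

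For the irreducible case I would argue by genus. When $Q_y$ is smooth, $R$ is a smooth $(3,3)$-curve on $Q_y$, hence of genus $4$; since over a general point of $B$ the fibre has the $2+1$ ramification pattern, $R\to B$ is birational and the geometric genus of $B$ equals $4$. The genus formula for a plane sextic then gives $\delta+\kappa=10-4=6$ for its numbers of nodes and cusps, and the six total-ramification cusps already account for this, forcing $\delta=0$. Thus in general $B$ is an irreducible sextic with exactly six ordinary cusps and no further singularities; the qualifier ``in general'' covers the special projections for which $R$ acquires singularities or the six cusps collide.

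The reducible case is where the real work lies, and my plan is to show that reducibility forces the conic $a$ to be a double line. Let $D$ be any component of $B$. On $D$ one has $b^2=-4a^3$, so $a^3$, and hence $a$, is a square in the function field $\bbC(D)$; consequently $\operatorname{div}_D(a)$ is even, i.e. the conic meets $D$ with even multiplicity at every point. Moreover, if $p$ lies on two components then $B$ is locally reducible at $p$, which is impossible where $a(p)\neq0$ (there $b^2=-4a^3$ has a single smooth branch); so every intersection point of distinct components satisfies $a=b=0$ and, by the evenness just noted, $\operatorname{ord}_p(a)\ge 2$. A reduced conic has a point of multiplicity $\ge 2$ only at the node of a line-pair, and there at a single point only; since the components of $B$ must meet (B\'ezout) in several such points, the conic $a$ can only be a double line, $a=\ell^2$. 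Then
\[
4a^3+b^2=\bigl(b-2\sqrt{-1}\,\ell^3\bigr)\bigl(b+2\sqrt{-1}\,\ell^3\bigr)
\]
exhibits $B$ as the union of two cubics $C_{\pm}$, with $C_+\cap C_-=V(\ell^3,b)$ equal to the three points $\{\ell=b=0\}$, each counted with multiplicity $3$ and collinear on the line $\ell=0$. The last step is to verify, for generic $b,\ell$ (equivalently, because $X$ is smooth), that $C_{\pm}$ are smooth and distinct, which recovers the asserted configuration.

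The main obstacle I anticipate is precisely the reducible case: beyond the divisibility argument one must rule out the degenerate possibility that $a$ is a genuine line-pair (which could in principle produce components meeting at a single high-contact point rather than three collinear ones) and confirm that $C_{\pm}$ are indeed smooth. Here relating $a=\ell^2$ to the degeneration of $Q_y$ into a pair of planes $P_1\cup P_2$ gives the geometric control I want: the components are then the projections of the plane cubic sections $X\cap P_i$, which are isomorphic to $C_{\pm}$ since $\pi$ restricts to a linear isomorphism on each $P_i$ (as $y\notin Q_y\supset P_i$), and the three intersection points are the images of $X\cap(P_1\cap P_2)$, hence automatically collinear.
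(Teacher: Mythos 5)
There is a genuine gap at the pivotal step of your reducible case, namely the claim that reducibility of $B$ forces $a$ to be a double line. Your parity argument shows only that $\operatorname{div}_D(a)$ is even on (the normalization of) each component $D$, i.e.\ that the conic meets each component with even multiplicity at every point; it does \emph{not} show that the conic has a multiple point there. A smooth conic simply tangent to $D$ at $p$ gives $\operatorname{ord}_p(a|_D)=2$ with $a=0$ smooth at $p$, so your inference ``$\operatorname{ord}_p(a)\ge 2$, hence $p$ is a singular point of the conic'' conflates the restricted order with the multiplicity of the conic. Concretely, a smooth conic tritangent to the cubic $b=0$ satisfies every constraint you derive (three points with $a=b=0$ and even contact), and this is exactly the configuration of Remark \ref{rem:op}(iii), which yields an \emph{irreducible} sextic with three higher-order singularities; parity alone cannot separate it from the genuinely reducible case. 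Moreover your appeal to B\'ezout for ``several'' intersection points is unjustified: two irreducible cubics can meet at a single point with multiplicity $9$. Your proposed remedy via the degeneration $Q_y=P_1\cup P_2$ only applies \emph{after} $a=\ell^2$ is known, so it does not close the gap. The paper closes it by geometry on the cubic surface instead: it first shows $B$ contains no line and no irreducible conic (Picard-lattice computations on $X$), so a reducible $B$ is a union of two irreducible cubics; since no proper secant of the ramification curve passes through the center, each component of $D$ maps birationally to a cubic, and the possibility that such a component is a twisted cubic is excluded because the residual class would lie in $|7;3^6|$, which is empty. Hence both components of $D$ are plane cubics, the polar quadric splits into two planes, and only then does ${\bf b_2}$ (your $a$) become a perfect square.

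Two smaller points. In your irreducible branch, smoothness of $R$ does not follow from smoothness of $Q_y$: the curve $X\cap Q_y$ can be nodal even for smooth $Q_y$ (cf.\ Remark \ref{rem:op}(ii)), so you need Bertini for a general center, as the paper does; your ``in general'' covers this, but note that the rank-three (cone) case of $Q_y$ is then not treated by either of your two branches and must be parked in the irreducible alternative, which again presupposes the corrected reducibility criterion. Granting genericity, your $\delta$-invariant count ($p_a-g=10-4=6$, exhausted by six ordinary cusps at the total-ramification points, so no further singularities) is a legitimately different and arguably slicker route than the paper's, which excludes cusps of order $\ge 2$ by intersecting with the cuspidal tangent and a Riemann--Hurwitz argument on a ramified $g^1_3$; likewise your direct computation $I_p(b+2\sqrt{-1}\,\ell^3,\,b-2\sqrt{-1}\,\ell^3)=3I_p(b,\ell)$ is cleaner than the paper's specialization argument. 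But you must still prove, as the paper does from the smoothness of $X$, that $\ell=0$ meets $b=0$ transversally in three \emph{distinct} points (otherwise the intersection scheme $V(\ell^3,b)$ need not be three points of multiplicity $3$), and your smoothness claim for $C_{\pm}$ needs the tangent-plane argument spelled out: a singular point $q$ of $X\cap P_i$ forces $P_i=T_qX$, which contains the projecting line and hence $y$, contradicting $y\notin Q_y\supset P_i$.
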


\begin{proof} 
Let $p$ be the external point to $X$ from which we make the projection 
and let $D$ be the intersection of $X$ with the polar quadric of $p$ with respect to $X$. 
The curve $D$ is the ramification locus of the triple cover map $\pi$, and 
the branch curve $B$ is the projection of $D$ from $p$. 
Since $D$ is the intersection of a cubic and a quadric surface,
it has degree six in $\bbP^3$, as does the projection $B$ in $\bbP^2$.
The arithmetic genus of $D$ is $4$. 

We claim that $B$ cannot contain a line or an irreducible conic. 

Indeed, if $B$ contains a line $a$, 
then $D$ contains a line $b$ that projects to $a$. 
Then the pull--back via $\pi$ of $a$ consists of $2b$ plus another line.
This means that there is a plane tangent to $X$ along a line,
which cannot be the case if $X$ is smooth. 

Suppose that $B$ contains an irreducible conic $\Gamma$, 
that is the image of an irreducible conic $\Gamma'$ contained in $D$. 
Then the pull--back of $\Gamma$ via $\pi$ consists of $2\Gamma'$ 
plus another irreducible conic $\Delta$. 
Then $2\Gamma'+\Delta\in |\calO_X(2)|$. 
The plane containing $\Gamma'$ intersects $X$ along $\Gamma'+s$, 
where $s\subset X$ is a line. 
Then $2\Gamma'+2s\in |\calO_X(2)|$ and therefore $\Delta\sim 2s$. 
This is not possible, because $s^2=-1$ on $X$ whereas $\Delta^2=0$. 

Since $B$ cannot consist of a double cubic 
because we are assuming that the cover is not cyclic (see Remark \ref {rem:lpi}),
then either $B$ is irreducible 
or it consists of the union of two irreducible cubics. 
In any event $B$ is reduced.

Then, by Corollary 5.8 of \cite{M},
we have that the singularities of $B$ are only points with multiplicity two on $B$;
they are therefore all of type $a_k$ 
(locally analytically defined by an equation of the form $x^2=y^{k+1}$) 
for some $k \geq 1$.

Suppose first that the branch curve $B$ is irreducible. 
Then $D$ is irreducible of arithmetic genus $4$. 
If the centre of projection is general enough, 
then $D$ is non--singular by Bertini's Theorem. 
With the usual argument by now 
(see the proof of Proposition \ref {prop:pomp}), 
we see that no proper secant line to $D$ passes through $p$;
hence the projection $B$ of $D$ has only unibranch, i.e., cuspidal singularities.
Therefore they are all of type $a_{2k}$; these are cusps of order $k$.
We claim that $B$ cannot have cusps of order higher than one:
all singularities are ordinary cusps of type $a_2$.
Indeed, $B$ cannot have a cusp of order $3$ or more 
(of type $a_{2k}$ with $k \geq 3$) 
because the tangent line at a cusp of order at least $3$ 
would have intersection multiplicity at least $7$ at the cusp with $B$, which is a sextic. 
If it has a cusp of order $2$, namely of type $a_4$, 
then the tangent to $B$ at the cusp has intersection multiplicity $5$ with $B$ at the cusp, 
and so it intersects $B$ only in one more point. 
The pull back of this line 
(that is an irreducible curve because at the cusp we have total ramification) 
has therefore a $g^1_3$ that is ramified only at $2$ points 
(one simple and one double ramification),  a contradiction. 
So $B$, that has geometric genus $4$ since $D$ is its normalization, 
has exactly $6$ ordinary cusps of type $a_2$ as singularities.

Suppose next that $B$ is reducible, 
the union of two irreducible cubics $C_1,C_2$, so that $D$ is reducible. 
Since no proper secant line to $D$ can pass through $p$, 
then every irreducible component of $D$ 
maps birationally to an irreducible component of $B$ 
and every irreducible component of $B$ 
is the projection of a unique irreducible component of $D$. 
Hence $D=C_1'+C_2'$, with $C_1', C_2'$ two irreducible cubic curves 
that are respectively mapped to $C_1$ and $C_2$. 

We claim that $C_1', C_2'$ are both plane curves. 
Suppose this is not the case and assume that $C'_1$ is a rational normal cubic. 
Then the pull back of $C_1$ on $X$ consists of $2C'_1+Z$,
where $Z$ is another irreducible cubic curve that projects to $C_1$. 
If $Z$ is a plane cubic, then $2C'_1\in |\calO_X(2)|$,  
which is not possible because $(C'_1)^2=1$ 
whereas the self intersection of members of $|\calO_X(2)|$ is $12$.  
Therefore $Z$ is also a rational normal cubic. 
Now recall that $X$ is the image of the blow--up $\widetilde {\bbP^2}$ of $\bbP^2$ 
at six distinct points $p_1,\ldots, p_6$ (no three collinear, not on a conic) 
via the pull back on $\widetilde {\bbP^2}$ of the linear system $|3;1^6|$. 
We can assume that $C'_1$ is the image of the pull back on $\widetilde {\bbP^2}$ 
of a line $L$ of $\bbP^2$ not passing through any of the points $p_1,\ldots, p_6$. 
Then $Z$ would be in the linear system $|7;3^6|$. 
But the system $|7;3^6|$ is empty: 
indeed, $|7;3^6|$ is Cremona equivalent to $|5;3^3, 1^3|$,
and for this system we see that the three lines joining the points with multiplicity $3$ split off and the residual system $|2;1^6|$ is empty. 
 
We conclude that $C_1', C_2'$ are plane curves 
respectively isomorphic to $C_1$ and $C_2$
via the projection from $p$. 
Then the polar quadric of $p$ with respect to $X$ 
is the union of the two planes containing $C_1'$ and $C_2'$.
We remark that $C_1', C_2'$ are both smooth 
and therefore also $C_1$ and $C_2$ are  smooth. 
Indeed, if, say, $C'_1$ would be singular at a point $q$ 
then the plane containing $C'_1$ would be tangent to $X$ at $q$. 
But then this plane should contain $p$, which is not possible.

To finish the proof we make a specific computation.  As in the proof of Proposition \ref {prop:true}, 
we may assume that $p$ is the point $[x:y:z:t] = [0:0:1:0]$, 
that $X$ has equation 
\[
f=z^3-3{\bf b_2} z+ {\bf b_3}=0,
\]
and that the $\bbP^2$ on which we project has equation $z=0$. 
The polar quadric of $p$ with respect to $X$ has equation $z^2={\bf b_2}$. 
Since this quadric has to be reducible in two distinct planes, 
${\bf b_2}$ has to be the square of a linear form. 
Up to a change of coordinates, we can assume that ${\bf b_2}=x^2$. 
Hence the two planes forming the polar quadric of $p$ 
have equations $z+x=0$ and $z-x=0$, 
which intersect along the line $x=z=0$. 
The branch curve has equation $4{\bf b_2}^3-{\bf b_3}^2=0$, 
hence the two cubics $C_1$ and $C_2$ on the plane $z=0$ have equations 
\[
{\bf b_3}-2 x^3=0, \quad {\bf b_3}+2x^3=0.
\]
We claim that the line $x=z=0$ intersects $X$ in three distinct points. 
In fact the line $x=z=0$ intersects $X$ in the points with $x=z=0$ 
and ${\bf b_3}(y,0,t)=0$. 
Suppose that one of these points $q=(0,\eta,0,\tau)$ appears with multiplicity at least two. Then we have
$$
\begin{aligned}
&\frac  {\partial f} {\partial x}(q)=\frac {\partial {\bf b_3}} {\partial x}(\eta,0,\tau)=0\\
&\frac  {\partial f} {\partial y}(q)=\frac {\partial {\bf b_3}} {\partial y}(\eta,0,\tau)=0\\
&\frac  {\partial f} {\partial z}(q)=0\\
&\frac  {\partial f} {\partial t}(q)=\frac {\partial {\bf b_3}} {\partial t}(\eta,0,\tau)=0
\end{aligned}
$$
and so $q$ would be a singular point for $X$, a contradiction. 
Hence the curves $C_1$ and $C_2$ 
intersect in three distinct points along the line $x=z=0$.

We finally claim that $C_1$ and $C_2$ intersect at each of these three points 
with intersection multiplicity $3$. 
Indeed, since no proper secant line to the ramification curve $D$ passes through $p$,
then $C_1$ and $C_2$ have no intersection point off the three aforementioned points. 
To prove the claim, we first assume ${\bf b_3}$ to be a general polynomial of degree $3$ 
and call $m_i$ the intersection multiplicity of $C_1$ and $C_2$ at the three points, 
with $1\leq i\leq 3$. 
Now specialize ${\bf b_3}$ so that ${\bf b_3}(y,0,t)=y^3-t^3$. Under this specialization, $C_1$ and $C_2$ specialize to the two cubics with equations $y^3\pm 2x^3-t^3=0$, 
that intersect at the three points $(1,0,1)$, $(\pm \xi,0,1)$, 
with $\xi$ a primitive cubic root of $1$. 
A direct computation shows that in this case $C_1$ and $C_2$  
have intersection multiplicity $3$ at each of the three points. 
As a consequence, if ${\bf b_3}$ is a general polynomial of degree $3$
we have $m_1=m_2=m_3=3$ 
and therefore the same happens for every specialization of ${\bf b_3}$. 
\end{proof}

\begin{remark} \label{rem:op} 
(i) We notice that the proof of Proposition \ref {prop:sm} 
shows that there are otp's $\pi: X\longrightarrow \bbP^2$ 
given by an external projection to a plane of a smooth cubic surface 
with branch curve reducible in two smooth cubics.  
Actually we expect that, for a general smooth cubic surface $X$ in $\bbP^3$, 
there are exactly $10$ points $p\not\in X$ 
such that the projection of $X$ from $p$ gives rise to a triple plane 
with branch curve $B$ reducible in two smooth cubics. 
Indeed, given $X$, look at the $3$--dimensional linear system $\calF$ 
of polar quadrics of $X$. 
Inside $\calF$ there is the discriminant surface $\calX$, 
whose points correspond to singular quadrics. 
The surface $\calX$ has degree $4$ 
and it is defined by the vanishing of a $4\times 4$ symmetric determinant of linear forms. 
As a consequence $\calX$ has  exactly $10$ double points 
that correspond to rank $2$ quadrics. 
Accordingly there are $10$ points $p$ such that 
the polar quadric of $p$ with respect to $X$ is reducible in two distinct planes. 
The projection of $X$ from such points $p$ 
give rise to otp's with branch curve reducible in two smooth cubics. \smallskip

(ii) In the statement of Proposition \ref {prop:sm}, 
we claim that if the branch curve $B$ is irreducible, 
then \emph{in general} it has six ordinary cusps. 
We want to stress that the generality assumption is essential for the conclusion. 
Indeed (keeping the notation of the proof of Proposition \ref {prop:sm}), 
it may happen that in particular cases the ramification curve $D$ is irreducible 
but singular, it has perhaps a node. 
In that case in general the branch curve $B$ will accordingly have a tacnode
in correspondence with the node of $D$, not a cusp. 
Since we may expect that in the $3$--dimensional linear system of curves $D$ 
cut out on $X$ by its first polars there are curves $D$ with three nodes, 
we may also expect that there are special projections of $X$ 
for which the branch curve is irreducible with three tacnodes instead of six cusps. 
However we will not dwell here 
on the problem of which configurations of singularities the branch curve may have 
for special projections. 

As for the presence of points $x\not\in X$ 
for which the ramification curve $D$ has a node, 
remember that we know that there are points $p\not\in X$ 
such that the polar of $X$ with respect to $p$ splits in two planes, 
whose intersection is a line 
intersecting $X$ in three distinct points $x_1,x_2, x_3$, 
such that the ramification curve $D$ of the projection of $X$ from $p$ 
consists of two plane cubics intersecting at $x_1,x_2, x_3$. 
Now let us consider the line $\langle p, x_1\rangle$. 
The polars of the points on this line cut out on $X$ a pencil of curves 
and all these curves are singular at $x_1$ 
and the general one has a node at $x_1$. 
Indeed, both the polars of $p$ and of $x_1$ have a singularity in $x_1$ 
and the polar of $p$ has a node at $x_1$. 
So the polar of a general point $x\in \langle p, x_1\rangle$ 
cuts on $X$ an irreducible curve $D$ with a node at $x_1$. 

(iii) Returning to the cubic equation $z^3+A(x,y)z+B(x,y)=0$ for the triple plane,
where $A$ is a quadratic and $B$ is a cubic,
then when $A=0$ and $B=0$ meet transversally the branch curve
(defined by the discriminant $D = 4A^3+27B^2$)
clearly has an ordinary cusp.
Suppose on the contrary that $A=0$ and $B=0$ are tangent (to order two).
Choosing analytic coordinates at the point of intersection,
we may assume that $A=x$ and $B = x+y^2$;
in that case $D = 4x^3 + 27(x+y^2)^2=0$ 
has a double tangent ($x=0$)
and after one blowup it has a cusp (of type $a_2$).
Hence $D=0$ has a singularity of type $a_4$.

The type of singularity of the branch locus is determined by the nature of the intersections between the conic and the cubic; to obtain three $a_4$ singularities we use a conic that is tritangent to the cubic, for example.  Higher-order cusps are possible with higher-order tangencies between the conic and the cubic; we will not seek an exhaustive analysis here.
\end{remark}

The case in which $X$ is a singular normal cubic surface is more complicated 
and we will not delve here into the minute classification 
of all possible cases that show up for the branch curve of a triple plane 
given by an external projection of such a surface. 
We remark that, if $X$ has $\delta$ ordinary double points 
(with $1\leq \delta\leq 4$) 
in general such a branch curve will be a sextic with $6$ cusps and $\delta$ nodes.

\subsection{The structure constants viewpoint}\label{ssec:iol} 
From the perspective of the structure constants for the triple cover
as developed in \cite{M},
we see that in the case of an external projection of a cubic surface in $\bbP^3$ to a plane,
we have, as we said, that the Tschirnhausen vector bundle is isomorphic to 
$\calO_{\bbP^2}(-1)\oplus\calO_{\bbP^2}(-2)$.
In the notation of \cite{M},
if we take $z$ to be the local coordinate for the $\calO_{\bbP^2}(-1)$ summand
and $w$ for the $\calO_{\bbP^2}(-2)$ summand,
then the structure constants $a,b,c,d$ 
(see \eqref{triplecoverstructureconstants} below)
for the triple cover have degrees
$1$, $0$, $3$, $2$ respectively.
The constant $b$ cannot be zero, and we can scale it to be equal to $1$;
this leads to the algebra being defined by equations of the form
\begin{align*}
z^2 &= 2A + az + w \\
zw &= -B -dz-aw \\
w^2 &= 2C + cz + dw.
\end{align*}
where $A = a^2-d$, $B=ad-c$, and $C=d^2-ac$;
these have degrees $2$, $3$, and $4$ respectively.
Using the first equation to solve for $w (=z^2-2A-az)$, 
and inserting that into the second equation,
gives the relationship
\[
z(z^2-2A-az) = -B -dz -a(z^2-2A-az)
\]
which simplifies to
\[
z^3 + (-3A)z + (B-2Aa) = 0.
\]
This is the equation of the cubic surface.
Hence in the notation above, we have
\[
{\bf b_2} = A = a^2-d \;\;\;\text{ and }\;\;\; {\bf b_3} = B-2Aa = ad-c-2a(a^2-d) = 3ad-2a^3-c.
\]
The branch curve is given by the polynomial 
\begin{align*}
D&=B^2-4AC = (ad-c)^2-4(a^2-d)(d^2-ac)=\\
&= a^2d^2-2acd+c^2-4a^2d^2+4d^3+4a^3c-4acd=\\
&= -3a^2d^2 -6acd +c^2 +4d^3 +4a^3c
\end{align*}
which is the sextic.

In general, this is a sextic with $6$ cusps.
We have seen above that the special case of two cubics 
meeting at three points with intersection multiplicity $3$ 
occurs exactly when ${\bf b_2}=a^2-d$ is a perfect square,
which we may assume is not zero since the cover is assumed to be not cyclic.
If we choose coordinates in the plane so that $a^2-d = x^2$,
then we have that $d = a^2-x^2$, so that
$A = {\bf b_2} = x^2$, $B= a^3-ax^2-c$, and $C = a^4-2x^2a^2+x^4 -ac$;
we also have ${\bf b_3} = a^3-3ax^2-c$.
In that case the branch curve is defined by
\[
D=B^2-4AC=(a^3-ax^2-c)^2-4x^2(a^4-2x^2a^2+x^4 -ac)
\]
which factors as
\[
(a^3 - 3ax^2 + 2x^3 - c)(a^3 - 3ax^2 - 2x^3 - c);
\]
these factors define the two cubic curves forming the branch curve.
Their intersection occurs at the three points where $x=0$ and $a^3=c$.
We note (consistent with the prior notation) 
that these two cubics are given by the equations
${\bf b_3} \pm 2x^3 = 0$.

%We can choose the coordinate $y$ and $t$ so that one of the three intersections
%of $x=0$ and ${\bf b_3}=0$ occurs at $y=0$, i.e., the point $[x:y:t]=[0:0:1]$.
%This means that ${\bf b_3} = x\alpha(x,y,t) + y\beta(y,t)$,
%so that the tangent line to ${\bf b_3} \pm 2x^3 = 0$ at $[0:0:1]$ is given by
%$\alpha(0,0,1) x + \beta(0,1) y = 0$,
%and we can rescale the $t$ coordinate so that $\beta(0,1)=-1$.
%Set $\alpha(0,0,1) = \alpha_0$; then the tangent line is $y=\alpha_0 x$
%and this intersects the curve at the roots of
%$x\alpha(x,\alpha_0 x,t) + \alpha_0 x\beta(alpha_0 x,t)$.

We have already seen above that the polynomial ${\bf b_3}(0,y,t)$ has three distinct roots, i.e., the cubic ${\bf b_3} = 0$ meets the line $x=0$ in three distinct points.
Therefore the two cubics ${\bf b_3} \pm 2x^3 = 0$ meet in three distinct points;
since the pencil they generate has the triple line $x^3=0$ as a member,
the base points of that pencil are the three points, each with multiplicity three.\medskip

\section{Branch curve of degree 8}\label{sec:8}

\subsection{Generalities}
Consider now a triple plane $\pi: X\longrightarrow \bbP^2$ 
such that its branch curve has degree $8$.
Such a triple plane is certainly not cyclic, so it is an otp. 
Then the general curve $C\in \calL$ is smooth of genus $g=2$ with $C^2=3$. 

\begin{proposition}\label{prop:4}
Consider a triple plane $\pi: X\longrightarrow \bbP^2$ 
such that its branch curve has degree $8$. 
Then $X$ has Kodaira dimension $-\infty$ 
and $X$ has irregularity $q\leq 2$. 
If $q=2$ the branch curve of the triple plane consists of eight  lines in a pencil. 
\end{proposition}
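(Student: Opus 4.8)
The plan is to mirror the degree-$6$ argument of Proposition \ref{prop:3}: first fix the Kodaira dimension by adjunction, then route the irregularity through Pompilj's theorem in the form of Corollary \ref{cor:pomp}.

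First I would record that $d=8$ forces $h=4$, so the general curve $C\in\tilde{\calL}$ has genus $g=2$ and $C^2=3$; adjunction on $\tilde X$ then gives
\[
K_{\tilde X}\cdot C=2g-2-C^2=2-3=-1.
\]
Since $C$ is big and nef, a negative value of $K_{\tilde X}\cdot C$ rules out $\kappa(\tilde X)\ge 0$ (an effective multiple of $K_{\tilde X}$ would meet the nef curve $C$ non-negatively, contradicting $mK_{\tilde X}\cdot C=-m<0$), so $X$ has Kodaira dimension $-\infty$. This is the exact analogue of the computation $K_{\tilde X}\cdot C=-3$ in the degree-$6$ case.

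Next, to bound $q$, I would assume $q>0$ and invoke Corollary \ref{cor:pomp}, which applies because $X$ has Kodaira dimension $-\infty$ and is irregular, hence is ruled over a curve of genus $q$ and carries an irrational pencil of genus $q$ with rational fibres. The corollary splits into two alternatives. If the triple plane is Cremona equivalent to an exceptional example, then $X$ is birational to an elliptic ruled surface and so $q=1$. Otherwise the corollary yields $\deg B\ge 2q+4$, and since $\deg B=8$ this gives $q\le 2$. Either way $q\le 2$. For the final clause, when $q=2$ the exceptional alternative (which forces $q=1$) is excluded, so the second alternative holds and gives $\deg B\ge 2q+4=8$; as $\deg B=8$ we are in the equality case, and the concluding sentence of Corollary \ref{cor:pomp} states exactly that the branch curve of $\pi$ then consists of $2q+4=8$ lines in a pencil.

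The one point requiring care is the clean separation of the two alternatives in Corollary \ref{cor:pomp}: I must be sure that Cremona equivalence to an exceptional example pins $q$ down to $1$, so that $q=2$ lands unambiguously in the second alternative and the equality clause can be applied. This rests on the facts, available from Section \ref{ssec:exc}, that exceptional examples are birational to elliptic ruled surfaces and that $q$ is a birational invariant. Everything else is a direct reading of Corollary \ref{cor:pomp}; the genuinely hard question of whether $q=1$ can actually occur is the delicate matter deferred to Proposition \ref{prop:deg=8notq=1}.
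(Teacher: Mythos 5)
Your first step (adjunction gives $K_{\tilde X}\cdot C=-1$, hence $\kappa(X)=-\infty$ since $C$ is nef) is exactly the paper's argument, and your reading of the two alternatives and of the equality clause of Corollary \ref{cor:pomp} is also correct. The genuine gap is the pivot on which both your bound $q\le 2$ and your exclusion of the exceptional alternative at $q=2$ turn: the claim that ``exceptional examples are birational to elliptic ruled surfaces and so $q=1$,'' which you attribute to Section \ref{ssec:exc}. That section asserts only that an exceptional example carries an irrational pencil of genus $1$; it does not say the surface is elliptic ruled, and in general it is not. For an exceptional example built from a degree-$m$ map $\phi:\bbP^2\dasharrow\bbP^2$ given by a net of curves of degree $k$, the fibres of the genus-$1$ pencil are birational to the preimages $\phi^{-1}(\ell)$ of the tangent lines $\ell$ to the nine-cuspidal sextic, i.e.\ to members of the net, and these have positive genus for general $\phi$ (e.g.\ a net of cubics gives genus-$1$ fibres over $\Gamma$). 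Only the basic exceptional example $\Gamma[2]$ is elliptic ruled, and nothing in the paper shows $q=1$ for all exceptional examples; so as written your argument assumes the key point rather than proving it.

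The paper avoids this in two ways, and your argument can also be repaired. For $q\le 2$ the paper argues directly and more elementarily: $X$ is swept by the genus-$2$ curves $C$, so the Albanese image of a general $C$ generates the Albanese variety and $q\le g(C)=2$; no appeal to Pompilj is needed here. For the exclusion at $q=2$, the paper first shows the ruling fibres $F$ of the genus-$2$ ruling satisfy $C\cdot F=1$, so they map to a one-parameter family $\calF$ of lines; if the triple plane were Cremona equivalent to an exceptional example, the map $\phi$ in the corresponding diagram would send the lines of $\calF$ to the tangent lines of the branch sextic of the basic example, forcing $\phi$ to be a projectivity and hence $X\cong\Gamma[2]$, contradicting $q=2$. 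Alternatively, your route can be salvaged with a short lemma you did not supply: when $\kappa(X)=-\infty$ and $q=2$, $\tilde X$ is ruled over a curve $G$ of genus $2$, and every irrational pencil with irreducible general fibre factors through the ruling (its rational fibres must map to points of the irrational base), so the genus-$1$ pencil inherited from the exceptional example would give a map $G\to\Gamma$ of degree $1$ by irreducibility of the general fibre, which is impossible since $g(G)=2\ne 1$. Some such argument is required where you currently have only an incorrect citation.
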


\begin{proof} 
One has $K_X\cdot C=-1$, 
which implies that $X$ has Kodaira dimension $-\infty$
(and hence $p_g=0$). 
If $X$ is irregular, one has $q\leq 2$ 
because $X$ contains a linear system of genus two curves. 

Suppose  that $q=2$ and consider, as usual, the minimal desingularization $\phi: \tilde X\longrightarrow X$  of $X$ (see Section \ref {sec:prob}). Since $X$ has $q=2$ and $\kappa=-\infty$, we have a surjective morphism  $a:  \tilde X\longrightarrow G$ where $G$ is a smooth curve of genus 2 and the general fibre $F$ of $a$ is a $\bbP^1$. If $C$ is a general curve in $\tilde {\calL}$, that has genus 2, then $a_{|C}: C\longrightarrow G$ is an isomorphism, hence $C\cdot F=1$. So in the map $\pi\circ \phi: \tilde X\longrightarrow \bbP^2$, the fibres of the map $a$ are mapped to lines of a family $\calF$. 

Now we invoke Corollary \ref {cor:pomp}. Then either $\pi: X\longrightarrow \bbP^2$ is Cremona equivalent to an exceptional example, or the branch curve of
$\pi: X\longrightarrow \bbP^2$ consists of $8$ lines in a pencil. So, to finish the proof, we have to exclude the former case. In that case in fact we have a diagram 
\begin{equation}\label{eq:ex}
%\begin{displaymath}
\xymatrix{
X \ar[d]_{\pi}\ar@{-->}[r] &
\Gamma[2]\ar[d]\\
\bbP^2 \ar@{-->}[r]^{\phi}& \bbP^2   }
%\end{displaymath}
\end{equation}
(where $\Gamma$ is a smooth elliptic curve and and $\Gamma[2]\longrightarrow \bbP^2$ is the basic exceptional example) and the rational map $\phi: \bbP^2 \dasharrow \bbP^2$ maps the lines of the family $\calF$ to the tangent lines to the branch curve of the basic exceptional example. This implies that $\phi$ is a projectivity and therefore $X\cong \Gamma[2]$ which is impossible because $q=2$. 
\end{proof}

\begin{example} \label{ex:lpo} 
To give a specific example of an otp 
with branch curve of degree $8$ and $q=2$ that is not a cone, 
look at the following situation. 
There are quintic surfaces $S$ in $\bbP^3$ 
with two skew lines $r,s$ of singular points, $r$ of double points, $s$ of triple points 
(see for instance \cite [p. 477]{Co}). 

The construction of such a surface is straightforward. 
In fact, take a smooth irreducible curve $C$ of genus $2$, 
and consider the canonical $g^1_2$ of $C$ 
and a base point free linear series $g^1_3$ on $C$. 
Then take two skew lines $r,s$ of $\bbP^3$ 
and map $C$ to $r$ via the $g^1_2$ and $C$ to $s$ via the $g^1_3$. 
Then for each point of $C$ take the corresponding points on $r$ and $s$ 
and join them with a line. 
The union of these lines is the desired quintic surface, 
and it is a scroll, with sectional genus $2$. 

The projection of $S$ from a general point $x$ of $r$ to $\bbP^2$ 
induces a triple plane $\pi: X\longrightarrow \bbP^2$ 
where $X$ is a smooth surface birational to $S$. 
The reader can readily check that 
the projection $\phi: S\dasharrow \bbP^2$ of $S$ from $x$ 
contracts $r$ to a point $p$ 
and the ruling of $S$ is mapped three to one to the pencil of lines with centre $p$, 
so that the branch curve consists of $8$ lines through $p$. 
\end{example}

\subsection{Exclusion of the $q=1$ case}
 
\begin{proposition}\label{prop:deg=8notq=1} 
Consider a triple plane $\pi: X\longrightarrow \bbP^2$ 
such that its branch curve has degree $8$. 
Then $X$ cannot have irregularity $q=1$.
\end{proposition}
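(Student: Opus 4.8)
The plan is to argue by contradiction: assume $q=1$. By Proposition \ref{prop:4} we have $\kappa(X)=-\infty$, so the minimal resolution $\tilde X$ is an elliptic ruled surface carrying a fibration $a\colon \tilde X\to \Gamma$ over a smooth genus-$1$ curve $\Gamma$ whose general fibre $F$ is a $\bbP^1$. A general $C\in \tilde\calL$ satisfies $g(C)=2$, $C^2=3$ and $K_{\tilde X}\cdot C=-1$; set $n:=C\cdot F$. Since $a|_C\colon C\to \Gamma$ is a degree-$n$ morphism from a genus-$2$ curve to a genus-$1$ curve, Riemann--Hurwitz forces $n\ge 2$ (its ramification divisor has degree $2$). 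I would then invoke Corollary \ref{cor:pomp}: either the irrational pencil is composed with the triple plane map, or $\pi$ is Cremona equivalent to an exceptional example.

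First I would dispose of the composed case. Here, as in Section \ref{ssec:comp}, the images of the fibres $F$ sweep out a fixed-part-free pencil $\calP$ of rational curves of degree $n$, with three fibres lying over each member and with $\varphi\colon \Gamma\to \bbP^1$ of degree $3$. By Riemann--Hurwitz the branch divisor of $\varphi$ has degree $2\gamma+4=6$, and over each of the corresponding six members of $\calP$ the cover $\pi$ ramifies along an entire fibre (simply over a simple branch point of $\varphi$, along a double fibre over a total-ramification point). Tallying these contributions gives $\deg B=6n$, equivalently $g(C)=3n-2$; since $n\ge 2$ this forces $\deg B\ge 12$, contradicting $\deg B=8$ (and $3n-2=2$ has no integral solution). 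Hence the composed case cannot occur.

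It remains to exclude the case that $\pi$ is Cremona equivalent to an exceptional example, and this is where the real difficulty lies, since $q$ is a birational invariant while $\deg B$ (equivalently $g(C)$) is not, so no degree count settles it. As the class of exceptional examples is closed under Cremona equivalence of the base, I may assume that $X$ itself is a base change of the basic exceptional example $\Gamma[2]\to \bbP^2$ along a dominant rational map $\phi$ attached to the net $\calP$ of degree-$n$ fibre-image curves (as in Claim \ref{cl:three}). Its branch curve is then $\phi^{-1}(B_0)$, where $B_0$ is the dual sextic of $\Gamma$ with its $9$ cusps, and a general $\calL$-curve $C'$ is a triple cover of the image $\phi(\ell')$ of a general line $\ell'$, branched along $\phi(\ell')\cap B_0$. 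If this intersection were transverse and avoided the cusps, Riemann--Hurwitz would again yield $g(C')=3n-2$ (consistent with $g=1$, $n=1$ for the basic example), excluding $g=2$. The crucial point is that, unlike the composed case, this formula is \emph{not} rigid: for special nets $\calP$ whose general image curve is tangent to $B_0$ or runs through its cusps (the points of total ramification of $\pi$), the genus drops below $3n-2$, so $g(C')=2$ can in principle be reached.

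The hard part will therefore be the exceptional case: I would classify these special nets $\calP$ configuration by configuration, bounding $n$ and the admissible base-point scheme of $\calP$ and accounting precisely for how each non-transverse incidence with the cuspidal sextic $B_0$ lowers the genus, and then show that none of them simultaneously realizes $g(C')=2$ (that is, $\deg B=8$), a base-point-free net defining a genuine degree-$3$ map, and a normalization $X$ that is an ordinary triple plane with $q=1$. I expect this bookkeeping over the possible $\calP$ to be the source of the length and delicacy of the proof.
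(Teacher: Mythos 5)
Your reduction via Corollary \ref{cor:pomp} is sensible, and your disposal of the composed case is essentially correct: restricting to a general line $\ell$, the genus-$2$ curve $C=\pi^{-1}(\ell)$ is the normalization of the fibre product $\ell\times_{\bbP^1}\Gamma$, and Riemann--Hurwitz gives $\deg R=6n=2g(C)-2+6=8$, which has no integer solution (your $g=3n-2$). (A small caution: special members of $\calP$ may be reducible, so the branch curve need only contain \emph{some} components of the six distinguished fibres; the clean justification of the count is the fibre-product computation just indicated, not ``ramification along entire fibres.'') But the proposal does not prove the Proposition: the exceptional case---which you yourself identify as the crux---is only announced as a program (``classify these special nets configuration by configuration''), with no bound on $n$ or on the degree and base scheme of $\calP$, no list of configurations, and no argument that each one fails. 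Since $\deg B$ is not a birational invariant, a single exceptional example with branch curve of degree $8$ would refute the statement, and nothing in your text excludes it. That this bookkeeping is genuinely hard, not routine, is witnessed by the paper itself: at degree $10$ the exactly analogous exceptional possibility with $p_g=0$, $q=1$ and Kodaira dimension $-\infty$ is left \emph{open} by the authors (Section \ref{ssec:opla}, Proposition \ref{prop:plop}), so one cannot expect the degree-$8$ version of the same analysis to be a matter of finite verification.

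It is also worth noting that the paper's proof of Proposition \ref{prop:deg=8notq=1} never routes through Pompilj's theorem at all, presumably for this very reason. It argues intrinsically on the minimal desingularization $\tilde X$: after showing $K_{\tilde X}+C$ is nef (Claim \ref{cl:uno}) and that $\tilde X$ is a minimal elliptic ruled surface or a single blow-up of one (Claim \ref{cl:due}), the key idea---absent from your proposal---is to twist by arbitrary $\eta\in\Pic^0(\tilde X)$: since $\chi(\calO_{\tilde X})=0$, Riemann--Roch makes every $C_\eta=C-\eta$ effective, big, nef and $1$-connected, carrying a base-point-free $g^2_3$; a $1$-connected genus-$2$ curve with such a series must be reducible, and Proposition (A.5) of \cite{CFM} then yields decompositions $C_\eta=A+B$ with $A\cdot B\in\{1,2\}$ whose numerical possibilities are eliminated one by one on $\bbP(\calE)$ over an elliptic curve with invariant $e\in\{-1,0,1\}$ (Claims \ref{cl:3}, \ref{cl:4}, \ref{cl:5}). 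This treats composed and exceptional triple planes uniformly and sidesteps all branch-curve bookkeeping. To repair your argument you would either have to actually execute the exceptional-case classification (whose feasibility is doubtful, given the degree-$10$ precedent) or replace it by an intrinsic argument of the paper's type.
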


\begin{proof} 
Let us work on the minimal desingularization $\phi: \tilde X\longrightarrow X$ of $X$, 
with the usual notation. We need a few preliminaries.

\begin{claim} \label{cl:uno} 
$K_{\tilde X}+C$ is nef.
\end{claim} 

\begin{proof} [Proof of the Claim \ref {cl:uno}] 
Since $C$ is big and nef,
one has  $h^i(\tilde X,\calO_{\tilde X}(K_{\tilde X}+C))=0$, for $i=1,2$, 
by the Kawamata--Viehweg theorem. 
Then by the Riemann--Roch theorem 
$h^0(\tilde X,\calO_{\tilde X}(K_{\tilde X}+C))=1$, 
so that $K_{\tilde X}+C$ is effective.
If there is an irreducible curve $\theta$ such that $\theta\cdot (K_{\tilde X}+C)<0$, 
one has $\theta^2<0$ and $K_{\tilde X}\cdot \theta <  0$ since $C$ is nef;
therefore  $\theta$ is a $(-1)$--curve, and hence $K_{\tilde X} \cdot \theta = -1$.  
This forces $C\cdot \theta=0$, which is impossible in our situation. 
\end{proof}

\begin{claim} \label{cl:due} 
Either $\tilde X$ is minimal, 
or $\tilde X$ is the blow--up of a minimal ruled elliptic surface $X'$ in one point. 
In this case there is a unique fibre of the Albanese pencil of $\tilde X$ 
that is reducible and it consists of two irreducible $(-1)$--curves. 
\end{claim} 

\begin{proof} [Proof of the Claim \ref {cl:due}]
By Claim \ref  {cl:uno}, we have
$$
0\leq (K_{\tilde X}+C)^2=K_{\tilde X}^2+4(g-1)-C^2=K_{\tilde X}^2+1.
$$
Then $K_{\tilde X}^2\geq -1$ and so either  $\tilde X$ is minimal and $K_{\tilde X}^2=0$ 
or $K_{\tilde X}^2= -1$ 
and $\tilde X$ is the blow--up of a minimal ruled elliptic surface in one point. 
\end{proof}

Let now $\eta\in {\rm Pic}^0(\tilde X)$ be any element, and let us consider the divisor $C_\eta=C-\eta$. Since $\chi (\calO_{\tilde X})=0$ and since $h^2(\tilde X, \calO_{\tilde X}(C_\eta))=0$, by Riemann--Roch theorem we have
$$
h^0(\tilde X, \calO_{\tilde X}(C_\eta))\geq \chi(\calO_{\tilde X}(C_\eta))= \frac {C_\eta(C_\eta-K_{\tilde X})}2=2,
$$
so that $C_\eta$ is effective for all $\eta\in {\rm Pic}^0(\tilde X)$. Moreover the curves
$C_\eta$  are nef and big as well as $C$, so they are 1--connected (see \cite[Lemma 2.6] {ML}). 

Consider the exact sequence 
$$
0\longrightarrow \eta \longrightarrow \calO_{\tilde X}(C)\longrightarrow \calO_{C_\eta}(C)\longrightarrow 0,
$$
with $\eta$ non--trivial. We claim that $h^i(\tilde X, \eta)=0$, for $0\leq i \leq 2$. Indeed, 
$h^0(\tilde X, \eta)=0$ is obvious and $h^2(\tilde X, \eta)=h^0(\tilde X, K_{\tilde X}-\eta)=0$, because $C\cdot  (K_{\tilde X}-\eta)=-1$ and $C$ is nef. Since $\chi(\eta)
=0$, the assertion follows. As 
$h^0(C_\eta, \calO_{C_\eta}(C))=3$,  we have a base point free $g_3^2$ on $C_\eta$ that is a 1--connected curve of genus $2$. This implies that $C_\eta$ cannot be irreducible. Let us pick up a point $x$ on an irreducible component of $C_\eta$ on which the $g_3^2$ has positive degree. Imposing to the divisors of the $g_3^2$ to contain that point, we get a residual $g^1_2$ on $C_\eta$ that has non--negative degree on any irreducible component of $C_\eta$. 

Let $\calN$ be the line bundle corresponding to the above $g^1_2$. 

\begin{claim} \label {cl:0} We can choose the point $x$ as above so that there is an irreducible component $Z$ of $C_\eta$ such that $\deg(\calN_{|Z})=0$. 
\end{claim}

\begin{proof} [Proof of Claim \ref {cl:0} ] If for any irreducible component $Z$ of $C_\eta$, one has $\deg(\calN_{|Z})>0$, since $C_\eta$ is reducible and $\deg(\calN)=2$, we have that $C_\eta$ splits in two irreducible components $Z_1+Z_2$ such that $\deg (\calN_{|Z_i})=1$, for $1\leq i \leq 2$.  
We cannot have $Z_1=Z_2$ since $C_\eta^2=3$. So $Z_1\neq Z_2$. Suppose that the point $x$  sits on $Z_1$. Then the original $g^2_3$ has degree 2 on $Z_1$ and degree 1 on $Z_2$, so that we could have chosen $x$ on $Z_2$, proving the Claim.\end{proof}

\begin{claim}\label{cl:3} The curve $C_\eta$ is not 2-connected.
\end{claim}

\begin{proof} [Proof of the Claim \ref {cl:3}] Suppose, by contradiction, that $C_\eta$ is  2-connected.

We can apply  Proposition (A.5) of \cite {CFM} to the line bundle $\calN$. 
If $C_\eta$ is 2-connected, then we have a decomposition $C_\eta=A+B$, with $A$, $B$ effective (and a priori depending on $\eta$), $A\cdot B= 2$ and $\calN_{|A}=\calO_A(B)$ (so that $\deg(\calN_{|A})=2$) and $\deg(\calN_{|B})=0$. Moreover, since $p_a(C_\eta)=2$, we have $p_a(A)+p_a(B)=1$ and $p_a(A), p_a(B)$ are  non--negative because $A$ and $B$ are both $1$--connected by \cite [Lemma (A.4)]{CFM} so either $p_a(A)=0, p_a(B)=1$ or $p_a(A)=1, p_a(B)=0$. 

Assume first $p_a(A)=0, p_a(B)=1$. Then the point $x$ can a priori lie either on $A$ or on $B$. Suppose first that it lies on $A$. Then  the original $g^2_3$ is concentrated on $A$ and trivial on $B$, hence $C\cdot B=0$. Now $|C-B|$ is a linear pencil, that, by the above argument, consists of rational curves, and this is a contradiction. So the point $x$ has to lie on $B$ and $C\cdot B=1$, $C\cdot A=2$. 
Since $3=(C_\eta)^2 = A^2 + B^2 +4$, we must have $A^2+B^2 = -1$.
Intersecting $C_\eta$ with $A$ we find that $2=A^2+ A\cdot B = A^2 +2$ so that $A^2=0$ and $B^2 = -1$. 
We conclude also that $A$ is a fiber of the Albanese pencil of $\tilde X$.

Since $C\cdot B=1$, there is an irreducible, rational component $\theta$ of $B$ such that $C\cdot \theta=1$ and $C\cdot (B-\theta)=0$. The curve $\theta$, being rational, sits in a fibre of the Albanese pencil of $\tilde X$. However, $\theta$ cannot be a fibre of the Albanese pencil, because $C\cdot \theta=1$ and $C$ has genus $2$. So, by Claim \ref {cl:due}, $\tilde X$ is the blow--up $f: \tilde X\longrightarrow X'$, of a minimal elliptic ruled surface $X'$ at a point $y$, obtained by contracting $\theta$. Let us denote by $C'$, $A'$, $B'$ the images of $C, A, B$ via $f$ on $X'$. 
Since $A\cdot \theta =0$, we have that $(A')^2=0$, and $(A')\cdot (B') = 2$.
We have 
$C'\equiv A'+B'$, 
$(C')^2=4$, and hence $(A')^2 + (B')^2 = 0$;
therefore $(A')^2=(B')^2 = 0$ and $A'$, $B'$ are still $1$--connected (this is a trivial computation, see \cite [\S 3] {F}). 

We have $X'=\mathbb P(\calE)$, where $\calE$ is a normalized rank 2 vector bundle on a smooth elliptic curve $G$, with invariant $e\geq -1$  and $e\geq 0$ if $\calE$ is decomposable (see \cite [Thm. 2.12, p. 376 and Thm. 2.15, p. 377]{har}).  Let us denote by $E$ a section with $E^2=-e$ and by $F$ a curve of the Albanese pencil of $X'$. Then $K_{X'}\equiv -2E-eF$. We have $C'\equiv aE+bF$, with $a,b$ suitable integers and $a\geq 2$. Imposing that
$$
4=(C')^2=a(2b-ae),
$$
we find $a=2, b=e+1$. Then $1-e= C'\cdot E\geq 0$, implies $e\leq 1$.

Next we write $B'=cE+dF$, with $c,d$ suitable integers with $c>0$. We have
$$
0 = (B')^2=c(2d-ce)
$$
so that $2d = ce$. 
%Moreover $0=B'\cdot (B'+K_{X'})$ reads
%$$
%2d=c(2d+e-ce).
%$$
Taking into account that, since $p_a(A)=0$, one has $A'\equiv F$ and we have
$$
2E+(e+1)F\equiv C'=A'+B'=cE+(d+1)F,
$$
so that $c=2$ and $d=e$.
Recall that $e\leq 1$. 

Assume first $e=-1$, so that $\calE$ is indecomposable 
and $X'$ is the symmetric square of the elliptic curve $\Gamma$, 
i.e., $X'\cong \Gamma(2)$, 
the curves $E\cong \Gamma$ are the \emph{coordinate curves} 
of the form $E_x=\{x+y\}_{y\in \Gamma}$ for all $x\in \Gamma$ 
and the fibres of the Albanese pencil are the curves $F$ 
described by all degree $2$ divisors $x+y$ that vary in a $g^1_2$ on $\Gamma$. 
In this case $C'\equiv 2E$. 
Now $|C'|$ has dimension $2$  
and it has no base points (see \cite [Thms. (1.17) and (1.18)] {CC}). 
This implies that the linear subsystem of $|C'|$ 
consisting of the curves in $|C'|$ containing the point $y$ 
has dimension $1$ and therefore $|C|$ cannot have dimension $2$, 
so that this case is excluded: $e \neq -1$. 

Assume next that $e=0$. 
Then $B'\equiv 2E$ which is not $1$--connected, because $E^2=0$; 
so this case is not possible. 
Similarly, in the case $e=1$ we must have $B'=2E+F$ which is not $1$--connected 
because $E\cdot (E+F)=0$, so also this case is also not possible.

This finishes the analysis when $p_a(A)=0$ and $p_a(B)=1$.

Assume next that $p_a(A)=1, p_a(B)=0$. 
Again the point $x$ can a priori lie either on $A$ or on $B$. 
Suppose first that it lies on $A$. 
Then  the original $g^2_3$ is concentrated on $A$ and trivial on $B$. 
Hence $C\cdot B=0$ so that $B^2<0$. 
Since $0=C\cdot B=A\cdot B+B^2=2+B^2$, 
we have $B^2=-2$ and $C\cdot A=3$ so that $A^2=1$.  
So $B$ should be a $1$--connected, rational curve with $B^2=-2$, 
and $B$ should be contained in a fibre of the Albanese pencil;
this is a contradiction to Claim \ref {cl:due}. 

We may assume then that $x$ lies on $B$ 
so that $C\cdot A=2$ and $C\cdot B=1$ (hence $B^2 = -1$).
Therefore  by Claim \ref{cl:due}, $B$ is an irreducible $(-1)$-curve 
and $\tilde X$ is not minimal; 
we contract it to $X'$ as above (and use the same notation as above), 
and $B$ is contracted to a point $y\in X'$.
The contractions $C'$ and $A'$ of $C$ and $A$ are now homologous;
we have $(C')^2 =4$.

Assume first $e=-1$. 
As we have seen above,  in this case we have $\dim(|C'|)=2$ 
and $|C'|$ has no base points. 
Then the linear subsystem of $|C'|$ consisting of the curves in $|C'|$ containing $y$ 
has dimension $1$ and therefore $|C|$ cannot have dimension $2$, 
so that this case is excluded. 

Let $e=0$ and assume first that $X'=\bbP(\calE)$, 
with $\calE$ the unique indecomposable rank $2$ vector bundle 
with invariant $e=0$ on an elliptic curve $\Gamma$. 
Then $X'$  has a section $E\cong \Gamma$ such that $\calO_E(E)=\calO_E$. 
We denote by $F$ the fibre class of the structure morphism $f: X'\longrightarrow \Gamma$.

The linear system $|C'|$ is of the form $|2E+F|$. 
We claim that $|2E+F|$ has dimension $2$ 
and has a base point scheme of length $2$ with support on $E$. 
Since $(C')^2 =4$, this will prove that this case cannot happen. 

 Indeed, let us  consider the exact sequence
$$
0\longrightarrow \calO_{X'}(F)\longrightarrow \calO_{X'}(E+F))\longrightarrow \calO_E(E+F) \longrightarrow 0.
$$
We have $h^0(X',\calO_{X'}(F))=1$ and $h^1(X',\calO_{X'}(F))=0$.
Since $\deg(\calO_E(E+F))=1$, we have 
$h^0(E, \calO_E(E+F))=1$ and $h^1(E, \calO_E(E+F))=0$.
As a consequence we have $h^0(X',  \calO_{X'}(E+F)))=2$ and 
$h^1(X',  \calO_{X'}(E+F)))=0$.

Look now at the exact sequence
$$
0\longrightarrow \calO_{X'}(E+F)\longrightarrow \calO_{X'}(2E+F))\longrightarrow \calO_E(2E+F) \longrightarrow 0.
$$
We have seen that $h^0(X',  \calO_{X'}(E+F)))=2$ and 
$h^1(X',  \calO_{X'}(E+F)))=0$. 
Since $\deg(\calO_E(2E+F))=1$, we have  $h^0(E, \calO_E(2E+F))=1$. 
Hence we deduce that  $h^0(X',  \calO_{X'}(2E+F)))=3$, 
i.e., $|2E+F|$ has dimension $2$. 
Moreover both $|2E+F|$ and $|E+F|$ have the same base point $p\in E$ 
cut out on $E$ by $F$. 
Moreover, $E+|E+F|$ is a pencil contained in $|2E+F|$ 
that has $p$ as a base point of multiplicity $2$. 
This implies that the base point scheme of $|2E+F|$ at $p$ has length $2$, as claimed. 

This shows that when $e=0$, $\calE$ cannot be indecomposable.

If in the above set--up $e=0$ and $\calE$ is decomposable, 
then we have at least two distinct sections $E$ ands $E'$ of $X'$ 
with $E^2=(E')^2=0$ and $E\cdot E'=0$. 
The linear system $|C'|$ is again of the form $|2E+F|$ 
and it intersects both $E$ and $E'$ in one point, 
so it has at least two distinct base points, 
and this implies that $|C|$ cannot determine a morphism of $\tilde X$ 
of degree $3$ to $\bbP^2$.

This finishes the analysis when $e=0$.

Finally, let $e=1$. 
In this case $X'=\bbP(\calO_\Gamma\oplus \calO_\Gamma(-p))$, 
with $p\in \Gamma$, 
and $X'$ has two distinct sections  $E$ and $E'$ 
with $E\cdot E'=0$, $E^2=-1$ and $(E')^2=1$. 

The linear system $|C'|$ is of the form $|2E+2F|$ and $E\cdot (2E+2F)=0$. 
So, if $\calO_E(2E+2F)$ is non--trivial, 
$E$ is a fixed component of $|C'|$ and we have a contradiction. 
Therefore $\calO_E(2E+2F)\cong \calO_E$.

Consider now the exact sequence
$$
0\longrightarrow \calO_{X'}(2F)\longrightarrow \calO_X(E+2F))\longrightarrow \calO_E(E+2F) \longrightarrow 0.
$$
We have $h^0(X',\calO_{X'}(2F))=2$ and $h^1(X',\calO_{X'}(2F))=0$.
Since $\deg(\calO_E(E+2F))=1$, we have 
$h^0(E, \calO_E(E+F))=1$ and $h^1(E, \calO_E(E+F))=0$.
As a consequence we have $h^0(X',  \calO_{X'}(E+2F)))=3$ and 
$h^1(X',  \calO_{X'}(E+2F)))=0$.

Look now at the exact sequence
$$
0\longrightarrow \calO_{X'}(E+2F)\longrightarrow \calO_{X'}(2E+2F))\longrightarrow \calO_E \longrightarrow 0.
$$
We have seen that $h^0(X',  \calO_{X'}(E+2F)))=3$ and 
$h^1(X',  \calO_{X'}(E+2F)))=0$. 
Hence we deduce that  $h^0(X',  \calO_{X'}(2E+2F)))=4$. 
Let $C'\in |2E+2F|$ that is smooth and irreducible of genus $2$. 
The linear series cut out by $|2E+2F|$ on $C'$ is a $g^2_4$ 
that is therefore base point free. 
Consider the morphism $\varphi_{|2E+2F|}: X'\longrightarrow \bbP^3$. 
This morphism cannot be birational, because, if it were, 
its image $Z$ would be a quartic surface 
with general plane sections of geometric genus $2$, 
so that $Z$ would have a line $t$ of double points. 
But then the planes through $t$ would cut out on $Z$ off $t$ a pencil of conics. 
These conics cannot be irreducible, 
because in that case $Z$, and therefore also $X'$, 
would be rational, a contradiction. 
This implies that the conics are all reducible, i.e., $Z$ is a scroll, 
and precisely an elliptic scroll. 
But then it is not possible that the general plane section of $Z$ has geometric genus $2$. 
So the image $Z$ of $\varphi_{|2E+2F|}$ is a quadric 
and  $\varphi_{|2E+2F|}: X'\longrightarrow Z$ is a double cover. 
But then this implies that $|C|$ cannot determine a morphism 
of $\tilde X$ of degree $3$ to $\bbP^2$.
  
This ends the analysis of the final case where $e=1$,
and finishes the proof of Claim \ref {cl:3}.
\end{proof}

So $C_\eta$ is not $2$--connected and we have a decomposition $C_\eta=A+B$, 
with $A$, $B$ effective and $A\cdot B=1$, 
so that $A,B$ are both $1$--connected by \cite [Lemma (A.4)]{CFM}. 
By \cite [Lemma (2.6)]{ML}, 
we may assume that either $A^2=-1$ or $A^2=0$. 
To finish the proof we will show that neither of these possibilities for $A^2$ can occur.

\begin{claim}\label{cl:4} The case  $A^2=-1$ cannot occur.
\end{claim}

\begin{proof} [Proof of Claim  \ref {cl:4}] 
If $A^2=-1$ then $C\cdot A=A^2+A\cdot B=0$ 
and $3=C^2=A^2+2A\cdot B+B^2=1+B^2$, so that $B^2=2$. 
Moreover $-1=K_{\tilde X}\cdot A+K_{\tilde X}\cdot B$. 
Since $C$ intersects positively all $(-1)$--divisors on $\tilde X$, 
$K_{\tilde X}\cdot A$ cannot be negative, so $K_{\tilde X}\cdot A>0$. 
We claim that $K_{\tilde X}\cdot A=1$ and $K_{\tilde X}\cdot B=-2$, 
so that $p_a(A)=p_a(B)=1$. 
Indeed we have $p_a(A)+p_a(B)=2$ and $p_a(B)\geq 1$ 
because otherwise $B$ would move in a $3$--dimensional family of rational curves, 
which is impossible. 
Hence $K_{\tilde X}\cdot B\geq -2$ which implies the assertion. 

Suppose first that $\tilde X$ is minimal. 
Then, as usual by now,  $\tilde X=\bbP(\calE)$, 
where $\calE$ is a normalized rank $2$ vector bundle on a smooth elliptic curve $\Gamma$, with invariant $e\geq -1$.  
Let us denote by $E$ a section with $E^2=-e$ 
and by $F$ a curve of the Albanese pencil of $\tilde X$. 
Then $K_{\tilde X}\equiv -2E-eF$. 
We have $C\equiv uE+vF$, with $u,v$ suitable integers and $u\geq 2$. 
Imposing $C^2=3$ we find $u(2v-ue)=3$, 
which forces $u=3$ and $v=(3e+1)/2$ (and therefore $e$ odd).
Imposing $C\cdot E\geq 0$, we find $3e\leq 1$, hence $e=-1$. 
Now let us put $A\equiv aE+\lambda F$, $B\equiv (3-a)E+\mu F$, 
with $a,\lambda, \mu$ suitable integers and $1\leq a\leq 2$. 

Suppose first $a=2$. 
Imposing $A\cdot B=1$, we find $2\mu+\lambda=-1$. 
Imposing $p_a(A)=1$, i.e., $A\cdot (A+K_{\tilde X})=0$, 
we find $\lambda =-1$, hence $\mu=0$. 
But then we would have $2=B^2=E^2=1$, a contradiction.

Suppose then $a=1$. 
Imposing $A\cdot B=1$, we find $2\lambda+\mu=-1$. 
Then $-1=A^2=(E+\lambda F)^2=1+2\lambda$, hence $\lambda=-1$. 
Therefore $\mu=1$ and we have $2=B^2=(2E+F)^2=8$, giving a contradiction again. 
The conclusion is that $\tilde X$ cannot be minimal.

Suppose next that $\tilde X$ is not minimal. 
Hence we have a blow--up $f: \tilde X\longrightarrow X'$, 
of a minimal elliptic ruled surface $X'$ at a point $y$. 
We let $C', A', B'$ as usual be the images of $C,A,B$ on $X'$ 
(with $A',B'$ both $1$--connected because $A$ and $B$ are) 
and we let $\gamma$ be the multiplicity of $C'$ at $y$. 
With the usual conventions, we can write $C'\equiv rE+sF$, 
with $r, s$ integers such that $r\geq 2$.
Imposing that $(C')^2=3+\gamma^2$, we get $r(2s-re)=3+\gamma^2$. 
Imposing that the geometric genus of $C'$ is $2$, 
we get the relation $\gamma^2+3=r(\gamma+1)$. 
Considering this as a degree $2$ equation in $\gamma$,
we see that the discriminant is $(r+6)(r-2)=(r+2)^2-16$.
We need this to be the square of a non--negative number $\delta$, 
hence we need $(r+2)^2-\delta^2=16$, i.e., $(r+2+\delta)(r+2-\delta)=16$. 
Notice that we cannot have $r+2-\delta=1$ and $r+2+\delta=16$ 
because, summing up these two relations, we get $2(r+2)=17$, 
which is impossible. So we have
$$
r+2+\delta=2^\alpha, \quad r+2-\delta=2^\beta
$$
so that $\alpha+\beta=4$ and $3\geq \alpha\geq \beta\geq 1$. 
For $\alpha=\beta=2$ we get $r=2, \delta=0, \gamma=1, s=e+1$ 
and for $\alpha=3, \beta=1$ we get $r=\delta=3$ and $\gamma$ is either $0$ or $3$. 
In the latter case $s= (3e+1)/2$ if $\gamma=0$ or $s=(3e+4)/2$ if $\gamma=3$.
When we impose that $C'\cdot E\geq 0$ we get 
$e\leq 1$ in the $r=2$ case and in the $r=\gamma=3$ case
and  $e\leq 0$  in the $r=3, \gamma=0$ case.
If $r=\gamma=3$, $e$ must be even in order that $s$ is an integer, hence $e=0$.
Similarly if $r=3$ and $\gamma=0$, $e$ must be odd, forcing $e= -1$.

Suppose we are in the case $r=2$, i.e., $C'\equiv 2E+(e+1)F$ and $-1\leq e\leq 1$.

First let $e=-1$. 
Then $C'\equiv 2E$ and $|C'|$ has dimension $2$ with no base points 
(see \cite [Thms. (1.17) and (1.18)] {CC}). 
This implies that the linear subsystem of $|C'|$ 
consisting of the curves in $|C'|$ containing the point $y$ has dimension $1$ 
and therefore $|C|$ cannot have dimension $2$, so that this case is excluded. 
 
Let $e=0$ and $\calE$ indecomposable so that $\calO_E(E)=\calO_E$. 
Then $C'\equiv 2E+F$ so that $C'\cdot E=1$ and $|C'|$ has a base point $z\in E$. 
 
First of all, let us consider the linear system $|E+F|$. 
We claim that $\dim (|E+F|)=1$ and that this linear system 
has the same base point $z$ on $E$. 
In fact, consider the exact sequence
 $$
 0\longrightarrow \calO_{X'}(F)\longrightarrow \calO_{X'}(E+F)
\longrightarrow \calO_{E}(E+F)\longrightarrow 0.
 $$
 As we saw already, $h^0( X',  \calO_{X'}(F))=1$ and $h^1( X',  \calO_{X'}(F))=0$, whereas $h^0( E,  \calO_{E}(E+F))=1$ and $h^1( E,  \calO_{E}(E+F))=0$, 
so that $h^0(X', \calO_{X'}(E+F))=2$ and $h^1(X', \calO_{X'}(E+F))=0$. 
 
Similarly, looking at the exact sequence 
 $$
 0\longrightarrow \calO_{X'}(E+F)\longrightarrow \calO_{X'}(2E+F)
\longrightarrow \calO_{E}(2E+F)\longrightarrow 0
 $$
 we see that $h^0(X', \calO_{X'}(2E+F))=3$ and $h^1(X', \calO_{X'}(2E+F))=0$. 
Hence $\dim (|2E+F|)=2$ and, as we said, $|2E+F|$ has a base point $z$ on $E$.
 
 Now notice that  $|2E+F|$ contains the linear subsystem $E+|E+F|$ of dimension $1$, 
that has $z$ as a base point which is singular for all curves of the system. 
This implies that $|2E+F|$ has a base point infinitely near to $z$. 
Hence $|C'|$ has two base points (i.e., $z$ and the infinitely near base point) 
and therefore it cannot define a degree $3$ rational map to $\bbP^2$. 
 
 Next let $e=0$ and $\calE$ decomposable 
so that 
$\calE\cong \calO_\Gamma\oplus \calO_\Gamma(\mathfrak e)$,
where $\mathfrak e\in {\rm Pic}^0(\Gamma)$. 
Note that we have here the section $E$ such that $\calO_E(E)=\mathfrak e$ 
and also a distinct section $E'$ such that $\calO_{E'}(E')=\mathfrak e^\vee$, 
with $E\cap E'=\emptyset$.  
Again $C'\equiv 2E+F$ so that $C'\cdot E=C'\cdot E'=1$ 
and $|C'|$ has two base points $z\in E$ and $z'\in E'$. 
Again $|C'|$ has two base points (i.e., $z$ and $z'$) 
and therefore it cannot define a degree $3$ rational map to $\bbP^2$.  

Let $e=1$, so that $\calE\cong \calO_\Gamma\oplus \calO_\Gamma(-p)$, 
where $p\in \Gamma$ and $\calO_E(E)=\calO_E(-p)$, 
where we identify $E$ with $\Gamma$ (see \cite [Thm. 2.12, p. 376]{har}). 
We have $C'\equiv 2(E+F)$. 
Let us fix the fibre $F_q$ of the Albanese pencil on $X'$ 
such that $F_q\cap E=\{q\}$, and consider the exact sequence
\begin{equation}\label{eq:worp}
0\longrightarrow \calO_{X'} (E+2F_q)\longrightarrow \calO_{X'} (2E+2F_q)
\longrightarrow \calO_{E} (2E+2F_q)\longrightarrow 0.
\end{equation}
So  
$\calO_E(E+F_q)=\calO_E(q-p)$ and
$\calO_E(2(E+F_q))=\calO_E(2q-2p)$. 
Moreover\\  $h^i(X', \calO_{X'} (E+2F_q))=0$ for $i=1,2$, 
because $E+2F_q-K_{X'}\equiv 3E+3F_q$ is big and nef;
hence the assertion follows by the Kawamata--Viehweg theorem. 
Note that $\chi(\calO_{X'} (E+2F_q))=3$.
%we have $h^0( X', \calO_{X'} (2E+2F_q))=3$. 

If $\calO_E(q-p)$ is not of $2$--torsion on $E$,
the line bundle $\calO_{E} (2E+2F_q)$ is non--trivial, 
and therefore $E$ is a fixed component of $|2E+2F_q|$; 
then the curves in $|2E+2F_q|$ are all reducible 
and therefore also the curves in $|2E+2F_q|$ that contain $y$ are all reducible, 
which implies that $|C'|$ cannot be of the form $|2E+2F_q|$ as above. 

Hence is must be the case that $\calO_E(q-p)$ is of $2$--torsion on $E$. 
Since, as we saw, $h^1(X', \calO_{X'} (E+2F_q))=0$, 
from the sequence \eqref {eq:worp}, 
we have that $h^0( X', \calO_{X'} (2E+2F_q))=4$ in this case 
and $E$ is no longer a fixed component of $|2E+2F_q|$. 

We claim that $|2E+2F_q|$ has no base point. 
Let $x$ be a point of $E$  and assume first that $x\neq p$. 
Then look at the exact sequence
\begin{equation}\label{eq:rob}
0\longrightarrow \calO_{X'}(2E+2F_q-F_x)\longrightarrow \calO_{X'}(2E+2F_q) \longrightarrow \calO_{F_x}(2E+2F_q)\longrightarrow 0.
\end{equation}
We have $E\cdot (2E+2F_q-F_x)=-1$ so that $E$ splits off from $|2E+2F_q-F_x|$. 
Moreover $\calO_{E}(E+2F_q-F_x)\cong \calO_E(-p+2q-x)\cong \calO_E(p-x)$ 
which is non--trivial on $E$ 
so that $E$ splits off twice from $|2E+2F_q-F_x|$ 
and the residual system is $|2F_q-F_x|$ that has dimension $0$. 
So from \eqref {eq:rob} we see that the restriction map 
$H^0( X',\calO_{X'}(2E+2F_q)) \longrightarrow H^0(F_x,\calO_{F_x}(2E+2F_q))$ 
is surjective so that $|2E+2F_q|$ has no base point on $F_x$. 
We are left to check that $|2E+2F_q|$ has no base points on $F_p$. 
Consider the exact sequence 
\begin{equation}\label{eq:roba}
0\longrightarrow \calO_{X'}(F_p)\longrightarrow \calO_{X'}(E+F_p) \longrightarrow \calO_{E}(E+F_p)\cong \calO_E\longrightarrow 0.
\end{equation}
As usual $h^i(X', \calO_{X'}(F_p))=0$, for $i=1,2$, 
whereas $h^0(X', \calO_{X'}(F_p))=1$. 
Form \eqref {eq:roba}, we see that $|E+F_p|$ has dimension $1$, 
it does not have $E$ as a fixed component, 
its general curve is smooth and irreducible 
and $h^1(X', \calO_{X'}(E+F_p))=1$. 
The pencil $|E+F_p|$, that has self--intersection $1$, 
has a base point on $F_p$. 
Indeed, look at the exact sequence
\begin{equation*}\label{eq:robb}
0\longrightarrow \calO_{X'}(E)\longrightarrow \calO_{X'}(E+F_p) 
\longrightarrow \calO_{F_p}(E+F_p)\cong \calO_{\bbP^1}(1)\longrightarrow 0.
\end{equation*}
Since $h^0(X', calO_{X'}(E+F_p))=2$ and $h^0(X', \calO_{X'}(E))=1$,
we see that the restriction map 
$$
H^0(X', \calO_{X'}(E+F_p))\cong \bbC^2\longrightarrow H^0(F_p, H^0(F_p, \calO_{F_p}(E+F_p))\cong \bbC^2
$$
is not surjective, 
and this proves that $|E+F_p|$ has a base point $u$ on $F_p$. 
Notice that $u$ is not the intersection point of $F_p$ with $E$, 
because $E\cdot (E+F_p)=0$. 
Since $2E+2F_q\sim 2E+2F_p$, because $2p\sim 2q$ on $E$, 
the point $u$ can be the only base point of $|2(E+F_q)|$ on $F_p$. 
But now we see that $u$ is not a base point for $|2(E+F_q)|$.
 Let in fact $D\in |E+F_p|$ be a general curve, 
that is smooth, irreducible and passes through $u$. 
Consider the exact sequence 
\begin{equation}\label{eq:robc}
0\longrightarrow \calO_{X'}(E+F_p)\longrightarrow \calO_{X'}(2E+2F_q) \longrightarrow \calO_{D}(2E+2F_q)\longrightarrow 0.
\end{equation}
We have $h^0(X',  \calO_{X'}(E+F_p))=2$, $h^1(X',  \calO_{X'}(E+F_p))=1$, 
$h^0(X',  \calO_{X'}(2E+2F_q))=4$, 
$h^1(X',  \calO_{X'}(2E+2F_q))=1$ 
and $h^0(D,  \calO_{D}(2E+2F_q))=2$, $h^1(D,  \calO_{D}(2E+2F_q))=0$. 
Hence from \eqref {eq:robc} we deduce that the restriction map 
$$
H^0(X', \calO_{X'}(2E+2F_q))\longrightarrow H^0(D, H^0(D, \calO_{D}(2E+2F_q))
$$
is surjective, and this proves that $|2E+2F_q|$ has no base point on $D$, 
and in particular $u$ is not a base point for $|2E+2F_q|$. 

In conclusion $\varphi_{|2E+2F_q|}: X'\longrightarrow \Sigma$ is a morphism 
that maps $X'$ to a non--degenerate surface $\Sigma$ in $\bbP^3$. 
This map cannot be birational. 
Indeed, if is was birational, $\Sigma$ would be a quartic surface 
with general plane sections of genus $2$, 
so $\Sigma$ would have a double line $r$. 
This can be excluded with an argument that we already made above.
So $\varphi_{|2E+2F_q|}$ has degree $2$ and $\Sigma$ is a quadric. 
But then the sublinear system of  curves in $|2E+2F_q|$ containing the point $y$ 
cannot define a triple cover of $\bbP^2$. 

This finishes the case with $r=2$, $s=e+1$, and $\gamma=1$.
Let us turn our attention to the other two cases, which have $r=3$.

If $\gamma=0$, then we have $s=-1$ and $e=-1$; 
let us deal with this case next.
Here $C\equiv 3E-F$ and 
$X'$ is the symmetric product of a genus one curve $\Gamma$.
By \cite[Theorem 1.17]{CC}, we have that $H^0(X',C)=2$, 
and this is a contradiction to giving us the triple cover map.

Finally we have the case $r=3$, $\gamma =3$, $e=0$, and $s=2$.
In this case we have that $C$ has a triple point $p$, 
and we make an elementary transformation of the elliptic ruled surface $X'$ at $p$.
We note that $p \notin E$, because $C\cdot E =2$;
for the same reason, no other section (if it exists) of self-intersection zero
can contain $p$.
We also note that every section of the ruled surface has even self-intersection.
After making the elementary transformation, 
we obtain a minimal elliptic ruled surface $X''$ 
and $E$ transforms to a section $E''$ with self-intersection one.
Therefore $E''$ is a section with minimal self-intersection on $X''$;
hence $X''$ has invariant $e=-1$.

The proper transform $C''$ of the curve $C$ has numerical class $3E''-F$,
which has self-intersection $3$.
Again by \cite[Theorem 1.17]{CC}, 
we have that $H^0(X'',C'')=2$, 
and this is a contradiction to giving us the triple cover map.
\end{proof}

\begin{claim}\label{cl:5} 
The case  $A^2=0$ cannot occur.
\end{claim}

\begin{proof} [Proof of Claim \ref {cl:5}] 
If $A^2=0$, then  $C\cdot A=A^2+A\cdot B=1$ 
and $3=C^2=A^2+2A\cdot B+B^2=2+B^2$, 
so that $B^2=1$. 
Again $-1=K_{\tilde X}\cdot A+K_{\tilde X}\cdot B$. 
If $K_{\tilde X}\cdot A<0$, 
we would have that  $p_a(A)=0$ and $A$ moves in the Albanese pencil. 
This is impossible because $C\cdot A=1$ and then $C$ would have genus $1$, 
instead of genus $2$. 
So $K_{\tilde X}\cdot A\geq 0$. 
Similarly, $p_a(B)$ cannot be $0$, because, since $B^2=1$,
$B$ would move in a linear net of rational curves, 
so that $K_{\tilde X}\cdot B\geq -1$, 
and this implies that $K_{\tilde X}\cdot A= 0$ and $K_{\tilde X}\cdot B=-1$, 
so that $p_a(A)=p_a(B)=1$. 
Since $C\cdot A=1$, 
there is a $g^1_1$ on $A$ corresponding to a line bundle $\calN$. 
Then we apply again \cite [Proposition (A.5)]{CFM} 
and deduce that there is a decomposition $A=A_1+A_2$, 
with $A_1, A_2$ effective divisors,  $A_1\cdot A_2=1$ 
and $\calN_{|A_1}=\calO_{A_1}(A_2)$, 
so that $\deg(\calN_{|A_1})=1$, and $\deg(\calN_{|A_2})=0$. 

Consider the decomposition $C=A_2+(A_1+B)$. 
We have $1\leq A_2\cdot (A_1+B)=1+ A_2\cdot B$, 
hence $A_2\cdot B\geq 0$. 
If the equality holds, we have 
$A_2\cdot (A_1+B)=1$.
Note that $C\cdot A_2 =0$, and then $A_2^2 <0$ by the index theorem.
Moreover $0=A^2 = A_1^2+A_2^2+2$, hence $A_1^2+A_2^2 = -2$.
Since $A_2^2 <0$, we have $A_1^2 \geq -1$.
Then $(A_1+B)^2 = A_1^2+3 \geq 2$, 
which implies (by \cite [Lemma (2.6)]{ML})
that $A_2^2 = -1$, which already has been excluded by Claim \ref {cl:4}. 

Hence we may assume that $A_2\cdot B\geq 1$. 
Note that $p_a(A_2)>0$. 
(Indeed, if $p_a(A_2)=0$, since $A_2$ is $1$--connected by \cite [Lemma (A.4)]{CFM}, 
$A_2$ would be contained in a fibre of the Albanese pencil. 
Since $C\cdot A_2=0$, $A_2$ cannot be a fibre of the Albanese pencil, 
and it also cannot be a $(-1)$--curve. 
But by Claim \ref {cl:due} there are no other possibilities for $A_2$, 
so $p_a(A_2)>0$. )
Then we have $p_a(A_2+B)\geq 2$ and 
$$
2=p_a(C)=p_a(A_1+(A_2+B))\geq 2+A_1\cdot (A_2+B)-1=A_1\cdot (A_2+B)+1,
$$
so that $A_1\cdot (A_2+B)\leq 1$. 
On the other hand, since $C$ is $1$--connected, 
we have also $A_1\cdot (A_2+B)\geq 1$, 
so that $A_1\cdot (A_2+B)=1$, which implies that 
$A_2\cdot B=1$, $A_1\cdot B=0$ and $p_a(A_2)=1, p_a(A_1)=0$. 
Then $A_1$ is contained in a fibre of the Albanese pencil 
and cannot be a fibre of the Albanese pencil because $C\cdot A_1=1$. 
Then by Claim \ref {cl:due}, $A_1$ is a $(-1)$--curve, 
and then $A_2^2 = -1$ as well.  
We can then consider the morphism $f: \tilde X\longrightarrow X'$ 
that contracts $A_1$ to a smooth point $y$. 
Set $C'=f_*(C)$, $A'=f_*(A)=f_*(A_2)$, $B'=f_*(B)$. 
We have $(C')^2=4$, $(A')^2=0$, $(B')^2=1$, $C'\sim A'+B'$, with $A'\cdot B'=1$.
Then $4= (C')^2 = (A')^2+(B')^2+2(A'\cdot B') = 3$, a contradiction.
%
 %By the index theorem we get $A'\equiv B'$, so that $C'\equiv 2A'$. 
%
%As usual we have $X'=\mathbb P(\calE)$, where $\calE$ is a normalized rank 2 vector bundle on a smooth elliptic curve $G$, with invariant $e\geq -1$. We use then the same notation introduced in the proof of Claim \ref {cl:3}.
%We have $A'\equiv aE+bF$ with suitable integers $a,b$, with $a>0$. Imposing $(A')^2=1$ one gets $a(2b-ae)=1$, which implies $a=1$ and $2b=1+e$, so that $e$ is odd. Hence
%$$
%A'\equiv E+\frac {e+1}2F.
%$$
%Since $A$ is nef, we have
%$$
%0\leq A'\cdot E=-e+\frac {e+1}2=\frac {1-e}2
%$$
%so that $e\leq 1$, i.e., $e\in \{-1, 1\}$ and if $e=1$ then $\calE$ is decomposable (see again \cite [Thm. 2.12, p. 376 and Thm. 2.15, p. 377]{har}).
%
%Assume first $e=-1$, so that $\calE$ is indecomposable and $X'$ is the symmetric square of the curve $G$. In this case $A'\equiv E$ and therefore $C'\equiv 2E$. We have encountered already such a situation in the proof of Claim \ref {cl:4} and we have excluded it. 
%
%Assume next that $e=1$ so that $\calE$ is decomposable of the form $\calE\cong \calO_G\oplus \calO_G(-p)$, where $p\in G$ is a point. In this case $A\equiv E+F$ and therefore $C'\equiv 2(E+F)$.  Again, we have encountered already such a situation in the proof of Claim \ref {cl:4} and we have excluded it.  
\end{proof} 

As noted above, the proofs for the two Claims \ref{cl:4} and \ref{cl:5}
finish the proof of Proposition \ref{prop:deg=8notq=1}.
\end{proof}

\subsection{The rational case}

The only case left to consider is when the triple cover is a rational surface.

\begin{proposition}\label{prop:plane} 
Let $\pi: X\longrightarrow \bbP^2$ be a triple plane 
with branch curve of degree $8$ and $X$ rational. 
Let $\phi: \tilde X\longrightarrow X$ be the minimal desingularization of $X$ 
and let $\tilde {\calL}$ be the pull back via $\phi$ of $\calL$. 
Then  $\tilde X$ is the blow--up of $\bbP^2$ at $10$ points and, 
up to a Cremona map, $\tilde {\calL}$ is the linear system $|4;2, 1^9|$. 
\end{proposition}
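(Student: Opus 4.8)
The plan is to work throughout on the minimal desingularization $\tilde X$, where the general $C\in\tilde{\calL}$ is big and nef with $C^2=3$, $p_a(C)=2$ and (by adjunction, since $g=2$) $K_{\tilde X}\cdot C=-1$. Since $X$ is rational we have $\chi(\calO_{\tilde X})=1$, and $h^i(\tilde X,\calO_{\tilde X}(C))=0$ for $i>0$ by Kawamata--Viehweg (as $C$ is big and nef), so $h^0(\tilde X,\calO(C))=\chi(\calO(C))=3$; hence $\tilde{\calL}=|C|$ is the complete, base-point-free net defining $\pi\circ\phi$. The statement then splits into two tasks: first, to prove $K_{\tilde X}^2=-1$, equivalently that $\tilde X$ is $\bbP^2$ blown up at $10$ (possibly infinitely near) points; and second, to identify the numerical class of $C$ with $|4;2,1^9|$ up to a Cremona map.

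For the first task I would squeeze $K_{\tilde X}^2$ between $-1$ and $-1$. The lower bound is exactly the argument of Claim \ref{cl:uno}: $K_{\tilde X}+C$ is effective (the same Kawamata--Viehweg plus Riemann--Roch computation now gives $h^0(K_{\tilde X}+C)=\chi(K_{\tilde X}+C)=2$) and nef (a curve $\theta$ with $\theta\cdot(K_{\tilde X}+C)<0$ would be a component of $K_{\tilde X}+C$ with $\theta^2<0$, forcing $\theta$ to be a $(-1)$--curve with $C\cdot\theta=0$, against minimality). Hence $0\le(K_{\tilde X}+C)^2=K_{\tilde X}^2+1$, so $K_{\tilde X}^2\ge-1$. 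For the upper bound, suppose $K_{\tilde X}^2\ge0$. Since $\tilde X$ is rational, $h^2(\tilde X,-K_{\tilde X})=h^0(2K_{\tilde X})=0$, so Riemann--Roch gives $h^0(\tilde X,-K_{\tilde X})\ge1+K_{\tilde X}^2\ge1$ and $-K_{\tilde X}$ is effective. Restricting to a general $C_0\in|C|$, which meets the effective divisor $-K_{\tilde X}$ properly in the single point $q=(-K_{\tilde X})\cap C_0$ (as $-K_{\tilde X}\cdot C=1$), and using $(K_{\tilde X}+C)|_{C_0}=K_{C_0}$, one finds $\calO_{C_0}(C)\cong K_{C_0}+q$. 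On a genus $2$ curve such a degree $3$ bundle has $q$ as a base point, so the restriction of the base-point-free net $\calL$ to $C_0$ would have a base point, a contradiction. (Hodge index already yields $K_{\tilde X}^2\le0$ from $C^2\,K_{\tilde X}^2\le(C\cdot K_{\tilde X})^2=1$, but the restriction argument is what gives the sharp bound.) Therefore $K_{\tilde X}^2\le-1$, and combining, $K_{\tilde X}^2=-1$ with $\tilde X=\mathrm{Bl}_{10}\bbP^2$.

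I regard this restriction argument as the clean heart of the matter, not the obstacle; the real work is the second task. With $K_{\tilde X}^2=-1$ we get $(K_{\tilde X}+C)^2=0$, $K_{\tilde X}\cdot(K_{\tilde X}+C)=-2$ and $(K_{\tilde X}+C)\cdot C=2$, so $|K_{\tilde X}+C|$ is a base-point-free pencil of rational curves: it endows $\tilde X$ with a conic-bundle structure $g:\tilde X\to\bbP^1$ for which $C$ is a bisection, with $\tilde X$ a relatively minimal conic bundle (nine reducible fibres) blown up. The plan is to use this ruling to run an explicit birational reduction: performing elementary transformations along the fibres, while keeping track of $C$, I would bring the ruling to the standard pencil of lines through one point and simultaneously reduce $C$ to $4H-2E_0-E_1-\cdots-E_9$, for which indeed $K_{\tilde X}+C=H-E_0$ is exactly the ruling. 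The main obstacle is precisely this normalization: controlling the reducible fibres and checking that the accumulated elementary (Cremona) transformations land on $|4;2,1^9|$ rather than on some other numerically admissible class with $K_{\tilde X}^2=-1$. Here the invariants $C^2=3$, $K_{\tilde X}\cdot C=-1$ together with the identification $K_{\tilde X}+C=H-E_0$ are the bookkeeping that should force the claimed answer.
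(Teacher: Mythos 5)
Your proposal is correct in substance, and your route to $K_{\tilde X}^2=-1$ (write $K:=K_{\tilde X}$) is genuinely different from the paper's. The paper never looks at $-K$: it decomposes the adjoint system as $|K+C|=\Phi+|F|$ from the start, proves the general $F$ is rational by analyzing the degree-$2$ trace of $|C|$ on it (a $g^1_2$ trace would force the triple cover to factor through a degree-$2$ map), and then kills $\Phi$ cohomologically: the restriction $H^0(\calO_{\tilde X}(C))\to H^0(\calO_F(C))$ is an isomorphism, whence $h^i(\calO_{\tilde X}(\Phi-K))=0$ for all $i$, so $\chi(\calO_{\tilde X}(\Phi-K))=0$, i.e. $\Phi^2-K^2=1$; combined with $K\cdot\Phi=K^2+1$ this gives $\Phi^2=K\cdot\Phi$, and a nonzero $\Phi$ would yield a $(-1)$--curve orthogonal to $C$, against minimality. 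Then $\Phi=0$ forces $K^2=-1$. Your upper bound --- if $K^2\geq 0$ then $-K$ is effective by Riemann--Roch, so $\calO_{C_0}(C)\cong K_{C_0}(q)$ with $q$ the point $(-K)\cap C_0$, and $q$ is a base point of the trace $g^1_3$, a contradiction --- is a clean and valid alternative (the trace is the complete $|K_{C_0}(q)|$ by a dimension count that does not need completeness of $|C|$); your lower bound reproduces the paper's Claim \ref{cl:uno}. Arguably your argument is more direct, at the price of having to re-derive afterwards what the paper gets for free along the way ($\Phi=0$).

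Two repairs are needed. First, your justification ``$h^i(\tilde X,\calO_{\tilde X}(C))=0$ for $i>0$ by Kawamata--Viehweg, as $C$ is big and nef'' is a misapplication: the theorem applies to $K+D$ with $D$ big and nef, and $C-K$ is not known to be nef at that stage (it can meet a curve orthogonal to $C$ negatively). The conclusion $h^0(\calO_{\tilde X}(C))=3$ is nevertheless true, by the paper's elementary argument: if $h^0=r+1$, then $|C|$ cuts a $g^{r-1}_3$ on the general genus-$2$ member, and a $g^2_3$ on a genus-$2$ curve is impossible, so $r\leq 2$; fortunately your main arguments never actually use this. (The legitimate KV application is the one for $h^0(K+C)=2$, which you use correctly.) Second, you jump from the numerics $(K+C)^2=0$, $K\cdot(K+C)=-2$ to ``base-point-free pencil of rational curves'': you must rule out a fixed part and a composed pencil before writing $K+C\sim F$. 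Given $K^2=-1$ this is a short check: writing $K+C=\Phi+M$ with $M$ the moving (nef) part, nefness of $K+C$ and $(K+C)^2=0$ give $(K+C)\cdot\Phi=(K+C)\cdot M=0$, hence $M^2=\Phi\cdot M=\Phi^2=0$; by the index theorem $\Phi\equiv\lambda M$, and then $-2=K\cdot(K+C)=(1+\lambda)K\cdot M$ together with integrality of $p_a(M)$ forces $\lambda=0$, and the same parity argument excludes $M=2M'$.

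Finally, the ``main obstacle'' you identify is illusory. Once one invokes the standard fact (the paper cites \cite[Prop. 5.3.5]{C} for exactly this) that a base-point-free pencil of rational curves is Cremona-equivalent to the pencil $|1;1|=|H-E_0|$, no elementary-transformation bookkeeping is needed: since $|K+C|=|F|$ with no fixed part, $C\sim F-K=(H-E_0)+3H-\sum_{i=0}^{n}E_i=4H-2E_0-E_1-\cdots-E_n$, and $K^2=-1$ gives $n=9$, i.e. exactly the class $|4;2,1^9|$. This is precisely the paper's (terse) endgame, and it answers your worry that the reduction might land on some other numerically admissible class: the linear equivalence $C\sim F-K$ pins it down uniquely.
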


\begin{proof} 
As usual we abuse notation and denote also by $C$
the general curve in $\tilde {\calL}$, 
which is a smooth curve of genus $2$ with $C^2=3$;
hence $K_{\tilde X}\cdot C=-1$. 

The adjoint linear system $|K_{\tilde X}+C|$ has dimension $1$ 
and so we can write it as $|K_{\tilde X}+C|=\Phi+|F|$, 
where $\Phi$ is the fixed part and $|F|$ the movable part 
that is a linear pencil whose general curve is irreducible. 
We have $F\cdot C=2$ and $\Phi\cdot C=0$ 
because $|K_{\tilde X}+C|$ cuts out on the general curve $C \in \tilde {\calL}$ 
the complete canonical series, that is base point free. 

We claim that $|F|$ is a pencil of rational curves. 
If fact, if $F\in |F|$ is a general curve, 
$|C|$ cuts out a base point free $g_2^r$ on it, 
so that $r\geq 1$. 
However, $r=1$ is not possible, 
because otherwise $\phi_{|C|}$ would have degree $2$ onto it image, 
which is impossible. 
So $r=2$ and $F$ is rational (and smooth). 
Hence $F^2=0$ and $K_{\tilde X}\cdot F=-2$. 

Then we have 
$$
K_{\tilde X}^2-1=K_{\tilde X}\cdot (K_{\tilde X}+C)
=K_{\tilde X}\cdot F+K_{\tilde X}\cdot \Phi=-2+K_{\tilde X}\cdot \Phi
$$
hence $K_{\tilde X}\cdot \Phi=K_{\tilde X}^2+1$. 

Now look at the exact sequence
$$
0\longrightarrow \calO_{\tilde X}(\Phi-K_{\tilde X})\longrightarrow \calO_{\tilde X}(C)\longrightarrow \calO_{F}(C)\longrightarrow 0.
$$
As we saw, the map
$H^0(\tilde X, \calO_{\tilde X}(C))\longrightarrow H^0(F, \calO_{F}(C))\cong \bbC^3$ 
is surjective. 
On the other hand $h^0(\tilde X, \calO_{\tilde X}(C))=3$, 
because if we set $r+1:=h^0(\tilde X, \calO_{\tilde X}(C))$, 
the linear system $|C|$  cuts out on the general curve $C\in |C|$ a $g^{r-1}_3$ 
and of course $r\leq 2$. 
So  $H^0(\tilde X, \calO_{\tilde X}(C))\longrightarrow H^0(F, \calO_{F}(C))\cong \bbC^3$
is an isomorphism. 
This implies that  $h^0(\tilde X, \calO_{\tilde X}(\Phi-K_{\tilde X}))=0$.
By Riemann--Roch, it follows that $h^1(\tilde X, \calO_{\tilde X}(C))=0$;
hence also $h^1(\tilde X, \calO_{\tilde X}(\Phi-K_{\tilde X}))=0$. 
Since 
$h^2(\tilde X, \calO_{\tilde X}(\Phi-K_{\tilde X}))
=h^0(\tilde X, \calO_{\tilde X}(2K_{\tilde X}-\Phi))=0$, 
we have $\chi(\calO_{\tilde X}(\Phi-K_{\tilde X}))=0$ 
which is equivalent to $\Phi^2- K_{\tilde X}^2=1$.
As we have also $K_{\tilde X}\cdot \Phi=K_{\tilde X}^2+1$, 
it follows that $\Phi^2=K_{\tilde X}\cdot \Phi$. 

We claim now that $\Phi=0$. 
If not, since $C\cdot \Phi=0$, 
it is the case that $C\cdot D =0$ for all components of $\Phi$;
therefore $D^2<0$ for all such components.
Since $K_{\tilde X}\cdot \Phi<0$
(because $\Phi^2<0$ and $\Phi^2=K_{\tilde X}\cdot \Phi$), 
then there would be an irreducible component $D$ of $\Phi$ 
such that $D^2<0$ and $K_{\tilde X}\cdot D<0$. 
So $D$ would be a $(-1)$--curve such that $C\cdot D=0$, which is impossible. 

In conclusion $K_{\tilde X}^2=-1$ and $|K_{\tilde X}+C|=|F|$. 
So, up to a Cremona transformation, 
we may assume that $|F|$ is the linear system $|1;1|$ 
and therefore $\tilde {\calL}$ is a system $|4;2,1^9|$ as claimed. 
\end{proof}

If $X$ is the blow--up of the plane at ten general points, and 
$\pi: X\longrightarrow \bbP^2$ is the otp given by the map determined by the linear system $|4;2,1^9|$, then the Tschirnhausen vector bundle is 
$\calO_{\mathbb P^2}(-2)^{\oplus 2}$ 
and its branch curve has degree $8$;
it is irreducible and it has in general $12$ ordinary cusps 
(see \cite [Lemma 10.1 and Table 10.5]{M}). 
However, if the ten blown--up points are in special position 
so that there is some curve contracted by the map 
determined by the linear system $|4;2,1^9|$, 
the branch curve may acquire more singular points. 
We will not dwell on the description of all special cases here.

From the viewpoint of the structure constants for the triple cover
as in \eqref{triplecoverstructureconstants} below,
each of the polynomials $a,b,c,d$ has degree $2$;
the discriminant $D$ defining the branch curve is a homogeneous quartic in these polynomials, which gives the degree eight description as well.
We can also check the count of parameters for this case too.
The ten points in the plane have $20-8=12$ moduli.
The four quadratics have $24$ coefficients; there are $8$ automorphisms of $\bbP^2$,
and $4$ for the Tschirnhausen bundle, also giving $12$ moduli.

\begin{example}\label{ex:klo} 
There are various geometric ways 
to look at rational triple planes with branch curve of degree $8$. 
For example, there are quintic surfaces $S$ in $\bbP^3$ 
with a singular curve $Y$ that is an elliptic quartic curve (see \cite [\S 32]{Co}). 
They are rational surfaces with sectional genus $2$. 
The projection of such a surface $S$ from a general point $x\in Y$, 
determines an otp $\pi: X\longrightarrow \bbP^2$ of the required type. 
By taking into account the plane representation of the surfaces $S$ 
(as explained in \cite [\S 32]{Co}), 
the reader will see that these triple planes 
are of the type described in Proposition \ref {prop:plane}. 
We do not dwell on this here. 
\end{example}

\section{Branch curve of degree 10}\label{sec:10}

Consider next a triple plane $\pi: X\longrightarrow \bbP^2$ 
such that its branch curve has degree $10$. 
Such a triple plane is certainly ordinary.
Then the general curve $C\in \calL$ is smooth of genus  $g=3$ with $C^2=3$. 

Let $\phi: \tilde X\longrightarrow X$ be the minimal desingularization of $X$ 
and let $\tilde {\calL}$ be the pull back via $\phi$ of $\calL$. 
As usual, we abuse notation and denote by $C$ 
also the general curve in $\tilde {\calL}$, 
that is a smooth curve of genus $3$ with $C^2=3$, 
and hence $K_{\tilde X}\cdot C=1$. 
Let us denote by $p_g$ and $q$ the geometric genus and the irregularity of $\tilde X$. 
Since $K_{\tilde X}\cdot C=1$, 
it is clear that $p_g\leq 1$. 
Since $\tilde X$ is covered by curves of genus $3$, one has $q\leq 3$. 

\subsection{Exclusion of  the case $p_g=1$, $q>0$} 

\begin{proposition}\label{prop:no} 
There is no triple plane $\pi: X\longrightarrow \bbP^2$ 
with branch curve of degree $10$ with $q\geq p_g=1$.
\end{proposition}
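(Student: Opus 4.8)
The plan is to pin down the birational type of $\tilde X$ and then eliminate it numerically. Since $p_g=1$ the surface is not ruled, so $\kappa(\tilde X)\ge 0$ and hence $\chi(\mathcal O_{\tilde X})=1-q+p_g\ge 0$, giving $q\le 2$. As $C^2=3>0$, the Hodge index theorem yields $(K_{\tilde X}\cdot C)^2\ge K_{\tilde X}^2\,C^2$, i.e. $K_{\tilde X}^2\le 0$. I would first exclude $\kappa=2$: writing $\rho\colon\tilde X\to X_{\min}$ for the map to the minimal model, $K_{\tilde X}\cdot C=K_{X_{\min}}\cdot\rho_*C+E\cdot C$ with $E\ge 0$ the exceptional part; if $\tilde X$ is minimal this contradicts $K_{\tilde X}^2\le 0<K_{X_{\min}}^2$, and otherwise $K_{X_{\min}}$ nef and big forces $K_{X_{\min}}\cdot\rho_*C>0$ while $E\cdot C\ge 1$ (every $(-1)$--curve meets $C$ positively), so $K_{\tilde X}\cdot C\ge 2>1$. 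The case $\kappa=0$ is handled type by type: K3 has $q=0$, Enriques and bielliptic surfaces have $p_g=0$, and the abelian case is killed by a direct computation, since $K_{\tilde X}\cdot C=1$ forces a single blow--up, whence $\bar C^2=4$ on the abelian surface and $h^0(\tilde X,C)\le h^0(\bar C)-1=1<3$. Thus $\kappa(\tilde X)=1$ and $\tilde X$ carries an elliptic fibration $g\colon\tilde X\to B$.

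Next I would run the numerics for this fibration. Let $\hat F$ be a general fibre and $n=C\cdot\hat F$; as $C$ is big it dominates $B$, and comparing the genera of $C\to B$ through Hurwitz gives $n\ge 2$ (and $n=2$ when $g(B)=2$). The irregularity constrains the base genus, and for the non--isotrivial fibrations one has $g(B)=q$. Plugging the canonical bundle formula $K_{X_{\min}}\equiv(2g(B)-2+\chi)\hat F+\sum_k(m_k-1)F_k$ into $K_{\tilde X}\cdot C=1$, the coefficient $2g(B)-2+\chi$ is strictly positive in every configuration except $g(B)=1,\ \chi=0$; otherwise $K_{\tilde X}\cdot C\ge n\ge 2$, a contradiction, and the non--minimal subcases collapse the same way (the vertical part of $K_{X_{\min}}$ is then forced to be trivial, i.e. $\kappa=0$). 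The single survivor is: $\tilde X$ minimal, $q=2$, $g(B)=1$, $\chi=0$, with $n\sum_k(1-1/m_k)=1$; since $\kappa=1$ forces $\sum_k(1-1/m_k)\ge\tfrac12$, this pins down $n=2$ and exactly one double fibre $F_1$, with $K_{\tilde X}\equiv F_1$.

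Finally I would kill this last case by pushing forward to the base. Here $C$ is a bisection of $g$, so $\mathcal V:=g_*\mathcal O_{\tilde X}(C)$ is a rank $2$ bundle on the elliptic curve $B$; since $C$ restricts to a positive degree on each fibre, $R^1g_*\mathcal O_{\tilde X}(C)=0$ and $\deg\mathcal V=\chi(\tilde X,\mathcal O(C))=\tfrac12(C^2-K_{\tilde X}\cdot C)=1$. A sub--line bundle of $\mathcal V$ of degree $e$ corresponds to a nonzero element of $|C-e\hat F|$, and intersecting with the nef class $C$ gives $(C-e\hat F)\cdot C=3-2e\ge 0$, so $e\le 1$. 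A rank--$2$, degree--$1$ bundle on an elliptic curve all of whose sub--line bundles have degree $\le 1$ satisfies $h^0\le 2$; hence $h^0(\tilde X,C)=h^0(B,\mathcal V)\le 2$, contradicting that $\mathcal L\subseteq|C|$ is a net, so that $h^0(\tilde X,C)\ge 3$. This contradiction finishes the argument.

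The delicate point, and the step I expect to be the main obstacle, is precisely this last configuration. It is \emph{not} eliminated by the canonical--bundle bookkeeping, because a single logarithmic transform of a product $\mathcal E_0\times B$ already produces a properly elliptic surface with exactly these invariants ($q=2$, $p_g=1$, one double fibre over an elliptic base); the contradiction only surfaces when one combines the positivity of $\mathcal V$ on $B$ with the irreducibility and nefness of $C$. Care is likewise needed in the base--genus and multiple--fibre bookkeeping and in treating the non--minimal reductions uniformly. Pompilj's Theorem~\ref{thm:pomp} offers an alternative entry to the elliptic case, but it only yields that $\tilde X$ is Cremona equivalent to an exceptional example, and exceptional examples can themselves have positive $p_g$; so it does not by itself dispose of the final case, and the pushforward estimate appears to be the cleanest way to close it.
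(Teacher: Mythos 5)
Your proof is correct, and in the decisive branch it takes a genuinely different --- and in fact more complete --- route than the paper. For $q=1$ the two arguments essentially coincide: the paper contracts the unique $(-1)$--curve if necessary and gets a contradiction from the fact that on a minimal properly elliptic surface the canonical class is supported on fibres, which is exactly your canonical-bundle-formula computation $K_{\tilde X}\cdot C\geq n\geq 2$; and your abelian endgame ($\bar C^2=4$, polarization of type $(1,2)$, $h^0\leq 2<3$) is precisely the paper's $q=2$ conclusion. The divergence is in how the $q=2$ case is reduced: the paper disposes of it with the single sentence ``by the classification, the only possible case is that $q=2$ and $\tilde X$ is birational to an abelian surface,'' which silently discards minimal properly elliptic surfaces with $\chi(\calO)=0$, $p_g=1$, $q=2$ --- and such surfaces do exist (for instance $(E\times C)/\langle(-1_E,\sigma)\rangle$ with $E$ elliptic and $\sigma$ a free involution on a genus-$3$ curve $C$ is minimal, properly elliptic over the genus-$2$ base $C/\sigma$, with $p_g=1$, $q=2$, $\chi=0$), so that step of the paper is at best an omitted argument. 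You instead run the canonical bundle formula over all admissible base genera, kill $g(B)=2$ at once ($K\cdot C\geq 2n\geq 4$), and isolate the one configuration the numerics do not exclude: a minimal quasi-bundle over an elliptic base with $\chi=0$, $n=2$ and a single double fibre, $K_{\tilde X}\equiv F_1$. Your elimination of this survivor by pushing forward --- $\calV=g_*\calO_{\tilde X}(C)$ of rank $2$ and degree $1$ on the elliptic base, with nefness of $C$ bounding every sub--line bundle by degree $\leq 1$, whence $h^0(\tilde X,\calO_{\tilde X}(C))=h^0(B,\calV)\leq 2<3$, contradicting the net $\calL$ --- has no counterpart in the paper and is exactly the ingredient needed to close the case the paper's appeal to classification leaves open. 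Two small points: in the abelian case the crude bound $h^0(\tilde X,\calO_{\tilde X}(C))\leq h^0(\bar C)=2$ already suffices (your ``$-1$'' presumes the blown-up point is not a base point of $|\bar C|$, which you do not need); and the vanishing $R^1g_*\calO_{\tilde X}(C)=0$ deserves its one-line justification, namely that $\chi=0$ forces every fibre to be smooth elliptic or a double of one, on each of which $C$ restricts to a line bundle of positive degree. Your closing remark that a single logarithmic transform of $\calE_0\times B$ realizes the survivor's invariants should be taken with a grain of salt --- whether $q=2$ (rather than $q=1$, $p_g=0$) survives the transform depends on the direct-image line bundle staying trivial --- but that remark is motivational and not load-bearing.
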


\begin{proof}  
Since $p_g=1$, the Kodaira dimension of $X$ is at least $0$. 
By the index theorem, since $C^2=3$ and $K_{\tilde X}\cdot C=1$, 
one has $K_{\tilde X}^2\leq 0$. 

Suppose first that $q=1$ and that $\tilde X$ is minimal. 
Hence $K_{\tilde X}^2= 0$ and, by the classification, 
$\tilde X$ is a properly elliptic surface (i.e., with Kodaira dimension equal to $1$). 
Then the unique canonical curve is a fibre of the elliptic fibration. 
But then $K_{\tilde X}\cdot C=1$ is clearly a contradiction. 

Suppose again that $q=1$ and that $\tilde X$ is not minimal, 
so there is some $(-1)$--curve on $\tilde X$. 
Since any $(-1)$--curve is a component of the unique canonical curve 
and since $|C|$ intersects positively any $(-1)$--curve, 
we conclude that there is a unique $(-1)$--curve $E$ on $\tilde X$ 
and that $C\cdot E=1$. 
Let us consider the morphism $f: \tilde X \longrightarrow X'$ 
that contracts $E$ to a point $x$. 
Let us set $C'=f_*(C)$. 
Then $(C')^2=4$, $C'\cdot K_{X'}=0$ and again by the index theorem,
one has $K_{X'}^2\leq 0$. 
But now $X'$ is minimal, 
hence $K_{X'}^2=0$ and again $X'$ is a properly elliptic surface. 
But $C'\cdot K_{X'}=0$ implies that $C'$ is composed by fibres of the elliptic fibration,
which is impossible since $(C')^2=4$. 
So the case $q=1$ is excluded. 

Assume next $2\leq q\leq 3$. 
By the classification, the only possible case is that $q=2$ 
and $\tilde X$ is birational to an abelian surface. 
Since $C\cdot K_{\tilde X}=1$, 
$\tilde X$ is not minimal. 
Since any $(-1)$--curve is a component of the unique canonical curve on $\tilde X$ 
and since $|C|$ intersects positively any $(-1)$--curve, 
we conclude that there is a unique $(-1)$--curve $E$ on $\tilde X$ 
and that $C\cdot E=1$. 
Let us consider the morphism $f: \tilde X \longrightarrow X'$ 
that contracts $E$ to a point $x$, so that $X'$ is an abelian surface. 
Let us set $C'=f_*(C)$. 
Then $(C')^2=4$ and therefore $C'$ determines a polarization of type $(1,2)$ on $X'$, 
so that $\dim (|C'|)=1$, which excludes this case.  
\end{proof}

\subsection{The case $q=0$, $p_g=1$}  
The first example that comes to mind of such a triple plane
is the projection of a quartic surface in $\bbP^3$
from a point on the surface.
In fact this is the only such example, assuming canonical singularities.

\begin{proposition}\label{prop:4ic} 
If $q=0$, $p_g=1$ then 
there is an irreducible quartic surface $S\subset \bbP^3$ with canonical singularities, 
and there is a smooth point $p\in S$ 
such that $X$ is the blow--up of $S$ at $p$ 
and the triple plane $\pi: X\longrightarrow \bbP^2$ 
corresponds to the projection of $S$ to a plane from $p$.
\end{proposition}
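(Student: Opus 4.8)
The plan is to recover $S$ as the image of $\tilde X$ under the adjoint linear system $|K_{\tilde X}+C|$. First I would compute its dimension from the restriction sequence $0\to\calO_{\tilde X}(K_{\tilde X})\to\calO_{\tilde X}(K_{\tilde X}+C)\to\calO_C(K_C)\to 0$: since $h^0(K_{\tilde X})=p_g=1$, $h^1(K_{\tilde X})=q=0$ and $h^0(K_C)=g=3$, the long exact sequence gives $h^0(K_{\tilde X}+C)=4$, so $|K_{\tilde X}+C|$ maps $\tilde X$ to $\bbP^3$. Recording $(K_{\tilde X}+C)^2=K_{\tilde X}^2+2K_{\tilde X}\cdot C+C^2=K_{\tilde X}^2+5$ and the arithmetic genus $p_a(K_{\tilde X}+C)=K_{\tilde X}^2+4$, one sees that a quartic surface (degree $4$, sectional genus $3$) appears precisely when $K_{\tilde X}^2=-1$.

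The crux is therefore to prove $K_{\tilde X}^2=-1$. Since $C$ is big and nef, the Hodge index theorem gives $K_{\tilde X}^2\le (K_{\tilde X}\cdot C)^2/C^2=1/3$, hence $K_{\tilde X}^2\le 0$; and $p_g=1$ forces $\kappa(\tilde X)\ge 0$. I would first exclude general type: on a minimal model $\sigma:\tilde X\to X_0$ the class $K_0:=K_{X_0}$ is nef with $K_0^2\ge 1$, and $C_0:=\sigma_*C$ is big, so Hodge index yields $(K_0\cdot C_0)^2\ge K_0^2C_0^2\ge 1\cdot 3$, i.e. $K_0\cdot C_0\ge 2$; but $K_{\tilde X}\cdot C=K_0\cdot C_0+(\text{effective})\cdot C\ge K_0\cdot C_0$ would give $1\ge 2$, a contradiction. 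Next, since $X$ has at worst canonical singularities, $K_{\tilde X}=\phi^*K_X$, so the unique effective canonical curve $Z\in|K_{\tilde X}|$ contains no $\phi$-exceptional component; as every irreducible curve $D$ with $C\cdot D=0$ is $\phi$-exceptional (because $C=(\pi\circ\phi)^*(\text{line})$ and $\pi$ is finite) and $C\cdot Z=K_{\tilde X}\cdot C=1$, the curve $Z$ is irreducible with $C\cdot Z=1$, whence $Z^2=K_{\tilde X}^2\in\{-1,0\}$ because $p_a(Z)=1+Z^2\ge 0$.

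The delicate point is to rule out $Z^2=0$: then $K_{\tilde X}=Z$ is nef, so $\tilde X$ is minimal and, by the classification, properly elliptic, with $Z$ a curve of arithmetic genus $1$ satisfying $C\cdot Z=1$. But $C\cdot Z=1$ forces $\pi\circ\phi$ to map $Z$ birationally onto a line, so $Z$ is rational; an irreducible rational curve of arithmetic genus $1$ and self-intersection $0$ supporting the canonical divisor would be a reduced singular fibre of the elliptic pencil, which moves in that pencil and so contradicts $h^0(K_{\tilde X})=p_g=1$. Hence $Z^2=-1$ and $Z=:E$ is a $(-1)$-curve with $C\cdot E=1$. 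This elliptic exclusion is the step I expect to be the main obstacle, since it is the one point where the purely numerical constraints do not close the argument by themselves.

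It then remains to read off the geometry. Put $H:=K_{\tilde X}+C$, so $H^2=4$ and $\dim|H|=3$. Since $H\cdot E=K_{\tilde X}\cdot E+C\cdot E=-1+1=0$ and $H\cdot\Gamma_j=C\cdot\Gamma_j=0$ for every $\phi$-exceptional $(-2)$-curve $\Gamma_j$ (these are disjoint from $E$, which lies over a smooth point of $X$), the morphism $\phi_{|H|}$ contracts $E$ and all the $\Gamma_j$. I would verify that $|H|$ is base point free and birational onto its image, so that $S:=\phi_{|H|}(\tilde X)\subset\bbP^3$ is an irreducible surface of degree $H^2=4$ with hyperplane sections of genus $3$, i.e. a quartic surface; contracting $E$ first exhibits a smooth K3 surface $\tilde S$ with $\tilde X=\mathrm{Bl}_p\tilde S$, and further contracting the $\Gamma_j$ realizes the canonical (Du Val) model of $\tilde S$ as the quartic $S$, with $p:=\phi_{|H|}(E)$ a smooth point of $S$. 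Finally $C=H-E$, so $\phi_{|C|}$ is the projection of $S\subset\bbP^3$ away from $p$; contracting the $\Gamma_j$ on $X$ identifies $X$ with $\mathrm{Bl}_pS$ and $\pi$ with the induced triple cover, as claimed.
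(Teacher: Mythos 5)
Your overall strategy (pin down the unique canonical curve $Z$, show it is a $(-1)$-curve, contract it to a K3 and recover the quartic as the image of $|K_{\tilde X}+C|$) is different from the paper's and has an attractive endgame -- once $Z=E$ is an irreducible $(-1)$-curve, contracting it kills the canonical class and gives the K3 at once. But there are two genuine gaps. The first is the irreducibility of $Z$. The hypothesis ``$X$ has at worst canonical singularities'' is not part of the proposition: only $q=0$, $p_g=1$ is assumed, and the canonical nature of the singularities of $S$ is part of the \emph{conclusion} (the paper derives it from the fact that a quartic of geometric genus $1$ can only have canonical singularities). Worse, even granting Du Val singularities and $K_{\tilde X}=\phi^*K_X$, your inference fails: if the canonical curve $Z_X\in |K_X|$ passes through a singular point, then $\phi^*Z_X$ is the strict transform \emph{plus} a nonzero effective exceptional divisor (forced by $\phi^*Z_X\cdot E_i=0$), so $Z$ may very well contain $\phi$-exceptional components. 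What the numerics give for free is only $Z=Z_0+V$ with $Z_0$ the unique component meeting $C$ (with $C\cdot Z_0=1$) and $V$ effective and $\phi$-exceptional; with $V$ possibly nonzero, your computation $p_a(Z)=1+Z^2$ and the dichotomy $Z^2\in\{-1,0\}$ no longer follow.

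The second gap is the elliptic exclusion, precisely the step you flagged as the crux. On a minimal properly elliptic surface with $p_g=1$, $q=0$ (so $\chi=2$, base $\bbP^1$), the canonical bundle formula gives $K_{\tilde X}\sim\sum_i(m_i-1)F_i$ supported on the \emph{multiple} fibres; an irreducible reduced canonical curve is then the support $F_1$ of a multiple fibre $2F_1$, i.e.\ a half-fibre, and half-fibres do \emph{not} move in the pencil. Moreover multiple fibres of type ${}_2I_1$ (nodal rational support) exist, so the rationality of $Z$ yields no contradiction -- only a cuspidal support (Kodaira type $II$) is forbidden from being multiple. The conclusion can be rescued by a different argument: $K_{\tilde X}\equiv\frac{1}{2}F$ numerically gives $C\cdot F=2(K_{\tilde X}\cdot C)=2$, so the general genus-$3$ curve $C$ would carry a degree-$2$ map to $\bbP^1$, hence be hyperelliptic, contradicting the base-point-free $g^1_3$ cut out by $\calL$ (this is the same device the paper uses elsewhere, e.g.\ in Proposition \ref{prop:plop2}). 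For comparison, the paper avoids both difficulties entirely: it never computes $K_{\tilde X}^2$ nor invokes the classification, but decomposes $|K_{\tilde X}+C|=|F|+\Phi$, shows $D=F-C$ is effective with $(F^2,D^2)$ either $(4,-1)$ or $(5,0)$, proves $|F|$ is base point free when $F^2=4$ and birational onto a quartic (the $2\colon\!1$-onto-a-quadric alternative being killed by the $3\colon\!1$ map coming from $D+|C|\subset|F|$), and rules out $F^2=5$; the properly elliptic possibility is then excluded automatically, since the quartic has geometric genus $1$, hence canonical singularities, hence a blown-up K3 as resolution. Your deferred verifications (base point freeness and birationality of $|K_{\tilde X}+C|$, and $H\cdot\Gamma_j=0$ for the $\phi$-exceptional curves, which again presupposes they are $(-2)$-curves) are comparatively minor, but as written the proof does not close.
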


\begin{proof} 
Consider the adjoint linear system $|K_{\tilde X}+C|$, 
that we can write as $|K_{\tilde X}+C|=|F|+\Phi$, 
with $\Phi$ the fixed part and $|F|$ the movable part. 
Note that $|F|$ cannot be composed with a pencil, 
otherwise the curves in $|C|$ would be hyperelliptic, 
which is not possible because they have a base point free $g^1_3$. 
So the general curve in $|F|$ is irreducible.

One has $C\cdot \Phi=0$ and $F\cdot C=4$. 
Moreover, since $h^i( \tilde X, \calO_{\tilde X}(K_{\tilde X}+C))=0$ for $1\leq i\leq 2$, 
by Riemann--Roch we have $\dim (|F|)=\dim (|K_{\tilde X}+C|)=3$. 
By looking at the exact sequence 
$$
0\longrightarrow \calO_{\tilde X}(F-C)\longrightarrow \calO_{\tilde X}(F)
\longrightarrow \omega_C\longrightarrow 0,
$$
we see that $h^0(\tilde X, \calO_{\tilde X}(F-C))>0$, hence $D:=F-C$ is effective.  
Since $C\cdot D=C\cdot F-C^2=1$, by the index theorem we deduce that $D^2\leq 0$. 
So $0\geq D^2=F^2+C^2-2F\cdot C=F^2-5$ so that $F^2\leq 5$. 
On the other hand $F^2=F\cdot C+F\cdot D\geq 4$. 
So either $F^2=4$, $D^2=-1$, $F\cdot D=0$ or $F^2=5$, $D^2=0$, $F\cdot D=1$. 
 
Note now that $|F|$  contains the subsystem of codimension $1$ given by $D+|C|$. 
Since $C$ has no base points, then the base points of $|F|$, if any, can be only on $D$. 
So if $F^2=4$ then $|F|$ has no base points, 
and if $F^2=5$ there can be at most one base point of $|F|$ on $D$. 

Let us first consider the case $F^2=4$. 
Then the morphism $\varphi_{|F|}: \tilde X\longrightarrow  S\subset \bbP^3$ 
mapping $\tilde X$ to a surface $S$, is birational onto its image, 
because otherwise, since $F^2=4$, it would determine a $2:1$ map onto a quadric, 
which is not the case because $|C|$, that is contained in $|F|$, 
determines a $3:1$ map to the plane. 

So $S$ is a quartic surface in $\bbP^3$. 
Since $S$ has geometric genus $1$, it has only canonical singularities. 
The map $\varphi_{|F|}$ contracts $D$ to a point $p\in S$ 
and the projection of $S$ from $p$ to a plane determines exactly the original triple plane. Hence $p$ is a simple point of $S$ and the assertion follows. 

Assume next that $F^2=5$ and that $|F|$ has a base point $p$ on $D$.  
For the same reasons as above, 
the map $\varphi_{|F|}: \tilde X\dasharrow  S\subset \bbP^3$ 
mapping $\tilde X$ to a surface $S$, is birational onto its image. 
Then $S$ is a quartic surface in $\bbP^3$, that, as above, has only canonical singularities. 
The map $\varphi_{|F|}$ contracts $D$ to a point $p\in S$ 
and the projection of $S$ from $p$ to a plane determines exactly the original triple plane. But then we are in the situation described before and we are not in the case $F^2=5$.

Similarly, if $F^2=5$ and  $|F|$ has no base points, 
then the morphism $\varphi_{|F|}: \tilde X\longrightarrow  S\subset \bbP^3$ 
mapping $\tilde X$ to a surface $S$, is birational onto its image 
and $S$ is a quintic surface in $\bbP^3$. 
The morphism $\varphi_{|F|}$ maps $D$ to a line $r$ 
and $|C|$ would be the pull--back on $\tilde X$ 
of the linear system of plane sections of $S$ through the line $r$. 
But then $|C|$ would have dimension $1$ and not $2$, which is a contradiction. 

So the case $F^2=5$ cannot occur, and this finishes the proof of the assertion. 
\end{proof}

We notice that if, in the above setup, $S$ is smooth and $p\in S$ is a general point, 
then the triple plane obtained by projecting $S$ from $p$ 
has Tschirnhausen bundle isomorphic to 
$\calO_{\bbP^2}(-2)\otimes \calO_{\bbP^2}(-3)$ 
and the branch curve has degree $10$ with, in general, exactly $18$ cusps 
(see again \cite [Lemma 10.1 and Table 10.5]{M}). 
If the surface $S$ is singular or $p$ is a special (simple) point, 
the branch curve may acquire more singular points. 
We will not dwell on the description of all special cases here. 

\begin{remark} \label{rem:mod} 
We  remark that the four structure constants $a,b,c,d$ as in \eqref{triplecoverstructureconstants} below 
have degrees $2,1,4,3$ respectively.
These together have $6,3,15,10$ coefficients respectively
(as forms in the homogeneous coordinates of the plane)
and therefore give $6+3+15+10-1 = 33$ dimensions of (projective) parameters
for the section of the relevant bundle for the triple cover construction as in \cite{M}.
Subtracting the $8$ dimensions for the automorphisms of the plane,
and the $4$ dimensions for the automorphisms of the Tschirnhausen bundle,
we obtain $21$ dimensions for this family of triple covers.
This is consonant with the $19$ dimensions for the parameters of the $K3$ quartic surface,
and two additional parameters for the point on the surface that is the point of projection.
\end{remark}

\subsection{The case $q>p_g=0$}\label{ssec:opla}

\begin{proposition}\label{prop:plop} 
If $\pi: X\longrightarrow \bbP^2$ is a triple plane with branch curve $B$ of degree $10$ 
with $q> p_g=0$, then the only possibilities are either $q=1$ or  $q=3$.
 
In the case $q=1$ then either $X$ is properly elliptic and its minimal desingularization is minimal or $X$ has Kodaira dimension $-\infty$. 

If $q=3$ then  the Kodaira dimension of $X$ is $-\infty$ and $B$ consists of $10$ lines in a pencil.
\end{proposition}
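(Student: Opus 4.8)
The plan is to read off the Kodaira dimension and irregularity from the holomorphic Euler characteristic, and then combine the Enriques--Kodaira classification with Pompilj's Theorem \ref{thm:pomp} (packaged as Corollary \ref{cor:pomp}). Throughout I use that on $\tilde X$ the general $C\in\tilde{\calL}$ is smooth of genus $3$ with $C^2=3$ and $K_{\tilde X}\cdot C=1$, that $C$ is big and nef, and that $\tilde{\calL}\subseteq|C|$ has dimension $2$ (so $\dim|C|\ge 2$), this being what produces the degree-$3$ map to $\bbP^2$. Since $p_g=0$, Riemann--Roch gives $\chi(\calO_{\tilde X})=1-q$.

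First I dispose of $q\ge 2$. Here $\chi(\calO_{\tilde X})=1-q<0$, so $\tilde X$ has Kodaira dimension $-\infty$ and is ruled over a smooth curve $G$ of genus $q$; the ruling is an irrational pencil $f\colon \tilde X\dashrightarrow G$ of genus $\gamma=q$ with rational general fibre $F$. Applying Pompilj's theorem to this pencil, the exceptional alternative cannot occur, since in the proof of Theorem \ref{thm:pomp} the exceptional examples arise only in case (a1), which by Claim \ref{cl:three} forces $\gamma=1$; as $\gamma=q\ge 2$ we are in case (a3), so the pencil is composed with the triple plane map. Thus I am in the situation of Section \ref{ssec:comp}, with a degree-$3$ map $\varphi\colon G\to\bbP^1$ and a fixed-component-free pencil $\calP$ of rational curves of degree $d$, where $d$ is the common degree of the plane images of the $F$. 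I compute $d=C\cdot F$ by Hurwitz applied to $f|_C\colon C\to G$: from $2g(C)-2=4=(C\cdot F)(2q-2)+\deg(\mathrm{ram})$ I get $C\cdot F=1$ when $q=3$ (so $\calP$ is a pencil of lines) and $C\cdot F=2$ when $q=2$ (so $\calP$ is a pencil of conics, irreducible since the $F$ are). Since over each point of the discriminant of $\varphi$ the triple cover is branched along the \emph{entire} fibre of $h$, taking a general line $\ell\subset\bbP^2$ (off the base points of $\calP$) the map $h$ restricts to a degree-$d$ morphism $\ell\to\bbP^1$ and $B\cap\ell$ is the preimage of the degree-$(2\gamma+4)$ discriminant divisor; hence $\deg B=\ell\cdot B=d(2\gamma+4)$. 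For $q=3$ this is $10$, and $B$ is the family of the $2\gamma+4=10$ lines of $\calP$ over the branch points of $\varphi$, i.e. $10$ lines in a pencil, with $\kappa(X)=-\infty$ (cf. the equality clause of Corollary \ref{cor:pomp}). For $q=2$ it is $16\neq 10$, a contradiction, so $q=2$ does not occur.

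It remains to treat $q=1$, where $\chi(\calO_{\tilde X})=0$; this excludes the general-type case ($\chi\ge 1$), leaving $\kappa(X)\in\{-\infty,0,1\}$. I first rule out $\kappa=0$. If $\tilde X$ is minimal, then $K_{\tilde X}$ is numerically trivial, contradicting $K_{\tilde X}\cdot C=1$. If $\tilde X$ is not minimal, then since on a surface of non-negative Kodaira dimension $K_{\tilde X}\cdot C$ is at least the number of blow-ups (each exceptional curve meets $C$ positively and $K$ is nef on the minimal model), the relation $K_{\tilde X}\cdot C=1$ forces a single blow-up $f\colon\tilde X\to X_0$ at a simple point of $C$, with $K_{X_0}\cdot C_0=0$ and $C_0^2=4$, where $C_0=f_*C$. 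Then $C_0-K_{X_0}\equiv C_0$ is nef and big, so Kawamata--Viehweg gives $h^1(X_0,\calO_{X_0}(C_0))=0$; with $h^2=0$ and $\chi(\calO_{X_0}(C_0))=2$ this yields $\dim|C_0|=1$, whence $\dim|C|\le 1$, contradicting $\dim|C|\ge 2$. Finally, if $\kappa=1$ then $X$ is properly elliptic and I claim $\tilde X$ is already minimal: otherwise the same argument produces a single blow-up $\tilde X\to X_0$ onto the minimal model with $K_{X_0}\cdot C_0=0$ and $C_0^2=4$; but for a minimal properly elliptic surface $K_{X_0}\equiv\beta F$ with $\beta>0$ and $F$ a fibre, so $K_{X_0}\cdot C_0=0$ forces $C_0\cdot F=0$, impossible for the big and nef class $C_0$ by the Hodge index theorem (as $F^2=0$, $F\not\equiv 0$). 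This leaves exactly the two stated possibilities for $q=1$.

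The step I expect to be delicate is the degree bookkeeping in the $q=2$ exclusion: one must verify that over each branch point of $\varphi$ the \emph{entire} conic fibre of $h$—rather than a proper subcurve—enters $B$, so that $\deg B=d(2\gamma+4)=16$ genuinely exceeds $10$ (in particular that reducible members of $\calP$ do not shed a branched component). The other steps are routine applications of Riemann--Roch, Kawamata--Viehweg and the surface classification.
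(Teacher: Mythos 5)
Your treatment of $q=1$ and of $q=3$ is correct and in substance the paper's own argument: Hurwitz applied to $f|_C$ gives $q\le 3$ together with $C\cdot F=1$ for $q=3$ and $C\cdot F=2$ for $q=2$ (the paper phrases this as $g\colon C\to\Gamma$ being an isomorphism, resp.\ an \'etale double cover), and your $q=1$ analysis --- at most one blow--up since $C$ meets every $(-1)$--curve positively, Kawamata--Viehweg plus Riemann--Roch giving $\dim|C_0|=1$ on a minimal bielliptic model, and Hodge index forcing minimality in the properly elliptic case --- reorganizes but reproduces the paper's computation ($1=C\cdot K_{\tilde X}=C\cdot E+tm\ge s+2t$, etc.). Your shortcut for discarding the exceptional alternative when $\gamma=q\ge 2$, namely that case (a1) is impossible by Claim \ref{cl:three} so one is automatically in the composed case (a3), is a legitimate reading of the proof of Theorem \ref{thm:pomp} and is arguably cleaner than the paper's route, which for $q=2$ invokes Corollary \ref{cor:pomp} and then kills the exceptional case by showing the Cremona map $\phi$ would be a quadratic transformation, forcing branch degree $12\neq 10$.

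The genuine gap is exactly the step you flagged: the asserted equality $\deg B=d(2\gamma+4)=16$ in the $q=2$ exclusion. The paper's Section \ref{ssec:comp} only yields $2\gamma+4\le\deg B\le(2\gamma+4)d$, i.e.\ $8\le\deg B\le 16$, and $10$ lies in this range, so without a further argument this bookkeeping excludes nothing. Moreover your supporting claim that the triple cover is branched along the \emph{entire} fibre of $h$ over each branch point of $\varphi$ is false in general, and it fails at exactly the members you did not worry about: not the reduced reducible members (for a line pair $\ell_1+\ell_2$ both lines carry the same ramification indices as $\varphi$, so nothing is shed there), but the \emph{non--reduced} members. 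A fixed--component--free pencil of conics with irreducible general member can contain a double line, e.g.\ $\lambda x^2+\mu yz$ --- the paper's remark that a pencil of rational curves has no multiple members concerns the fibres of the resolved fibration, not the plane members --- and over such a member $2\ell$ a simple branch point $t_0$ of $\varphi$ contributes nothing to $B$: the ramification index $2$ of $\varphi$ is absorbed by the multiplicity $2$ of $h^*(t_0)$ along $\ell$, since by Abhyankar's lemma the indices over $\ell$ are $\mathrm{lcm}(e_j,2)/2=1$. Your conclusion can nevertheless be rescued: double--line members correspond to points of the Veronese surface of rank--one conics in the $\bbP^5$ of conics, which contains no lines and is cut out by quadrics, so the pencil contains at most two of them; each sheds at most $2$ from the degree, whence $\deg B\ge 16-4=12>10$ in the composed case. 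But this (or some substitute, such as the paper's argument via Corollary \ref{cor:pomp}) is missing from your proof, so as written the $q=2$ exclusion is incomplete.
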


\begin{proof} 
Suppose first $q\geq 2$. 
Then $\chi(\calO_X)\leq -1$ and therefore the Kodaira dimension of $X$ is $-\infty$. 
Let, as usual $\phi: \tilde X \longrightarrow X$ be the minimal desingularization of $X$. 
Then $\tilde X$ is swept out by curves of genus $3$, hence $q\leq 3$. 
Let $f: \tilde X\longrightarrow \Gamma$ be the irrational pencil of curves $F$ of genus $0$ on $\tilde X$. 

If $q=3$, then the curves $C$ are unisecant to the curves of the pencil, 
i.e., $C\cdot F=1$. 
Since $C$ intersects any $(-1)$--curves on $\tilde X$, 
then $\tilde X$ must be minimal, i.e., all fibres of the irrational pencil 
$f: \tilde X\longrightarrow \Gamma$ are smooth and rational. 
Now, by arguing exactly as in the proof of Proposition \ref {prop:4}, 
we see that the irrational pencil $f: \tilde X\longrightarrow \Gamma$ 
is composed with the triple plane map, 
and the branch curve is composed of $10$ lines of a pencil. 

If $q=2$, then $f: \tilde X\longrightarrow \Gamma$ 
induces a morphism $g: C\longrightarrow \Gamma$, 
which is necessarily an \'etale double cover. 
Then the curves $C$ are bisecant 
to the curves of the irrational pencil $f: \tilde X\longrightarrow \Gamma$, 
i.e., $C\cdot F=2$ 
and the fibres of $f$ are mapped to the conics of a family $\calF$.

By Corollary \ref {cor:pomp}, either $\pi: X\longrightarrow \bbP^2$ 
is Cremona equivalent to an exceptional example, 
or the branch curve of $\pi: X\longrightarrow \bbP^2$ consists of $10$ lines in a pencil. 
So, to finish the proof, we have to exclude the former case. 
In that case in fact we have a diagram like  \eqref {eq:ex} 
(where $\Gamma$ is a smooth elliptic curve 
and $\Gamma[2]\longrightarrow \bbP^2$ is the basic exceptional example) 
and the rational map $\phi: \bbP^2 \dasharrow \bbP^2$ 
maps the conics of the family $\calF$ 
to the tangent lines to the branch curve of the basic exceptional example. 
This implies that $\phi$ is a quadratic transformation 
and therefore the branch curve of $\pi: X\longrightarrow \bbP^2$ 
would be easily seen to have degree $12$, a contradiction. 

Suppose finally that  $q=1$.  
Assume  that the Kodaira dimension of $X$ is not $-\infty$.  
By the index theorem, we have $K_{\tilde X}^2\leq 0$. 
Since $\chi(\calO_X)=0$, then $\tilde X$ is either bielliptic 
(so with Kodaira dimension $0$) or properly elliptic (with Kodaira dimension $1$).  
In any case there is an elliptic fibration $f: \tilde X \longrightarrow G$ 
(and $G$ is a smooth irreducible curve of genus $\gamma\leq 1$). 
Let $F$ be a general fibre of the elliptic fibration. 
Let us set $m=C\cdot F$ and note that $m\geq 2$.

Suppose that $\tilde X$ is not minimal. 
Then there is a non--negative rational number $t$ such that  $K_{\tilde X}\equiv tF+E$, 
where $E$ is the sum of a number $s$ of $(-1)$--divisors. 
Since $C$ intersects positively all $(-1)$--divisors, 
we have $1=C\cdot K_{\tilde X}=C\cdot E+tm\geq s+2t\geq s$. 
From this we deduce that $t=0$ and $s=1$. 
This implies that $E$ is a unique $(-1)$--curve and that $C\cdot E=1$. 
Let $f: \tilde X\longrightarrow X'$ be the blow--up of a point $p\in X'$, 
contraction of the $(-1)$--curve $E$, 
so that $X'$ is minimal and, by the classification  of surfaces, is bielliptic. 
Let $C'=f_*(C)$. Then $(C')^2=4$, $C'$ is big and nef and $h^i(X', \calO_{X'}(C'))=0$ 
by the Kawamata--Vieweg theorem,  
because $C'-K_{X'}$ is big and nef. 
So, by Riemann--Roch we have $h^0(X', \calO_{X'}(C'))=\chi(\calO_{X'}(C'))=2$, 
which gives a contradiction to $\dim (|C|)=2$. 
So in conclusion $t>0$, and $\tilde X$ is properly elliptic and minimal. 
\end{proof}

\begin{example}\label{ex:dol} An easy example of the $q=3$ case is given by the projection of a cone $S$ over a smooth plane quartic from a point on it.\end{example}

\begin{remark}\label{rem:kl} 
It would be nice to understand whether, in Proposition \ref {prop:plop}, 
the case $q=1$ can really occur, and to classify those cases. 

In any event, we can say a bit more about the case 
$q=1$ and $\tilde X$ properly elliptic and minimal. 
First of all from $C\cdot K_{\tilde X}=1$,  $K_{\tilde X}\equiv tF$, and $C\cdot F=m$, 
we deduce that $t=1/m$ hence $mK_{\tilde X}\equiv F$. 

Recall now the canonical bundle formula for $f: \tilde X \longrightarrow G$, 
which implies that
$$
K_{\tilde X}\equiv f^*(K_G)+\sum_{i=1}^h(m_i-1)F_i,
$$
where $m_iF_i$, for $1\leq i\leq h$, are the multiple fibres of $f$ 
(see \cite [Thm. 9.18]{C}). 
From this we deduce that
$$
\Big ( m\big (\sum_{i=1}^h \frac  {m_i-1}{m_i}\big)-1\Big )K_{\tilde X}
\equiv f^*(-K_{G}).
$$
Using this we see that if $G$ is elliptic, 
then we must have 
$$
m\big (\sum_{i=1}^h \frac  {m_i-1}{m_i}\big) =1;
$$
if not, then $G$ is rational, and we have
$$
\Big ( m\big (\sum_{i=1}^h \frac  {m_i-1}{m_i}\big)-1\Big )K_{\tilde X}\equiv 2F
$$
and therefore if we set $x = \sum_{i=1}^h (m_i-1)/m_i$, we have $m(x-2)=1$.
In both cases we obtain some numerical constraints on the parameters,
which we will spare ourselves and the reader to analyze further.
\end{remark}

\subsection{The case $q=p_g=0$}

\begin{proposition}\label{prop:plop2} 
If $\pi: X\longrightarrow \bbP^2$ is a triple plane with branch curve $B$ of degree $10$ 
with $q= p_g=0$, then $X$ is rational and only the following cases are possible:\\
\begin{inparaenum}[(i)]
\item $\tilde X$  is the blow up of $\bbP^2$ at $11$ points and, up to Cremona maps,  
$\tilde {\calL}$ is the linear system $|9;3^8,2, 1^2|$;\\
\item  $\tilde X$ is the blow up of $\bbP^2$ at $11$ points and, up to Cremona maps,  
$\tilde {\calL}$ is the linear system $|7;3,2^9, 1|$;\\
\item $\tilde X$ is the blow up of $\bbP^2$ at $12$ points and, up to Cremona maps,  
$\tilde {\calL}$ is the linear system $|6;2^7, 1^5|$;\\
\item $\tilde X$ is the blow up of $\bbP^2$ at $13$ points and, up to Cremona maps,  
$\tilde {\calL}$ is the linear system $|4;1^{13}|$.
\end{inparaenum}
\end{proposition}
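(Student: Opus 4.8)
The plan is to derive rationality directly from the first adjoint system, and only afterwards to run a Cremona classification of the resulting plane linear system.

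\textbf{Rationality.} First I would record the numerical data on the minimal desingularization $\tilde X$: since $q=p_g=0$ we have $\chi(\calO_{\tilde X})=1$, and since $C$ is smooth of genus $3$ with $C^2=3$, adjunction gives $K_{\tilde X}\cdot C=1$. Because $(K_{\tilde X}+C)-K_{\tilde X}=C$ is big and nef, Kawamata--Viehweg gives $h^i(\tilde X,\calO_{\tilde X}(K_{\tilde X}+C))=0$ for $i>0$, and Riemann--Roch then yields $h^0(K_{\tilde X}+C)=\chi(\calO_{\tilde X})+\tfrac12(K_{\tilde X}+C)\cdot C=1+2=3$, so $|K_{\tilde X}+C|$ is a net. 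Restricting along $0\to\calO_{\tilde X}(K_{\tilde X})\to\calO_{\tilde X}(K_{\tilde X}+C)\to\omega_C\to 0$ and using $h^0(K_{\tilde X})=p_g=0$, $h^1(K_{\tilde X})=q=0$, I get $H^0(K_{\tilde X}+C)\cong H^0(\omega_C)$; thus the adjoint net cuts the complete canonical $g^2_4$ on $C$. Writing $|K_{\tilde X}+C|=|M|+\Phi$ with $\Phi$ the fixed part, the base point freeness of the canonical series forces $\Phi\cdot C=0$, so $M\cdot C=4$ and $|M|$ still cuts the complete $|\omega_C|$ on a general $C$. Since $C$ carries a base point free $g^1_3$ it is non--hyperelliptic, hence its canonical map is birational onto a plane quartic; as the curves $C$ sweep out $\tilde X$, the adjoint map $\varphi_{|M|}\colon\tilde X\dashrightarrow\bbP^2$ is birational, and $\tilde X$ is rational.

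\textbf{Reduction to a plane system.} Once $\tilde X$ is known to be rational I would realise it as $\bbP^2$ blown up at $n$ (possibly infinitely near) points and write $\tilde{\calL}=|d;m_1,\dots,m_n|$ with $m_1\ge\cdots\ge m_n\ge1$. The three identities $C^2=3$, $K_{\tilde X}\cdot C=1$, $p_a(C)=3$ translate into $\sum_i m_i^2=d^2-3$, $\sum_i m_i=3d+1$, and a genus relation that turns out to be automatically satisfied; likewise the expected dimension of the system is always $2$, so each admissible form is forced to be superabundant with $h^1(\tilde X,\calO(C))=1$. The low multiplicities should be read as recording the $(-2)$--configurations lying over the singularities of $X$.

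\textbf{Cremona enumeration.} I would then put $\tilde{\calL}$ into standard form by repeatedly applying the quadratic transformation centred at the three points of largest multiplicity, which sends $d\mapsto 2d-m_i-m_j-m_k$ and strictly lowers $d$ precisely when $m_i+m_j+m_k>d$; a system is terminal when $m_1+m_2+m_3\le d$. Feeding the constraints $\sum m_i=3d+1$, $\sum m_i^2=d^2-3$ into this inequality and running over $d$, one finds $d\ge4$ with: $d=4$ forcing $1^{13}$ (case (iv), $K_{\tilde X}^2=-4$); $d=6$ forcing $2^7,1^5$ (case (iii), $K_{\tilde X}^2=-3$); $d=7$ forcing $3,2^9,1$ (case (ii)); and $d=9$ forcing $3^8,2,1^2$ (case (i)), the latter two both with $K_{\tilde X}^2=-2$; while $d=5$ and $d=8$ admit no terminal solution. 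This reproduces exactly the four listed systems.

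\textbf{Main obstacle.} The delicate point is the finiteness of the enumeration: I must rule out terminal forms of degree $d\ge10$, i.e.\ bound the number of blown--up points (equivalently establish $K_{\tilde X}^2\ge-4$). Here the Noether and Euler identities give no leverage (with $\chi=1$ they are tautological), so the bound has to be extracted purely from the Diophantine pair together with the standard--form inequalities applied to \emph{all} triples of multiplicities, which is where the bookkeeping is heaviest; the appearance of an isolated multiplicity $m_i\ge4$ (first permitted when $d\ge10$) is the subcase requiring the most care. The remaining task is to confirm that each of the four normal forms is genuinely attained by a base point free net defining an honest degree $3$ morphism, that is, that the required superabundant configuration of $n$ points exists and carries no unexpected fixed component.
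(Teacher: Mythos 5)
Your rationality step contains a genuine error. From the (correct) facts that $|K_{\tilde X}+C|$ is a net cutting out the complete canonical $g^2_4$ on a general $C$, and that $C$ is non--hyperelliptic, you infer that the adjoint map $\varphi_{|K_{\tilde X}+C|}\colon \tilde X\dasharrow \bbP^2$ is birational ``as the curves $C$ sweep out $\tilde X$''. This inference is false: injectivity of the map on every member of a sweeping net does not give injectivity of the surface map, because a fibre of the adjoint map need not lie on any single curve $C$. Indeed, in the cases (i)--(iii) of the proposition the movable part $|F|$ of the adjoint system has $F^2=3$ with general member of genus $2$ (cases (i), (ii)) or $F^2=2$ with general member of genus $1$ (case (iii)); since a genus $2$ curve carries no $g^2_3$ and an elliptic curve no $g^2_2$, the restriction of $|F|$ to its own general member is a pencil, so the adjoint map is a degree $3$ (resp.\ degree $2$) cover of $\bbP^2$ whose fibres lie on the curves $F$ --- it is birational only in case (iv). The paper gets rationality by a different mechanism: it first shows the general $F$ has genus $p\leq 2$ (via the base point free $g^2_4$ cut by $|C|$ on $F$, plus an involution/quotient argument excluding hyperelliptic $F$ of genus $\geq 3$), and then uses $K_{\tilde X}\cdot F<0$ for a moving $F$ to conclude Kodaira dimension $-\infty$, hence rationality since $q=0$.

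Your Cremona enumeration is also not just ``heavy bookkeeping'': as a purely Diophantine sweep it is incorrect. Contrary to your claim that $d=5$ admits no terminal solution, the system $|5;3,1^{13}|$ satisfies $\sum m_i=16=3d+1$, $\sum m_i^2=22=d^2-3$ and $m_1+m_2+m_3=5\leq d$; and at $d=12$ the system $|12;4^8,3,2|$ is terminal with $\sum m_i=37=3d+1$ and $\sum m_i^2=141=d^2-3$ (both have $p_a=3$, as adjunction forces). So the numerical constraints, even in standard form, admit solutions beyond the four listed, and since (as you concede) the Noether--Euler identities are tautological here, no arithmetic argument will eliminate them: they must be excluded using the geometry of the triple cover ($\dim|C|=2$, the structure of the adjoint system). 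This is exactly what the paper's route accomplishes while bypassing the enumeration entirely: it proves the fixed part $\Phi$ vanishes via $\chi(\calO_{\tilde X}(\Phi-K_{\tilde X}))$ computations (which simultaneously pin down $K_{\tilde X}^2$, i.e.\ the number of blown--up points), classifies $|F|$ by the birational classification of linear systems of curves of genus $\leq 2$ on rational surfaces \cite{CaCi} --- which also handles infinitely near base points, invisible to your ``three largest multiplicities'' algorithm --- and recovers $\tilde{\calL}=|F-K_{\tilde X}|$, giving precisely the four normal forms. (Your final worry about realizability is not part of this proposition; the paper addresses it separately in Remark~\ref{rem:skip}.)
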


\begin{proof} 
First of all we claim that $h^0(\tilde X, \calO_{\tilde X}(C))=3$ and 
$h^1(\tilde X, \calO_{\tilde X}(C))=1$.

In fact, consider the exact sequence
$$
0\longrightarrow \calO_{\tilde X}\longrightarrow \calO_{\tilde X}(C)
\longrightarrow \calO_{C}(C)\longrightarrow 0.
$$ 
We have $h^0(\tilde X, \calO_{\tilde X})=1$ and $h^i(\tilde X, \calO_{\tilde X})=0$ 
for $1\leq i\leq 2$. Hence 
$$
2\leq h^0(\tilde X, \calO_{\tilde X}(C))-1=h^0(C, \calO_{C}(C))\leq 2
$$
and so $h^0(\tilde X, \calO_{\tilde X}(C))=3$ and $h^0(C, \calO_{C}(C))=2$. 
Therefore $h^1(C, \calO_{C}(C))=1$ hence $h^1(\tilde X, \calO_{\tilde X}(C))=1$. 

Consider now the adjoint system $|K_{\tilde X}+C|=|F|+\Phi$, 
where $\Phi$ is the fixed part and $|F|$ the movable part. 
The general curve in $|C|$ is not hyperelliptic, 
hence $|F|$ is not composed with a pencil, 
so its general curve is irreducible. 
Moreover $\dim (|F|)=2$, $C\cdot F=4$, $C\cdot \Phi=0$ and $C\cdot K_{\tilde X}=1$.  

Consider  the exact sequence 
\begin{equation}\label{eq:sec}
0\longrightarrow \calO_{\tilde X}(\Phi-K_{\tilde X})\longrightarrow \calO_{\tilde X}(C)\longrightarrow \calO_{F}(C)\longrightarrow 0.
\end{equation}
We notice that the restriction map 
\begin{equation}\label{eq:seca}
r: H^0(\tilde X, \calO_{\tilde X}(C))\longrightarrow H^0( F, \calO_{F}(C))
\end{equation}
is injective. Indeed $h^0(\tilde X, \calO_{\tilde X}(\Phi-K_{\tilde X}))=0$ 
because $C\cdot (\Phi-K_{\tilde X})=-1$, and $C$ is nef. 

So $|C|$ cuts out on the general curve $F\in |F|$ a base point free $g^2_4$. 
This implies that either the general curve $F\in |F|$ has genus $p\leq 2$ 
or that it is hyperelliptic of genus $p\geq 3$. 

We claim that  the general curve $F\in |F|$ cannot be hyperelliptic of genus $p\geq 3$. Indeed, if this is the case, the $g^2_4$ cut out by $|C|$ on $F$ 
is composed with the $g^1_2$. 
Let $x$ be a general point of $\tilde X$. 
Consider the pencil $\calF_x$ of curves in $|F|$ containing $x$. 
If $F\in \calF_x$ is a general curve,  
let $x'$ be the conjugate of $x$ in the $g^1_2$ on $F$. 
Then $x'$ does not depend on $F$, 
otherwise it would describe a curve $\mathfrak F_x$ 
contained in all curves of $|C|$ containing $x$, 
which is not possible. 
So $x'$ is, as is $x$, a base point of $\calF_x$, 
and all curves in $|C|$ containing $x$ also contain $x'$. 
This implies that there is an involution $\iota: \tilde X \dasharrow \tilde X$ 
that maps $x$ to $x'$. 
Let $g: \tilde X\dasharrow Y$ be the quotient by this involution. 
By the above argument we see that the degree $3$ map $\tilde X\longrightarrow \bbP^2$, should factor through $g$, 
which is a contradiction. 
This proves our claim,  and we must have that 
the general curve $F\in |F|$ has genus $p\leq 2$.

The classification depends on the various values that $p$ can have.

Suppose first that  the general curve $F\in |F|$ has genus 2.  
We remark that the map $r$ is an isomorphism 
and $h^1( F, \calO_{F}(C))=0$. 
This implies that $h^1(\tilde X, \calO_{\tilde X}(\Phi-K_{\tilde X}))=1$ 
and $h^2(\tilde X, \calO_{\tilde X}(\Phi-K_{\tilde X}))=0$ 
(because $H^2(\tilde X, \calO_{\tilde X}(C))=0$);
hence $2K_{\tilde X}-\Phi$ is not effective 
and $\chi(\calO_{\tilde X}(\Phi-K_{\tilde X}))=-1$. 

We have $(K_{\tilde X}+F)\cdot F=2$, hence $K_{\tilde X}\cdot F=2-F^2$. 
Moreover
$$
1+K_{\tilde X}^2=K_{\tilde X}\cdot (C+K_{\tilde X})=K_{\tilde X}\cdot F+K_{\tilde X}\cdot \Phi=2-F^2+K_{\tilde X}\cdot \Phi,
$$
so that 
$$K_{\tilde X}^2=1-F^2+K_{\tilde X}\cdot \Phi.$$
In addition 
$$
4+F\cdot K_{\tilde X}=F\cdot (K_{\tilde X}+C)=F^2+F\cdot \Phi\geq F^2,
$$
hence
$$
6=4+F\cdot K_{\tilde X}+F^2\geq 2F^2
$$
so that $F^2\leq 3$, and if the equality holds one has $F\cdot \Phi=0$.

By the exact sequence 
$$
0\longrightarrow \calO_{\tilde X}\longrightarrow \calO_{\tilde X}(F)
\longrightarrow \calO_{F}(F)\longrightarrow 0
$$
since $h^1( \tilde X, \calO_{\tilde X})=0$ and $h^0( \tilde X, \calO_{\tilde X}(F))=3$, 
we have $h^0(F, \calO_{F}(F))=2$. 
This implies that $F^2\geq 2$ because 
if $\calF$ is a line bundle of degree at most $1$ on $F$, 
one has $h^0( F, \calF)\leq 1$. 
 
Also
$$
6=F\cdot C+F\cdot (F+K_{\tilde X})=F^2+F\cdot (K_{\tilde X}+C)=2F^2+F\cdot \Phi
$$ 
so that $F\cdot \Phi\in \{0, 2\}$. 

Finally, we have
$$
5+K^2_{\tilde X}=(C+K_{\tilde X})^2=F^2+\Phi^2+2F\cdot \Phi.
$$

Now we claim that $\Phi=0$. 
First we notice that if there is an irreducible curve $A$ contained in $\Phi$ 
such that $K_{\tilde X}\cdot A<0$, 
then, since $A^2<0$, the curve $A$ is a $(-1)$--curve such that $C\cdot A=0$, 
which is impossible. 
So, if $\Phi$ is non--zero, 
then for any curve $A$ in $\Phi$ 
one has $K_{\tilde X}\cdot A\geq 0$ and therefore $K_{\tilde X}\cdot \Phi\geq 0$. 

Now, suppose that $F\cdot \Phi=0$ which then gives $F^2=3$. Then
$$
5+K^2_{\tilde X}=3+\Phi^2\leq 2, \quad {\rm i.e.,}\quad K^2_{\tilde X}\leq -3.
$$

Since $\chi(\calO_{\tilde X}(\Phi-K_{\tilde X}))=-1$, we have
$$
-4=\Phi^2-3K_{\tilde X}\cdot \Phi+2K_{\tilde X}^2\leq \Phi^2-3K_{\tilde X}\cdot \Phi-6,
$$
hence
$$
2+3K_{\tilde X}\cdot \Phi \leq \Phi^2\leq -1
$$
thus $K_{\tilde X}\cdot \Phi < 0$, which is a contradiction.

Assume next that $F\cdot \Phi=2$, so that $F^2=2$. 
Then $|F|$ is base point free 
and $|F|$ cuts out on the general curve $F\in |F|$ the canonical $g^1_2$. 
Then $\varphi_{|F|}: \tilde X\longrightarrow \bbP^2$ 
is a double cover with a branch curve $D$ of degree $6$. 
Since $p_g=0$, then $D$ cannot have only irrelevant singularities, 
hence $D$ must have either a point of multiplicity (at least) $4$ 
or a triple point with an infinitely near triple point 
(a so called $[3,3]$--point). 
Hence $\tilde X$ is rational 
(see \cite [\S 8.4]{C}, and  $-K_{\tilde X}$ is effective. 
But this gives a contradiction, since $C\cdot K_{\tilde X}=1$ and $C$ is nef. 

So we have proved that, 
if the general curve $F\in |F|$ has genus $2$, $\Phi=0$ and therefore $F^2=3$. 
The system $|F|$ can have at most one simple base point 
and, by Bertini's theorem, the general curve in $|F|$ is smooth 
and, as we know, irreducible. 
Moreover $h^1(\tilde X, \calO_{\tilde X}(F))=0$ 
and since $K _{\tilde X}\cdot F=-1$, 
the Kodaira dimension of $\tilde X$ is $-\infty$ and therefore $\tilde X$ is rational.
Then, by \cite [Prop. 10.10, Rem. 10.11]{CaCi}, 
up to a Cremona transformation $|F|$ can be identified with one of these systems:
$|6;2^8,1|$ and $|4;2, 1^9|$. 
Since $F\sim K_{\tilde X}+C$, 
these two cases give rise to cases (i) and (ii) of the statement of the theorem. 

To finish we need to analyze the two cases 
in which the curves of $|F|$ have  genus one and genus zero.

First we assume that the general curve $F\in |F|$ has genus 1. 
Then  $F\cdot K_{\tilde X}=-F^2<0$, 
which implies that $\tilde X$ is rational. 
Then, up to a Cremona transformation, 
we have that $|F|$ can be identified with the linear system $|3;1^7|$; 
hence $F^2=2$, so that $F\cdot K_{\tilde X}=-2$. Thus
$$
1+K_{\tilde X}^2=K_{\tilde X}\cdot (C+K_{\tilde X})
=K_{\tilde X}\cdot F+K_{\tilde X}\cdot \Phi=K_{\tilde X}\cdot \Phi-2, 
\quad {\rm i.e.,}\quad K_{\tilde X}\cdot \Phi=K_{\tilde X}^2+3.
$$
Going back to the sequence \eqref{eq:sec}, 
recall that the map $r$ in \eqref {eq:seca} is injective 
because $\Phi-K_{\tilde X}$ is not effective. 
In this case we have $h^0(F, \calO_F(C))=4$ and  $h^1( F, \calO_{F}(C))=0$. 
Since $h^1(\tilde X, \calO_{\tilde X}(C))=1$ and $h^2(\tilde X, \calO_{\tilde X}(C))=0$, 
we have that $h^1(\tilde X, \calO_{\tilde X}(\Phi-K_{\tilde X}))=2$ 
and $h^2(\tilde X, \calO_{\tilde X}(\Phi-K_{\tilde X}))=0$, 
so that  $\chi(\calO_{\tilde X}(\Phi-K_{\tilde X}))=-2$. 
Hence we have
$$
-6=\Phi^2-3K_{\tilde X}\cdot \Phi+2K_{\tilde X}^2= \Phi^2-K_{\tilde X}\cdot \Phi-6,
$$
giving
\begin{equation}\label{eq:kip}
K_{\tilde X}\cdot \Phi= \Phi^2. 
\end{equation}
Since, as we saw above, $K_{\tilde X}\cdot \Phi<0$ is not possible, 
but if $\Phi \neq 0$ then $\Phi^2<0$; 
hence we have $\Phi=0$ and $K_{\tilde X}^2=-3$. 
Then we are in case (iii).

Finally, suppose that the general curve in $|F|$ is rational. 
Then, up to a Cremona transformation, 
we have that $|F|$ can be identified with the linear system of  lines in the plane, 
so that  $F^2=1$ and $K_{\tilde X}\cdot F=-3$. 
In this case we have
$$
1+K_{\tilde X}^2=K_{\tilde X}\cdot (C+K_{\tilde X})
=K_{\tilde X}\cdot F+K_{\tilde X}\cdot \Phi
=K_{\tilde X}\cdot \Phi-3, \quad {\rm i.e.,}\quad K_{\tilde X}\cdot \Phi=K_{\tilde X}^2+4,
$$
and $\chi(\calO_{\tilde X}(\Phi-K_{\tilde X}))=-3$. 
An easy computation as before shows that \eqref {eq:kip} still holds 
and as above this implies that $\Phi=0$. 
Then we are in case (iv). 
\end{proof}

\begin{remark}\label{rem:skip} 
At the beginning of the proof of Proposition \ref {prop:plop} 
we saw that $h^1(\tilde X, \calO_{\tilde X}(C))=1$. 
This implies that for the linear systems 
listed in the statement of Proposition \ref {prop:plop} 
the base points cannot be general, 
since otherwise the systems in question would have dimension $1$ rather than $2$. However there are points in special position 
so that these linear systems do have dimension $2$.  

Let us look more closely at case (i).  
In this case the map $\tilde X \to \bbP^2$ 
cannot be the finite triple cover map: $\tilde X$ is different from $X$.  
Suppose to the contrary that this is the triple cover map.  
Then by Enriques' Lemma (\cite{I}, page 984), 
we deduce that the geometric genus of the branch curve is $16$.
However if this is the triple cover map, 
then the branch curve has in general only cusps, 
and the number of cusps is computed by Enriques to be $24$ 
(see again \cite{I}, same page).  
However a curve of degree $10$ with $24$ cusps has genus $12$, not $16$.

We conclude that the branch curve must have further singularities besides cusps, 
and there must be some curve $D$ contracted by the linear system.
For any such irreducible curve, one concludes readily 
(using the restriction sequence to $D$) 
that $D$ has genus at most one.  
In fact we can construct such examples as follows.

Suppose the linear system $|9;3^8,2, 1^2|$ 
has the first $10$ base points on an irreducible cubic curve. 
Then the proper transform of the cubic on the blow--up 
has zero intersection with the system $|9;3^8,2, 1^2|$ 
and it splits off the system with one condition. 
The residual system is $|6;2^8,1^2|$ that is a pencil. 
This proves that in this case $|9;3^8,2, 1^2|$ has dimension $2$ as desired. 
The cubic is contracted to an elliptic singularity on $X$. 

Notice that in this case the linear system $|9;3^8,2, 1|$ has dimension $3$ 
and it determines a rational map of $\bbP^2$ to $\bbP^3$ 
whose image is a quartic with an isolated genus $1$ singularity 
corresponding to the contraction of the cubic curve. 
This is a so--called \emph{Noether quartic} (see \cite [p. 204]{Co}). 
The triple plane is determined by the projection of the Noether quartic to $\bbP^2$ 
from a smooth point on it. 

In case (ii) the situation is quite similar. 
With the same considerations as above, 
one sees that there must be some curve $D$ on $\tilde X$ 
contracted by the linear system. 
Again any such a curve has genus at most one.  
In fact we can construct such examples as follows.

Suppose the linear system $|7;3,2^9, 1|$ 
has the first $10$ base points on an irreducible cubic curve. 
Then the proper transform of the cubic on the blow--up 
has zero intersection with the system $|7;3,2^9, 1|$ 
and it splits off the system with one condition. 
The residual system is $|4;2,1^{10}|$ which is a pencil. 
This proves that in this case $|7;3,2^9, 1|$ has dimension $2$ as desired. 
The cubic is contracted to an elliptic singularity on $X$. 

Again in this case the linear system $|7;3,2^9|$ has dimension $3$ 
and it determines a rational map of $\bbP^2$ to $\bbP^3$ 
whose image is a quartic with an isolated genus $1$ singularity 
(a so--called \emph{tacnode}) 
corresponding to the contraction of the cubic curve. 
The triple plane is determined by the projection of this quartic to $\bbP^2$ 
from a smooth point on it. 

Also in case (iii) the situation is similar. 
With the same considerations as above, 
one sees that there must be some curve $D$ on $\tilde X$ 
contracted by the linear system. 
Again any such a curve has genus at most one.  
We can construct such examples as follows.

Suppose the linear system $|6;2^7, 1^5|$ 
has the first $11$ base points on an irreducible cubic curve. 
Then the proper transform of the cubic on the blow--up 
has zero intersection with the system $|6;2^7, 1^5|$ 
and it splits off the system with one condition. 
The residual system is $|3;1^8|$ which is a pencil. 
This proves that in this case $|6;2^7, 1^5|$ has dimension $2$ as desired. 
The cubic is contracted to an elliptic singularity on $X$. 

Again in this case the linear system $|6;2^7, 1^4|$ has dimension $3$ 
and it determines a rational map of $\bbP^2$ to $\bbP^3$ 
whose image is a quartic with an isolated genus $1$ singularity 
corresponding to the contraction of the cubic curve. 
The triple plane is determined by the projection of this quartic to $\bbP^2$ 
from a smooth point on it. 

A second example with genus zero may be obtained as follows.  
Take a nodal cubic curve $\bar D$, 
and choose $8$ additional simple points on it.
Fix six of these, and choose a general pencil of cubics through those six.
These will meet in three additional points.
Now take the linear system of sextics
which are double at the node of $\bar D$, also at the chosen six points,
and pass simply through the two additional points of $\bar D$ 
and the three further base points of the cubic pencil.
The proper transform $D$ of the cubic is contracted by the linear system, 
and is a rational curve of self-intersection $-3$ on $\tilde X$.

Similarly, another example with a contracted genus one curve may be given as follows, 
with the other case (iv).  
Suppose that the linear system $|4;1^{13}|$ 
has its first $12$ base points on a smooth cubic curve. 
Then the proper transform of the cubic on the blow--up 
has zero intersection with the system $|4;1^{13}|$ and it splits off the system with one condition. 
The residual system is $|1;0^{12},1|$ which is a pencil. 
This proves that in this case $|4;1^{13}|$ has dimension $2$ as desired.
It is immediate that this triple plane can be obtained by projecting to $\bbP^2$ 
a quartic with an isolated triple point from a smooth point on it.  

One more example is to take four points on a line, 
and $9$ points which are the intersection of two cubics.  
The system of quartics through these $13$ points give an example where the line is contracted.

We do see constructions without contracted curves in case (iv).  
Take a smooth quartic curve $C$ and a line $\ell$ meeting $C$ in four points.  
Choose one of these points, 
and consider another quartic $C'$ passing through the other three points.  
This quartic intersects $C$ in $13$ additional points,
and the linear system of quartics through these $13$ points 
gives an example of the case (iv) without having any contracted curves. 

In case (iv) of Proposition \ref {prop:plop}, 
where there is no contraction,
the Tschirnhausen vector bundle is $\Omega^1_{\bbP^2}(-1)$ 
(see \cite [p. 1158]{M}).  
Since this bundle is not split, 
it is not easy to give explicit equations for the structure constants of the triple cover map.
Since this bundle has $c_2=7$, the branch curve of degree $10$ 
has in general $21$ cusps (see \cite[Lemma 10.1]{M}).

\end{remark}

\begin{example}\label{ex:gul} 
It is enlightening to look at the two cases (iii) and (iv) in Proposition \ref {prop:plop} 
and at the content of Remark \ref {rem:skip} 
for an even more geometric point of view. 

First of all one can consider quintic surfaces $S$ 
with three non--planar double lines concurring at a triple point. 
These surfaces are rational, and it is proven in  \cite [\S 33]{Co} that 
they are the image in $\bbP^3$ of the plane blown up at $9$ points 
via a linear system of the type $|6;2^7, 1^3|$. 
Now take a general point $x$ on one of the double lines $r$ 
and make the projection of $S$ from $x$. 
This determines a triple plane that falls in case (i) of Proposition \ref {prop:plop}. 
This projection contracts $r$ to a point 
according to what we observed in Remark \ref {rem:skip}. 

One can also consider quintic surfaces $S$ 
with a rational normal cubic curve $\Gamma$ of double points. 
These surfaces are rational, and it is proven in  \cite [\S 33]{Co} that 
they are the image in $\bbP^3$ of the plane blown up at $11$ points 
via a linear system of the type $|4;1^{11}|$. 
Now take a general point $x\in \Gamma$ and make the projection of $S$ from $x$. 
This determines a triple plane that falls in case (iv) of Proposition \ref {prop:plop}. 
In this projection in general there are no contractions. 
However, the double curve $\Gamma$ can specialize to a conic plus a line $r$ 
and if we take $x\in r$ general and project down from $x$, 
the line $r$ will be contracted to a point. 
Again this agrees with the content of Remark \ref {rem:skip}. 
\end{example}

\section{Branch curve of degree $12$ and higher}\label{sec:12}

As we saw, if the branch curve has degree $10$ or lower, 
a triple plane has Kodaira dimension $-\infty$, 
unless the branch curve has degree $10$,
in which case it could be either properly elliptic 
or a $K3$ quartic surface blown up at a smooth point 
(and we are not sure that the elliptic case exists). 
If the branch curve has degree $12$ or higher, 
the Kodaira dimension can get higher as well, 
and it becomes difficult to come up with a classification. 
We give here some examples, in particular with branch curves of degree $12$.

\begin{example}\label{ex:gt} 
Consider a surface $X$ of general type with $p_g=K^2=3$ 
and with base point free canonical system $|K|$.  
If $f: X\longrightarrow \bbP^2$ is the canonical map, 
this is a triple plane and the Tschirnhausen vector bundle is 
$\calO_{\bbP^2}(-2)\oplus \calO_{\bbP^2}(-4)$ (see \cite [Cor. 10.4]{M}). 
The branch curve has degree $12$ and in general has $24$ cusps 
(see \cite [Lemma 10.1]{M}). 
These surfaces have been studied in \cite [Sect. 2]{Ho}.
\end{example} 

\begin{example}\label{ex:ell}
 If the Tschirnhausen vector bundle is $\calO_{\bbP^2}(-3)\oplus \calO_{\bbP^2}(-3)$  then we have a triple plane $\pi: X\longrightarrow \bbP^2$ 
which is an elliptic surface over $\bbP^1$ 
(the elliptic structure being given by the canonical map, so $p_g=2$, $K^2=0$) 
and the triple covering is defined by a linear system 
of genus $4$ trisections of the elliptic structure (see \cite [Table 10.5]{M}). 
The branch curve has degree $12$ and in general has $27$ cusps 
(see \cite [Lemma 10.1]{M}).
\end{example} 

\begin{example}\label{ex:ell2} 
Consider a sextic surface $S$ in $\bbP^3$ 
which is double along the edges of a tetrahedron and has no other singularities. 
Its normalization $S'$ is an Enriques surface. 
The projection of $S$ to a plane from a vertex of the tetrahedron 
determines a triple plane $\pi: X\longrightarrow \bbP^2$ 
where $X$ is the blow--up of $S'$ at the three points 
that are mapped to the vertex of the tetrahedron. 
The branch curve of this triple plane has degree $12$ 
because the curves $C\in \calL$ have genus $4$. 
Taking into account  \cite [Cor. 10.4]{M}, 
we think that the Tschirnahusen vector bundle does not split in this case. 
\end{example}

\begin{example}\label{ex:K3} 
Let $S$ be a smooth surface of degree $6$ in $\bbP^4$ 
that is the complete intersection of a quadric and a cubic hypersufaces. 
This is a K3 surface. 
There are proper trisecant lines to $S$, 
precisely all lines in the quadric and not contained in $S$. 
One can project $S$ down to $\bbP^2$ from any such proper trisecant line. 
This determines a triple plane that has a branch curve of degree $12$. 
The Tschirnausen bundle here is not split 
but it is somehow \emph{very close} to being split, 
i.e., it is a quotient of a split rank three vector bundle by a line bundle 
(see \cite [Table 1]{IPR}).

Of course one can consider special cases in which $S$ becomes singular, 
in particular it becomes rational (e.g., if $S$ acquires a triple point). 
\end{example}

%\begin{example}\label{ex:ell2} The exceptional examples give a plethora of cases in which the branch curve has degree multiple of $6$. In particular we have a triple plane with branch curve of degree $12$ with $18$ cusps as singularities. \end{example}  

\begin{example}\label{ex:ello}  
Infinitely many examples can be obtained by considering the exceptional examples 
(see Section \ref {ssec:exc}) 
and the triple planes with a pencil composed with the triple plane map 
(see Section \ref {ssec:comp}  and \eqref {eq:comp}).  
In particular, if in \eqref {eq:comp} we have that $\Gamma = \bbP^1$ 
and $f: \bbP^2\dasharrow \bbP^1$ is given by a pencil of curves of degree $d$, 
the branch curve consists of $4d$ curves in the pencil. 
For $d=3$ we have a branch curve of degree $12$ and $X$ is an elliptic surface. 
\end{example}

\section{Appendix: Triple Cover Formulae}\label{triplecoverformulae}
In this appendix we review how to obtain the structure parameters $a,b,c,d$ 
(in the notation of \cite{M})
from the coefficients $e_j$ of a monic cubic equation
\[
X^3 + e_2 X^2 + e_1 X + e_0 = 0
\;\;\;\text{ or }\;\;\; X^3 = -e_0 -e_1 X -e_2 X^2.
\]
The algebra is generated by $1,X,X^2$ as a module,
but the structure constants that we seek require us to find
the trace zero generators.
Note that 
\begin{align*}
	X^4 &= -e_0 X -e_1 X^2 - e_2 X^3 \\
	&= -e_0 X -e_1 X^2 -e_2(-e_0-e_1 X-e_2 X^2) \\
	&= e_0e_2 + (e_1e_2-e_0)X + (e_2^2-e_1)X^2   
\end{align*}
Since
\[
{\rm Tr}(X) = 
{\rm Tr}\begin{pmatrix} 0 & 0 & -e_0\\ 1 & 0 & -e_1 \\ 0 & 1 & -e_2 \end{pmatrix}
= -e_2
\]
and
\[
{\rm Tr}(X^2) = 
{\rm Tr}\begin{pmatrix} 0 & -e_0 & e_0e_2 \\ 0 & -e_1 & (e_1e_2-e_0) \\ 1 & -e_2 & (e_2^2-e_1) \end{pmatrix}
=e_2^2-2e_1
\]
we therefore have that
\[
z = X+e_2/3 \;\;\;\text{ and }\;\;\; w = X^2 -(e_2^2-2e_1)/3
\]
are the trace zero generators of the module.
Using the notation of \cite{M},
we seek the structure constants $a,b,c,d$ for multiplication in the rank three algebra
in terms of the generators $1$, $z$ and $w$, which are
\begin{align} \label{triplecoverstructureconstants}
	z^2 &= 2A + az+bw \nonumber\\
	zw &= -B -dz-aw\\
	w^2 &= 2C +cz+dw \nonumber
\end{align}
where
\[
A = a^2-bd,\;\;\; B=ad-bc,\;\;\; C=d^2-ac.
\]
We then compute these as:
\begin{align*}
	z^2 &= (X+e_2/3)^2 = X^2 + (2e_2/3)X+(e_2^2/9) \\
	&= (w+(e_2^2-2e_1)/3) + (2e_2/3)(z-e_2/3) +(e_2^2/9) \\
	&= (2e_2^2/9 -2e_1/3) + (2e_2/3) z + w, \\
	zw &= (X+e_2/3)(X^2 -(e_2^2-2e_1)/3) \\
	&=  X^3 + (e_2/3)X^2 -((e_2^2-2e_1)/3)X - (e_2^3-2e_1e_2)/9 \\
	&= -e_2X^2 - e_1X - e_0 + (e_2/3)X^2 -((e_2^2-2e_1)/3)X - (e_2^3-2e_1e_2)/9 \\
	&= (-2e_2/3)X^2 + ((-e_2^2-e_1)/3)X - e_0 - (e_2^3-2e_1e_2)/9 \\
	&= (-2e_2/3)(w+(e_2^2-2e_1)/3) + ((-e_2^2-e_1)/3)(z-e_2/3) - e_0 - (e_2^3-2e_1e_2)/9 \\
	&= (-e_0 -2e_2^3/9 +7e_1e_2/9) + ((-e_2^2-e_1)/3) z + (-2e_2/3) w \\
	w^2 &= (X^2 -(e_2^2-2e_1)/3)^2 \\
	&= X^4 -2(e_2^2-2e_1)/3) X^2 + (e_2^2-2e_1)^2/9 \\
	&= (e_2^2-e_1)X^2 + (e_1e_2-e_0)X + e_0e_2 
	-2(e_2^2-2e_1)/3) X^2 + (e_2^2-2e_1)^2/9 \\
	&= ((e_2^2+e_1)/3) X^2 + (e_1e_2-e_0)X + e_0e_2 + (e_2^2-2e_1)^2/9 \\
	&= ((e_2^2+e_1)/3)(w+(e_2^2-2e_1)/3) + (e_1e_2-e_0) (z-e_2/3) + e_0e_2 + (e_2^2-2e_1)^2/9 \\
	&= (2e_2^4-8e_1e_2^2+2e_1^2+12e_0e_2)/9 + (e_1e_2-e_0) z +  ((e_2^2+e_1)/3) w
\end{align*}
Therefore:
\begin{proposition}\label{TCstructure}
If a rank three algebra is defined by an equation
\[
X^3 + e_2 X^2 + e_1 X + e_0 = 0,
\]
then the structure constants for the algebra
using trace zero generators $z = X+e_2/3$ and $w = X^2 -(e_2^2-2e_1)/3$ 
as in \eqref{triplecoverstructureconstants}
are given by
\begin{align*}
	a &= 2e_2/3, \;\;\; b = 1, \;\;\; c = e_1e_2-e_0, \;\;\; d = (e_2^2+e_1)/3 \\
	A &= e_2^2/9 -e_1/3 \\
	B &= e_0 + 2e_2^3/9 - 7e_1e_2/9 \\
	C &= (e_2^4 - 4e_1e_2^2+ e_1^2+ 6e_0e_2)/9
\end{align*}
using the notation of \cite{M}.
\end{proposition}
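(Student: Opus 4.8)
The plan is to prove the proposition by a direct computation in the rank three algebra: I express each of the products $z^2$, $zw$, and $w^2$ in the basis $1, z, w$, and then read off the structure constants by comparison with the defining relations \eqref{triplecoverstructureconstants}. First I would record the two facts the computation rests on: the defining relation $X^3 = -e_0 - e_1 X - e_2 X^2$, and its consequence $X^4 = e_0 e_2 + (e_1 e_2 - e_0) X + (e_2^2 - e_1) X^2$, obtained by multiplying the former by $X$ and substituting once more for $X^3$. Combined with the definitions $z = X + e_2/3$ and $w = X^2 - (e_2^2 - 2e_1)/3$, these give the inverse expressions $X = z - e_2/3$ and $X^2 = w + (e_2^2 - 2e_1)/3$, so that any polynomial in $X$ of degree at most two can be rewritten in the basis $1, z, w$.

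Next I would compute the three products in turn. For $z^2 = (X + e_2/3)^2$ I expand and substitute for $X^2$ and $X$; for $zw = (X+e_2/3)\bigl(X^2 - (e_2^2-2e_1)/3\bigr)$ I expand, use the defining relation to eliminate the resulting $X^3$ term, and then rewrite in the basis; for $w^2 = \bigl(X^2 - (e_2^2-2e_1)/3\bigr)^2$ I expand, use the $X^4$ identity, and again rewrite. These are precisely the three displayed computations carried out above. Matching, in each product, the coefficient of $z$, the coefficient of $w$, and the constant term against the right-hand sides of \eqref{triplecoverstructureconstants} then yields $a = 2e_2/3$, $b = 1$, and $2A = 2e_2^2/9 - 2e_1/3$ from $z^2$; the values $d = (e_2^2+e_1)/3$, the (consistent) value $a = 2e_2/3$, and $-B$ from $zw$; and $c = e_1 e_2 - e_0$, the (consistent) value $d = (e_2^2+e_1)/3$, and $2C$ from $w^2$. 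Solving for $A$, $B$, $C$ produces the stated closed forms.

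The computation is entirely routine, so there is no genuine obstacle here; the only care required is the bookkeeping of the denominators $3$ and $9$ and the sign convention in the middle relation $zw = -B - dz - aw$. The fact that $a$ and $d$ each emerge twice, with matching values, is a built-in internal check that the trace-zero normalization has been applied correctly. As a final consistency verification I would confirm the identities $A = a^2 - bd$, $B = ad - bc$, and $C = d^2 - ac$ asserted in \eqref{triplecoverstructureconstants}: each reduces to a short polynomial identity in $e_0, e_1, e_2$, and their agreement with the closed forms for $A$, $B$, $C$ obtained above certifies both the coherence of the construction of \cite{M} and the correctness of the extracted constants.
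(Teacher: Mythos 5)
Your proposal is correct and follows essentially the same route as the paper's appendix: expand $z^2$, $zw$, and $w^2$, eliminate $X^3$ and $X^4$ via the defining relation, rewrite in the basis $1,z,w$ using $X = z - e_2/3$ and $X^2 = w + (e_2^2-2e_1)/3$, and match coefficients against \eqref{triplecoverstructureconstants}, which yields exactly the stated values (with the repeated appearances of $a$ and $d$ as internal checks). The only cosmetic difference is that you propose to carry out the final verification of $A = a^2-bd$, $B = ad-bc$, $C = d^2-ac$, which the paper merely notes is ``elementary but onerous'' and leaves to the reader.
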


It is elementary but onerous to check that we have the necessary relations among these seven quantities ($A = a^2-bd$, $B=ad-bc$, and $C=d^2-ac$).


\begin{thebibliography}{}

\bibitem {B1} J. Bronowski, \emph{On triple planes. I}, J. London Math. Soc., {\bf 12}, (1937), 212--216.

\bibitem {B2} J. Bronowski, \emph{On triple planes. II}, J. London Math. Soc., {\bf 17}, (1942), 24--31.

\bibitem {B3} J. Bronowski, \emph{On triple planes. III}, J. London Math. Soc., {\bf 17}, (1942), 80--87.

\bibitem {C} A. Calabri, \emph{Rivestimenti del piano, Sulla razionalit\`a dei piani doppi e tripli ciclici}, Centro Studi Enriques Ed. Plus, Pisa University Press, 2006.

\bibitem {CaCi} A. Calabri, C. Ciliberto, \emph{Birational classification of curves on rational surfaces}, Nagoya Math. J., {\bf 199}, (2010), 43--93.

\bibitem {CC} F. Catanese, C. Ciliberto, \emph {Symmetric products of elliptic curves and surfaces of general type with $p_g = q = 1$}, J. of Alg. Geom., {\bf 2} (1993), 389--411.

\bibitem {CCML} F. Catanese, C. Ciliberto, M. Mendes Lopes, \emph{On the classification of irregular surfaces of general type with non birational bicanonical map}, Transactions in Math. {\bf 350} (1) (1998), 275--308 

\bibitem {C} C. Ciliberto, \emph{Classification of Complex Algebraic Surfaces},  Series of Lectures in Math., EMS,  2020. 

\bibitem {CFM} 
C. Ciliberto, P. Francia, M. Mendes Lopes, 
{\em Remarks on the bicanonical map for surfaces of general type}, 
Math. Z., {\bf 224} (1997), 137--166.

\bibitem {CE} G. Casnati, T. Ekedahl, \emph{Covers of algebraic varieties. I. A general structure theorem, covers of degree 3, 4 and Enriques surfaces}, J. Algebraic Geom., {\bf 5}, (1996), no. 3, 439--460.

\bibitem {Co} F. Conforto, \emph{Le superficie razionali}, Zanichelli, Bologna, 1939.

\bibitem {Du1} P. d. Val, \emph{ On triple planes having branch curves of order not greater than twelve}, J. London Math. Soc., {\bf 8} (1933), no. 3, 199--206. 

\bibitem {Du2}  P. du Val, \emph{ On triple planes whose branch curves are of order fourteen}, Proc. London Math. Soc. (2) {\bf 39} (1935), no. 1, 68--81. 

\bibitem {FPV} D. Faenzi, F. Polizzi, J Vall\`es, \emph{Triple planes with $p_g= q= 0$},  Transactions of the American Mathematical Society, {\bf 371}.1, (2019), 589--639.

\bibitem {F} A. Franchetta, \emph{Sulle curve riducibili appartenenti ad una superficie algebrica}, Rend. di mat. e appl., Ser. V, {\bf 8}, 3-4, (1949), 1-21.

\bibitem {har} R. Hartshorne, \emph{Algebraic Geometry}, Graduate Texts in Math., Springer Verlag, 1977.

\bibitem {Ho}  E. Horikawa, \emph{Algebraic Surfaces of General Type With Small $c^2_1$, II}, Invent. Math., {\bf 37}, (1976), 121--155.

\bibitem {IPR} N. Istrati, P. Pokora, S. Rollenske, \emph{Special triple covers of algebraic surfaces}, Documenta Math., {\bf 27}, (2022), 2301--2332.

\bibitem{I} B. Iversen, \emph{Numerical Invariants and Multiple Planes},
American Journal of Mathematics, {\bf 92}, No. 4 (Oct., 1970), 968--996.

\bibitem  {ML} M.~Mendes Lopes, 
{\em Adjoint systems on surfaces}, 
Boll. Un. Mat. Ital., A (7)  {\bf 10}  (1996),  no. 1, 169--179.

\bibitem {M} R. Miranda, \emph{Triple Covers in Algebraic Geometry}, American Journal of Mathematics, {\bf 107} (5),  (1985), 
1123--1158.

\bibitem {P0} G. Pompilj, \emph{Sulla rappresentazione algebrica dei piani tripli}, Rend. Sem. Mat. Roma, (4) {\bf 3}, (1939), 109--132. 

\bibitem {P01} G. Pompilj, \emph{Osservazioni sui piani tripli}, Ist. Lombardo Sci. Lett. Cl. Sci. Mat. Nat. Rend. (3) {\bf 5} (74), (1941), 263--279. 

\bibitem {P1} G. Pompilj, \emph{Sui piani tripli con quartica di diramazione}, Ann. di Mat. Pura e Appl.,  {\bf 24} (4), (1945), 65--117.

\bibitem {P} G. Pompilj, \emph{Sui piani tripli con un fascio irrazionale}, Rend. della R. Acc. Nazionale dei Lincei, {\bf 1} (8), (1946), 306--313.

\bibitem {P2} G. Pompilj, \emph{  Sui piani tripli birazionalmente identici}, Atti Accad. Naz. Lincei. Rend. Cl. Sci. Fis. Mat. Nat. (8) {\bf 1} (1946), 318--322.

\bibitem {T} S.-L. Tan, \emph{Triple covers on smooth algebraic varieties},  AMS IP Studieds in Advanced Math., {\bf 29} (2002), 143--164.

\end{thebibliography}
\end{document}